\newcommand{\dom}{{\rm{Dom\,}}}
\newcommand{\R}{\mathbb{R}}
\newcommand{\N}{\mathbb{N}}
\newcommand{\NN}{\mathcal{N}}
\newcommand{\supp}{\text{\rm supp}}
\newcommand{\loc}{\text{\rm loc}}
\newcommand{\Lip}{\mathrm{Lip}}
\newcommand{\diam}{{\rm{diam\,}}}
\newcommand{\ve}{\varepsilon}
\newcommand{\erre}{\mathbb{R}}
\newcommand{\T}{\mathcal{T}}
\renewcommand{\L}{\mathcal{L}}
\newcommand{\RCD}{\mathsf{RCD}}
\newcommand{\CD}{\mathsf{CD}}
\newcommand{\Geo}{{\rm Geo}}
\newcommand{\MCP}{\mathsf{MCP}}
\newcommand{\OptGeo}{{\rm OptGeo}}
\newcommand{\Ent}{{\rm Ent}}
\newcommand{\Ric}{{\rm Ric}}
\newcommand{\norm}[1]{\left\Vert#1\right\Vert}
\newcommand{\abs}[1]{\left\vert#1\right\vert}
\newcommand{\Real}{\mathbb{R}}
\renewcommand{\L}{\mathcal{L}}
\newcommand{\m}{\mathfrak{m}}
\newcommand{\vol}{\mathrm{Vol}}
\renewcommand{\P}{\mathbb P}
\renewcommand{\P}{\mathcal{P}}
\renewcommand{\H}{\mathcal{H}}
\newcommand{\mm}{\mathfrak m}
\newcommand{\qq}{\mathfrak q}
\newcommand{\ee}{{\rm e}}
\newcommand{\QQ}{\mathfrak Q}
\newcommand{\sfd}{\mathsf d}
\newcommand{\Ch}{\mathsf{Ch}}
\newcommand{\Per}{\mathsf{Per}}
\newcommand{\Opt}{\mathrm{OptGeo}}
\newcommand{\LIP}{\mathrm{LIP}}
\newcommand{\bd}{{\mathbf\Delta}}
\theoremstyle{plain}
\newtheorem{lemma}{Lemma}[section]
\newtheorem{theorem}[lemma]{Theorem}
\newtheorem{proposition}[lemma]{Proposition}
\newtheorem{corollary}[lemma]{Corollary}
\newtheorem*{theorem*}{Theorem}
\newtheorem*{maintheorem*}{Main Theorem}
\theoremstyle{definition}
\newtheorem{definition}[lemma]{Definition}
\newtheorem*{definition*}{Definition}
\newtheorem*{remark*}{Remark}
\newtheorem{remark}[lemma]{Remark}
\numberwithin{equation}{section}
\title{Indeterminacy estimates \\ and the size of nodal sets 
in singular spaces}
\begin{document}

\author{
Fabio Cavalletti\thanks{Mathematics Area, SISSA, Trieste (Italy) {\tt cavallet@sissa.it, sfarinel@sissa.it}}\, and 	
Sara Farinelli$^{*}$
}
	
\maketitle

\begin{abstract}
We obtain the sharp version 
of the uncertainty principle recently introduced in \cite{SagivSteinerberger}, and improved by \cite{CMO20}, 
relating the size of the zero set of a continuous function having zero mean 
and the optimal transport cost between the mass of the positive part and the negative one.
The result is actually valid for the wide family of metric measure spaces verifying 
a synthetic lower bound on the Ricci curvature, namely the $\MCP(K,N)$ or $\CD(K,N)$ condition, thus also extending the scope beyond the smooth setting of Riemannian manifolds.

Applying the uncertainty principle to eigenfunctions of the Laplacian in possibly non-smooth spaces,
 we obtain new lower bounds on the size of their nodal sets in terms of the eigenvalues. 
Those cases where the Laplacian is possibly non-linear are also covered 
and applications to linear combinations of eigenfunctions of the Laplacian are derived. 
To the best of our knowledge, no previous results were known for non-smooth spaces.  
\end{abstract}

\tableofcontents

%%%%%%%%%%%%%%%%%%%%%%%%%%%%%%%%%%%%%%%%%%%%%%%%%%%%%%%%%%%%%%%%%%%%%%%%%%%%%%%%%%%%%%%%%%%%%%%%%%%%%%%%%%%%%%
%%%%%%%%%%%%%%%%%%%%%%%%%%%%%%%%%%%%%%%%%%%%%%%%%%%%%%%%%%%%%%%%%%%%%%%%%%%%%%%%%%%%%%%%%%%%%%%%%%%%%%%%%%%%%%
\section{Introduction}

This paper is motivated by the recent emerging interest 
on uncertainty estimates and their applications to the behaviour 
of solutions of certain elliptic equations.

To be more precise: 
given a continuous function 
$f : \Omega \subset \R^{n} \to \R$ with zero mean 
$\int_{\Omega} f = 0$, with $\Omega$ compact, 
it is natural to interpret $f^{+}$, the positive part of $f$, 
and  $f^{-}$, the negative part of $f$, 
as two distributions of mass one can compare evaluating their Wasserstein distance $W_{1}$ (even though they do not have total mass 1).
Then, if it is cheap to transport $f^{+}dx$ to $f^{-}dx$ (meaning their 
 Wasserstein distance is small), 
necessarily most of the mass of $f^{+}$ must 
be close  to most of the mass of $f^{-}$. 
Continuity of $f$ implies then that necessarily  
the nodal set $\{x \in \Omega \colon f(x) = 0\}$ has to be large. 
Uncertainty estimates will quantify this relation.

This question was firstly investigated 
by Steinerberger in dimension 2 \cite{Steinerberger18}  and 
later in any dimension by Sagiv and Steinerberger \cite{SagivSteinerberger} proving that 
 any continuous function $f: [0,1]^{n} \to \R$ having zero mean
satisfies the following inequality
\begin{equation}\label{E:Steinerberger}
W_{1}(f^+\,dx,f^-\,dx)\cdot \H^{n-1}\left(\{ x\in X\colon f(x)=0\}\right)
\geq C \left(\frac{\norm{f}_{L^{1}}}{\|f \|_{L^{\infty}}}\right)^{4 - \frac{1}{n}}\| f \|_{L^{1}}.
\end{equation}
The constant $C$ depends only on $n$ and 
$\H^{n-1}$ denotes the Hausdorff measure of dimension $n-1$. 
Subsequently, an improvement on  \eqref{E:Steinerberger} 
has been obtained in \cite{CMO20} by Carroll, Massaneda and  Ortega-Cerd\`a showing the validity of 
\eqref{E:Steinerberger} with the better exponent $2-1/n$ and 
extending the range of applicability to continuous functions 
defined on any smooth and compact Riemannian manifold.

\smallskip
The first main result of this note is the following \emph{sharp} (in the exponent)
uncertainty principle valid for real valued, continuous or Sobolev functions defined over a metric measure spaces (m.m.s. for short) 
$(X,\sfd,\mm)$ verifying synthetic (meaning not requiring any smoothness assumption on $X$) 
lower bound on the Ricci curvature 
called Curvature-Dimension condition and denoted by $\CD(K,N)$, with $K$ mimiking the lower bound on 
the Ricci curvature and $N$ the upper bound on the dimension.
With the terminology metric measure space we intend 
a complete and separable metric space 
$(X,\sfd)$ endowed with a non-negative Radon measure $\mm$
(all the other terminology and notations will be introduced in Section \ref{S:preliminaries}).

\begin{theorem}[Sharp indeterminacy estimate]\label{T:main1}
Let $K,N \in \R$ with $N > 1$.
Let $(X,\sfd,\mm)$ be an essentially non-branching metric measure 
space verifying $\CD(K,N)$. 
Let $f \in L^{1}(X,\mm)$ be a continuous function, or alternatively $f \in W^{1,2}(X,\sfd,\mm)$, 
such that $\int_{X} f\, \mm = 0$ and assume the existence of $x_{0} \in X$ such that  $\int_{X} | f(x) |\,  \sfd(x,x_{0})\, \mm(dx)< \infty$. 

Then the following indeterminacy estimate is valid:
\begin{equation}\label{E:indeterminacyCDintro}
W_{1}(f^+\m,f^-\m)\cdot \Per\left(\{ x\in X\colon f(x)>0\}\right)
\geq \frac{\norm{f}_{L^1(X,\mm)}}{\|f \|_{L^{\infty}(X,\mm)}} 
\frac{\norm{f}_{L^1(X,\mm)}}{8C_{K,D}},
\end{equation}
where $D = \diam(X)$ and
$$
C_{K,D}: = 
\begin{cases} 
1 & K \geq 0, \\ e^{-K D^{2}/2} & K < 0.
\end{cases}
$$
\end{theorem}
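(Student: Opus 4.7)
I would reduce the statement to a one-dimensional indeterminacy inequality through the 1D localization (needle decomposition) of the $\CD(K,N)$ space along the transport rays of a Kantorovich potential for $(f^+\m, f^-\m)$, and then use the $\CD(K,N)$ density bounds on each needle to prove the sharp 1D inequality, before reassembling everything by an integral Cauchy--Schwarz.

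\textbf{Step 1 (localization aligned to the $W_1$ transport).} I would fix a 1-Lipschitz Kantorovich potential $u:X\to\R$ for the $L^1$-optimal transport from $f^+\m$ to $f^-\m$. Applying the $L^1$-localization theorem of Klartag and Cavalletti--Mondino to $u$, a conegligible transport set decomposes into maximal transport rays $\{X_q\}_{q\in Q}$ along which $\m$ disintegrates as $\m=\int_Q\m_q\,d\q(q)$, with each $\m_q=h_q\,\haus^1$ supported on $X_q$ and $h_q$ a $\CD(K,N)$-density in one variable. Two standard byproducts of this decomposition are that every ray carries balanced data, $\int f\,d\m_q=0$ for $\q$-a.e.\ $q$, and that the Wasserstein cost localizes:
\begin{equation*}
W_1(f^+\m,f^-\m)=\int_Q W_1(f^+\m_q,f^-\m_q)\,d\q(q).
\end{equation*}

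\textbf{Step 2 (perimeter disintegration).} Since $f$ is continuous, $\{f>0\}\cap X_q$ is a countable union of open 1D intervals, and its boundary in $X_q$ is a discrete set of sign-change points; let $\Per_q:=\sum_c h_q(c)$ denote the corresponding weighted 1D perimeter. A coarea-type argument (or equivalently the definition of $\Per$ via the relaxation of the Cheeger energy and the chain rule on needles) yields
\begin{equation*}
\Per(\{f>0\})\geq \int_Q \Per_q\,d\q(q).
\end{equation*}

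\textbf{Step 3 (sharp 1D estimate).} I would then prove the core 1D inequality: on an interval of diameter at most $D$, for every $\CD(K,N)$-density $h$ and every continuous $g$ with $\int g h\,dt=0$,
\begin{equation*}
W_1(g^+h\,dt,g^-h\,dt)\cdot \Per_h\{g>0\}\;\geq\;\frac{\|g\|_{L^1(h\,dt)}^2}{8\,C_{K,D}\,\|g\|_\infty}.
\end{equation*}
Setting $G(t)=\int_a^t g h\,ds$, one has $G(a)=G(b)=0$, local extrema of $G$ precisely at the sign-change points of $g$, and the 1D CDF identity $W_1=\int|G|\,dt$. The uniform bound $|G(c)-G(t)|\leq \|g\|_\infty\,|\int_t^c h\,ds|$ forces $|G|$ to stay near $|G(c)|$ on a neighborhood of each sign-change $c$ whose $h$-mass is prescribed; the $\CD(K,N)$-density comparison $h(s)\leq C_{K,D}\,h(c)$ then converts this $h$-mass estimate into a lower bound on the Euclidean length, producing the factor $e^{-KD^2/2}$ when $K<0$. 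A careful bookkeeping over all sign-changes yields the sharp constant~$8$.

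\textbf{Step 4 (assembly).} Combining Steps 1--3 via Cauchy--Schwarz in the $q$ variable,
\begin{equation*}
W_1\cdot\Per \;\geq\; \Bigl(\int_Q W_{1,q}\,d\q\Bigr)\Bigl(\int_Q \Per_q\,d\q\Bigr) \;\geq\; \Bigl(\int_Q \sqrt{W_{1,q}\,\Per_q}\,d\q\Bigr)^2 \;\geq\; \frac{\bigl(\int_Q \|f\|_{L^1(\m_q)}\,d\q\bigr)^2}{8\,C_{K,D}\,\|f\|_\infty},
\end{equation*}
and since $\int_Q\|f\|_{L^1(\m_q)}\,d\q=\|f\|_{L^1(\m)}$, the conclusion follows.

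\textbf{Main obstacle.} The delicate point is Step 3: it is there that the curvature enters, producing $C_{K,D}$ through sharp pointwise control on 1D $\CD(K,N)$-densities of prescribed diameter, and getting the sharp multiplicative constant $8$ requires a careful treatment of the mass distributed across all sign-changes (a subtlety absent in the simplest case of a single sign-change). For the Sobolev version, where pointwise values of $f$ are not meaningful, I would work with a precise representative or approximate $f$ in $W^{1,2}$ by continuous functions and pass to the limit using lower semicontinuity of $\Per$ and $W_1$, reducing to the continuous case.
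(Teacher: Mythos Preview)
Your overall architecture---localization along the $W_1$-transport rays, a one-dimensional indeterminacy inequality on each needle, perimeter disintegration, and reassembly via Cauchy--Schwarz in $q$---is exactly the paper's. Two points deviate, and each carries a real issue.

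\textbf{The 1D estimate.} The paper does not use the CDF identity. It argues by \emph{rearrangement}: for a single sign-change at $\bar x$ one replaces the two masses by top-hat profiles abutting $\bar x$, checks that this only decreases $W_1$, and computes the resulting cost explicitly, obtaining $W_1\ge \tfrac12\|fh\|_{L^1}^2/\min\{\|f^+h\|_\infty,\|f^-h\|_\infty\}$. The curvature then enters through the \emph{one-sided} bound
\[
\min\bigl\{\|h\|_{L^\infty[0,\bar x]},\,\|h\|_{L^\infty[\bar x,D]}\bigr\}\le C_{K,D}\,h(\bar x),
\]
which follows from concavity of $x\mapsto \log h(x)+\tfrac{K}{2}(x-\bar x)^2$. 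Your claimed comparison $h(s)\le C_{K,D}\,h(c)$ is false as a two-sided statement (for $K\ge 0$ it would force $h$ to have a global maximum at every sign-change); only one side of $c$ is controlled, and which side depends on $c$. Your CDF route via $G$ and $W_1=\int|G|$ is a legitimate alternative in principle, but as written it does not explain how to handle the uncontrolled side of each extremum, how to prevent the neighborhoods of different extrema from overlapping, or how to land on the constant $8$; the paper's rearrangement together with a second Cauchy--Schwarz over the connected components of $\{f>0\}$ sidesteps all of this cleanly.

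\textbf{The Sobolev case.} Approximating $f$ in $W^{1,2}$ by continuous $f_n$ and invoking semicontinuity does not close the argument: $\Per(\{f_n>0\})$ and $W_1(f_n^+\m,f_n^-\m)$ need not converge along a generic approximation, and lower semicontinuity of $\Per$ points the wrong way for a lower bound on the product. The paper instead uses that the quotient measure $\qq$ can be identified with a test plan, so for $\qq$-a.e.\ ray the restriction $f|_{X_\alpha}$ lies in the one-dimensional Sobolev space on the ray and therefore has a continuous representative; the continuous 1D argument is then run directly on each needle, with no approximation at the level of $X$.
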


The essentially non-branching assumption in Theorem \ref{T:main1}
is to prevent branch-like behaviour of geodesics and it is trivially satisfied by Riemannian manifolds and verified by the more regular class of $\RCD$ spaces. 
The notation $\Per(A)$ is used to denote the Perimeter of the set $A$ (see Section \ref{S:preliminaries} for its definition in this abstract setting). 
In the smooth setting, i.e. an $n$-dimensional Riemannian manifold endowed with the volume measure, it coincides thanks to  De Giorgi's Theorem with the $\H^{n-1}$-measure 
of the reduced boundary of $A$ (same result has been recently extended to the setting of non-collapsed $\RCD(K,N)$ spaces, see \cite{AmbrosioBrueSemola}).

We will now list few detailed comments on Theorem \ref{T:main1}. 

\smallskip
\noindent
\emph{Setting and Sharpness:}  
\begin{itemize}
\item[-] We improve on previous results
by including possibly non-smooth spaces, 
i.e. those spaces verifying the synthetic lower bound on the Ricci curvature, $\CD(K,N)$ condition, see 
Section \ref{subsec:cdbounds} for the precise definition and 
for a list of class of spaces falling within this theory.  
Here we mention that given a complete Riemannian manifold 
$(M,g)$ one can naturally consider the m.m.s. $(M,\sfd_{g}, \vol_{g})$ where 
where $\sfd_{g}$ is the geodesic distance and $\vol_{g}$ the volume measure
both induced by the metric $g$. 
Then $(M,\sfd_{g}, \vol_{g})$ verifies $\CD(K,N)$ 
if and only if $\Ric_{g} \geq K g$ and $\dim(M) \leq N$.
In particular \emph{any compact smooth weighted} (meaning with 
$\mm = e^{-V}\, \vol_{g}$ with $V$ smooth) \emph{Riemannian manifold} is included in our setting.  
Thanks to the well-known stability property of the $\CD(K,N)$ conditions 
in the measured-Gromov-Hausdorff sense, 
Theorem \ref{T:main1} applies also to any possible limit space 
of sequences of manifolds having $\Ric$ bounded from below uniformly 
and dimension bounded from above uniformly.

\item[-] The estimate does not require the space $X$ to be compact nor 
the reference measure $\mm$ to be finite, at least when $K \geq 0$. If $K <0$, then to have a meaningful estimate necessarily the diameter of $X$ has to be finite.  

\item[-] Inequality \eqref{E:indeterminacyCDintro} does not depend on the dimension. In particular the same statement is valid 
for m.m.s. satisfying $\CD(K,\infty)$ (i.e. no synthetic upper bound on the dimension) for which a localization paradigm is at disposal. In particular if $(X,\sfd,\mm)$ satisfies 
$\CD(K,\infty)$ and 
the weaker $\MCP(K',N')$ with some other $K',N'$, then 
\eqref{E:indeterminacyCDintro} is still valid. 
Referring to Theorem \ref{T:multidim} for the precise statement, 
here we underline that for any continuous function $f : \R^{n} \to \Real$ 
having zero mean and satisfying the growth assumptions 
with respect to $e^{-V} dx$ for some smooth convex function $V$, then 
\eqref{E:indeterminacyCDintro} holds true.

\item[-] As pointed out 
in \cite{CMO20} by Carroll, Massaneda and  Ortega-Cerd\`a, 
their version of \eqref{E:Steinerberger} cannot be improved 
by lowering the exponent $2-1/n$ below $1$; 
exponent 1 was known to be reached only in dimension 2. 
Hence the exponent 1 in \eqref{E:indeterminacyCDintro} is \emph{sharp}.

\item[-] Theorem \ref{T:main1} will be also valid for spaces 
verifying another synthetic curvature notion called  measure-contraction property $\MCP(K,N)$ (see Section \ref{subsec:cdbounds}). 
A long list of subRiemanninan spaces, including the Heinseberg group,  verifies this latter condition while failing $\CD(K,N)$.
In this framework the constant in the inequality \eqref{E:indeterminacyCDintro} will depend on the dimension.  
See Theorem \ref{T:multidimMCP} for the precise statement.
\end{itemize}

Our approach to obtain to Theorem \ref{T:main1} will be via a dimensional-reduction argument. 
In particular, the $L^{1}$-optimal transport problem between the positive and the 
negative part of $f$ gives, as a byproduct, a foliation of the ambient space $X$ into 
a family of geodesics obtained by considering the integral curves of the gradient of a 
Kantorovich potential $u$, i.e. a solution of the dual problem. 
This non-smooth foliation has few pleasant properties that are summarised in Theorem \ref{T:localization} (see Section \ref{S:preliminaries} for all preliminaries).
Here we mention that the integral of the function $f$ along almost every geodesics of the foliation is still zero, where the integral is with respect to the corresponding marginal measure. 
Also the curvature properties of the space are inherited by the one-dimensional ``weighted'' 
geodesic in a suitable sense. These two properties permit to reduce the 
proof of Theorem  \ref{T:main1} to a one-dimensional analysis (Section \ref{S:onedimensional}) and, in turn, to obtain the sharpness.

%------------------------------------------------------------
%------------------------------------------------------------
\subsection{Applications: Nodal set of eigenfunctions in singular spaces}
Our main application of Theorem \ref{T:main1}  will be a lower bound on the size of nodal sets for eigenfunctions of the Laplacian
(and linear combination of them) in possibly singular spaces verifying synthetic Ricci curvature bounds.

The whole list of topics related to the geometry 
of Laplace eigenfunctions (for instance the Courant nodal domain theorem or the quasi-symmetry conjecture) goes beyond the scope of this short introduction. 
However, to put the problem into perspective,
we will now recall  the long series of contributions to Yau's conjecture and 
the solution to it.  \\
Yau conjectured in \cite{Yau} that for any $n$-dimensional 
$C^{\infty}$-smooth closed Riemannian manifold $M$, hence without boundary and compact, any Laplace eigenfunction 
$$
-\Delta f_{\lambda} = \lambda f_{\lambda}
$$
satisfies 
$$
c\sqrt{\lambda} \leq \H^{n-1}(\{ f_{\lambda} = 0 \}) \leq C \sqrt{\lambda},
$$
with $c,C$ depend solely on $M$ and not on $\lambda$.

First Br\"uning \cite{Bruning} proved the validity 
of the lower bound for $n = 2$.
Then Donnelly and Fefferman in 1988 \cite{DonnellyFefferman88}
established Yau's conjecture in the case of real analytic metrics 
(for instance spherical harmonics). In the case 
of smooth manifold Nadirashvili in 1988 \cite{Nadirashvili88}
proved for  $n = 2$  that 
$\H^{1}(\{ f_{\lambda} = 0 \}) \leq C \lambda\log \lambda$
and later improved  \cite{DonnellyFefferman90,Dong92}
to $\H^{1}(\{ f_{\lambda} = 0 \}) \leq C \lambda^{3/4}$.
For general $n > 2$, Hardt and Simon \cite{HardtSimon} 
obtained the non-polynomial bound 
$\H^{n-1}(\{ f_{\lambda} = 0 \}) \leq C \lambda^{C\sqrt{\lambda}}$.

Few years later the lower bound has been improved to 
\begin{equation}\label{E:ColdingMinicozzi}
\H^{n-1}(\{ f_{\lambda} = 0 \}) \geq c \lambda^{\frac{3-n}{4}},
\end{equation}
in independent contributions by Colding and Minicozzi \cite{ColdingMinicozzi}, 
Sogge and Zelditch \cite{SoggeZelditch2011,SoggeZelditch2012} and by 
Steinerberger  \cite{Steinerberger14}.
Finally, a breakthrough has been obtained by Logunov in 2018, 
proving, in the smooth case and for any $n\in \N$, 
a polynomial upper bound 
\cite{Logunov1} and the lower bound \cite{Logunov2} in Yau's conjecture. For an overview on all these result we refer to 
\cite{LogunovMalinnikova}.

\smallskip
To the best of our knowledge there are no results 
on the size of  nodal sets of eigenfunctions of the Laplacian
whenever a singularity on the ambient manifold is allowed.
Following  Steinerberger \cite{Steinerberger18},
upper bounds on the $W_{1}$-distance between the positive and the negative parts of a Laplace eigenfunctions will yield
 lower bounds on the size of their nodal sets. 
This indeed is the content of the following results.
%where, using our sharp uncertainty principle valid for possibly singular spaces, we obtain estimates on the nodal sets in 
%singular settings. 
%
The first one will be for spaces verifying $\CD(K,N)$;
at this level of generality the Laplacian may not be even 
a linear operator (see Section \ref{Ss:Laplacian}).

\begin{theorem}[Nodal sets on $\CD$-spaces]\label{T:main3}
Let $K,N \in \R$ with $N > 1$.
Let $(X,\sfd,\mm)$ be an essentially non-branching  m.m.s. 
 verifying  $\CD(K,N)$ and such that $\mm(X) < \infty$. 
Let $f_{\lambda}$ be an eigenfunction of the Laplacian of eigenvalue 
$\lambda> 0$ (see Definition \ref{D:L2Laplace}) 
and assume moreover  the existence of $x_{0} \in X$ such that $\int_{X} | f_{\lambda}(x) |\,  \sfd(x,x_{0})\, \mm(dx)< \infty$.  

Then the following estimate on the size of the its nodal set holds true:
\begin{equation*}
\Per(\{x\in X : f_{\lambda}(x)>0 \})\geq   
\frac{\sqrt{\lambda}}{8C_{K,D}\sqrt{\mm(X)}}  
\cdot \frac{\|f_{\lambda}\|^2_{L^1(X,\mm)}}{\|f_{\lambda}\|_{L^{2}(X,\mm)}\|f_{\lambda}\|_{L^{\infty}(X,\mm)}},
\end{equation*}
where $D = \diam(X)$ and $C_{K,D}$ is the same of Theorem \ref{T:main1}.
\end{theorem}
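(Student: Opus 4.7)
The plan is to pair Theorem~\ref{T:main1} with an upper bound on the transport cost obtained from the heat semigroup. Since $\lambda > 0$ and $\mm(X)<\infty$, testing $-\Delta f_\lambda = \lambda f_\lambda$ against the constant function $1$ gives $\int_X f_\lambda\,d\mm = 0$, so $f_\lambda$ satisfies the hypotheses of Theorem~\ref{T:main1}, yielding
$$
W_1(f_\lambda^+\mm, f_\lambda^-\mm)\cdot \Per(\{f_\lambda>0\}) \ \geq\ \frac{\|f_\lambda\|_{L^1(X,\mm)}^2}{8\,C_{K,D}\,\|f_\lambda\|_{L^\infty(X,\mm)}}.
$$
The proof thus reduces to establishing the Wasserstein upper bound
$$
W_1(f_\lambda^+\mm, f_\lambda^-\mm)\ \leq\ \frac{\sqrt{\mm(X)}\,\|f_\lambda\|_{L^2(X,\mm)}}{\sqrt{\lambda}},
$$
which, inserted into the previous inequality, produces exactly the statement of Theorem~\ref{T:main3}.

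For the Wasserstein bound I would adapt the heat-flow argument of Steinerberger. Let $h_t$ denote the $L^2$-gradient flow of the Cheeger energy $\Ch$. Since the Laplacian of Definition~\ref{D:L2Laplace} is positively $1$-homogeneous (as the subdifferential of the $2$-homogeneous convex functional $\Ch$), the curve $e^{-\lambda t} f_\lambda$ solves the heat equation with initial datum $f_\lambda$, hence $h_t f_\lambda = e^{-\lambda t} f_\lambda$. Writing
$$
f_\lambda\ =\ -\int_0^\infty \tfrac{d}{dt}h_t f_\lambda\,dt\ =\ -\int_0^\infty \Delta h_t f_\lambda\,dt\qquad\text{in } L^2(X,\mm),
$$
testing against a $1$-Lipschitz function $\varphi$ and integrating by parts gives, via Cauchy--Schwarz with the constant function $1$,
$$
\int_X \varphi f_\lambda\,d\mm \ \leq\ \int_0^\infty \|\nabla h_t f_\lambda\|_{L^1(X,\mm)}\,dt\ \leq\ \sqrt{\mm(X)}\int_0^\infty \|\nabla h_t f_\lambda\|_{L^2(X,\mm)}\,dt.
$$
The energy identity $\tfrac{d}{dt}\|h_t f_\lambda\|_{L^2}^2 = -4\,\Ch(h_t f_\lambda)$ combined with $h_t f_\lambda = e^{-\lambda t}f_\lambda$ gives $\|\nabla h_t f_\lambda\|_{L^2}^2 = \lambda\,e^{-2\lambda t}\,\|f_\lambda\|_{L^2}^2$; the last integral therefore equals $\sqrt{\mm(X)}\,\|f_\lambda\|_{L^2}/\sqrt{\lambda}$, and Kantorovich--Rubinstein duality delivers the desired bound on $W_1$.

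The main technical obstacle is that under the bare $\CD(K,N)$ assumption $\Ch$ need not be quadratic, so $h_t$ may be non-linear and one cannot quote the classical spectral or linear semigroup identities as black boxes. The three required facts---the eigenrelation $h_t f_\lambda = e^{-\lambda t}f_\lambda$, the integral identity $-\int f\,\Delta f\,d\mm = 2\,\Ch(f)$, and the duality bound $\int\varphi\,\Delta u\,d\mm \leq \|\nabla\varphi\|_{L^\infty}\,\|\nabla u\|_{L^1}$ for $1$-Lipschitz $\varphi$---nevertheless follow from the $2$-homogeneity and convexity of $\Ch$ combined with the weak Leibniz and chain-rule calculus available on $\CD(K,N)$ spaces: the eigenrelation is obtained by comparing initial derivatives, the integral identity by evaluating the subgradient inequality $\Ch(tf)\geq \Ch(f)-(t-1)\int f\,\Delta f\,d\mm$ as $t\to 1^\pm$, and the duality from the standard first-variation inequality. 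Assembling these ingredients in the non-linear setting is where the technical care goes, but the overall structure of the proof is as above.
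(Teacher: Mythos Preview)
Your overall strategy is exactly the paper's: combine the indeterminacy estimate (Theorem~\ref{T:main1}) with the upper bound
\[
W_1(f_\lambda^+\mm,f_\lambda^-\mm)\ \leq\ \frac{\sqrt{\mm(X)}}{\sqrt{\lambda}}\,\|f_\lambda\|_{L^2(X,\mm)},
\]
and your target inequality is precisely the content of Lemma~\ref{L:nodalestimate}. The difference is only in how you reach that bound.

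The paper avoids the heat flow entirely. It takes a $1$-Lipschitz Kantorovich potential $u$, uses the integration-by-parts inequality
\[
\lambda\int_X u f_\lambda\,\mm \ \leq\ \int_X D^+u(\nabla f_\lambda)\,\mm \ \leq\ \int_X |\nabla u|_w\,|\nabla f_\lambda|_w\,\mm \ \leq\ \int_X |\nabla f_\lambda|_w\,\mm,
\]
and then applies H\"older together with $\int |\nabla f_\lambda|_w^2\,\mm \leq \lambda\|f_\lambda\|_{L^2}^2$ (which follows by testing the eigenfunction equation against $f_\lambda$ itself). That is the whole argument: one integration by parts and one Cauchy--Schwarz.

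Your heat-flow route is not wrong, but it is a detour. Unwinding your computation, at each time $t$ you apply exactly the ``duality bound'' $-\int\varphi\,\Delta h_t f_\lambda\,\mm \leq \|\nabla h_t f_\lambda\|_{L^1}$, and since $h_t f_\lambda = e^{-\lambda t}f_\lambda$ this is the \emph{same} integration by parts as above, just multiplied by $e^{-\lambda t}$ and then integrated in $t$ to recover the factor $1/\lambda$. In other words, of the three ``required facts'' you list, only the third (the duality/first-variation inequality) is actually needed; the eigenrelation $h_t f_\lambda = e^{-\lambda t}f_\lambda$ and the energy identity are introduced solely by the heat-flow packaging and then cancel out. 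The technical work you anticipate in the non-Hilbertian setting (justifying the eigenrelation for a non-linear semigroup, the integral representation, etc.) is therefore entirely avoidable: apply the duality bound once, directly to $f_\lambda$, as in Lemma~\ref{L:nodalestimate}.
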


As before, Theorem \ref{T:main3}  is actually valid 
also in other frameworks: for spaces verifying $\CD(K,\infty)$ 
and $\MCP(K',N')$ (Theorem \ref{T:nodalCD}) or  for spaces verifying
$\MCP(K,N)$ (Theorem \ref{T:nodalMCP}) with dimension dependent constant. 

If in addition we assume the Laplacian to be linear (more precisely the Sobolev space $W^{1,2}(X,\sfd,\mm)$ to be an Hilbert space), 
then more techniques from the classical setting come into play (for instance 
contraction estimates for the heat flow) 
permitting to obtain more refined results.

\begin{theorem}[Nodal sets on $\RCD$ spaces I]\label{T:main4} 
Let $K,N \in \R$ with $N > 1$.
Let $(X,\sfd,\mm)$ be a m.m.s. satisfying $\RCD(K,N)$,  
and such that $\diam(X) = D <\infty$.
Let $f_{\lambda}$ be an eigenfunction of the Laplacian of eigenvalue 
$\lambda> 2$.
Then the following estimate is valid:
\begin{equation}\label{E:NodalCDintro1}
\Per\left(\{ x\in X\colon f_{\lambda}(x)>0\}\right)
\geq
\frac{1}{\bar C_{K,D,N}}
\sqrt{ \frac{\lambda} {\log \lambda} }
\cdot 
\frac{\norm{f_{\lambda}}_{L^1(X,\mm)}}{ \|f_{\lambda} \|_{L^{\infty}(X,\mm)}},
\end{equation}
where $\bar C_{K,D,N}$ grows linearly in $D$ if $K \geq 0$ and exponentially if $K< 0$ and grows with power $1/2$ in $N$.
\end{theorem}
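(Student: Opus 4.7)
The plan is to combine the indeterminacy estimate of Theorem \ref{T:main1} (applied to $f_\lambda$, which has zero mean since $\lambda>0$ and $\mm(X)<\infty$) with a heat-semigroup upper bound on $W_1(f_\lambda^+\mm,f_\lambda^-\mm)$. Since Theorem \ref{T:main1} yields
\begin{equation*}
\Per(\{f_\lambda>0\})\geq\frac{\|f_\lambda\|_{L^1}^2}{8C_{K,D}\,W_1(f_\lambda^+\mm,f_\lambda^-\mm)\,\|f_\lambda\|_{L^\infty}},
\end{equation*}
it is enough to prove
\begin{equation*}
W_1(f_\lambda^+\mm,f_\lambda^-\mm)\lesssim_{K,N,D}\sqrt{\tfrac{\log\lambda}{\lambda}}\,\|f_\lambda\|_{L^1},
\end{equation*}
which combined with the display above produces the claimed $\sqrt{\lambda/\log\lambda}$ factor. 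The linearity of the Laplacian in the $\RCD$ setting and the existence of a well-behaved heat semigroup $H_t$ are what make this second estimate available, and are the reason the result is sharper than Theorem \ref{T:main3}.

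To prove the $W_1$ bound, set $c:=\tfrac12\|f_\lambda\|_{L^1}=\|f_\lambda^\pm\|_{L^1}$ and $\mu^\pm:=f_\lambda^\pm\mm/c$. Using the dual heat flow $H_t^*$ on measures, write by the triangle inequality
\begin{equation*}
W_1(\mu^+,\mu^-)\le W_1(\mu^+,H_t^*\mu^+)+W_1(H_t^*\mu^+,H_t^*\mu^-)+W_1(H_t^*\mu^-,\mu^-).
\end{equation*}
The middle term is controlled by the eigenvalue equation: $H_t f_\lambda=e^{-\lambda t}f_\lambda$ gives
\begin{equation*}
H_t^*\mu^+-H_t^*\mu^-=\tfrac{1}{c}H_t^*(f_\lambda\mm)=\tfrac{e^{-\lambda t}}{c}f_\lambda\mm,
\end{equation*}
whose total variation is $2e^{-\lambda t}$, so the bound $W_1(\alpha,\beta)\le\tfrac{D}{2}\|\alpha-\beta\|_{TV}$ valid for probabilities on a space of diameter $D$ yields $W_1(H_t^*\mu^+,H_t^*\mu^-)\le D e^{-\lambda t}$. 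The outer two terms are handled via the natural coupling $\pi(dx,dy)=p_t(x,y)\mm(dx)\mu^\pm(dy)$, which gives
\begin{equation*}
W_1(\mu,H_t^*\mu)\le\sup_{y\in X}\int_X\sfd(x,y)p_t(x,y)\mm(dx);
\end{equation*}
invoking the Gaussian heat kernel estimates available in $\RCD(K,N)$ (for instance after Jiang--Li--Zhang, Sturm) together with the curvature-dependent Bakry--\'Emery gradient estimate, the right-hand side is $\le C_{K,N}\sqrt{t}$, with $C_{K,N}$ of order $\sqrt{N}$ from the Gaussian first-moment computation and with an exponential $K$-factor when $K<0$.

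Summing the three estimates and undoing the normalization gives
\begin{equation*}
W_1(f_\lambda^+\mm,f_\lambda^-\mm)\le\tfrac{\|f_\lambda\|_{L^1}}{2}\bigl(2C_{K,N}\sqrt{t}+D e^{-\lambda t}\bigr).
\end{equation*}
Choosing $t=(\log\lambda)/\lambda$ (which is admissible thanks to $\lambda>2$) collapses the exponential term into $D/\lambda$ and the first term into $C_{K,N}\sqrt{\log\lambda/\lambda}$, producing the desired $W_1$ bound with a constant matching the claimed behavior in $D$, $K$ and $N$. The main technical hurdle will be extracting from the $\RCD(K,N)$ heat-kernel literature an explicit first-moment bound for $p_t$ with constants that are linear in $D$ when $K\ge 0$, exponential in $D$ when $K<0$, and of order $\sqrt{N}$; everything else — the triangle inequality, the eigenvalue identity $H_t f_\lambda=e^{-\lambda t}f_\lambda$, the $TV$--$W_1$ estimate on bounded spaces, and the one-line optimization in $t$ — is essentially algebraic.
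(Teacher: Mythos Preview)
Your overall strategy matches the paper's exactly: combine the indeterminacy estimate (Theorem~\ref{T:main1}) with an upper bound $W_1(f_\lambda^+\mm,f_\lambda^-\mm)\le C\sqrt{\log\lambda/\lambda}\,\|f_\lambda\|_{L^1}$, the latter obtained via the triangle inequality splitting through $H_t^*\mu^\pm$, the eigenvalue identity $H_tf_\lambda=e^{-\lambda t}f_\lambda$ for the middle term, and the choice $t=(\log\lambda)/\lambda$.

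The one substantive difference is how you bound the outer terms $W_1(\mu^\pm,H_t^*\mu^\pm)$. You propose to use Gaussian heat-kernel bounds (Jiang--Li--Zhang) and compute the first moment of $p_t(\cdot,y)$. The paper instead invokes the Erbar--Kuwada--Sturm $W_2$-contraction estimate (Theorem~\ref{T:contraction}) with $s=0$, which gives directly
\[
W_2(H_t\mu,\mu)^2\le 2N\,\frac{1-e^{-2Kt/3}}{2Kt/3}\,t,
\]
hence $W_1(\mu,H_t\mu)\le W_2(\mu,H_t\mu)\le C(t,K,N)\sqrt{t}$ with $C(t,K,N)\le\sqrt{2N}$ when $K\ge 0$. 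This is cleaner: it produces the claimed $\sqrt{N}$ growth in a single line and avoids what you yourself identify as the main technical hurdle. Extracting a constant of order $\sqrt{N}$ from Gaussian heat-kernel bounds is less immediate, since those bounds typically carry the doubling constant, which depends on $N$ more strongly. Your route would still work, but with a worse (or at least less transparent) $N$-dependence than the statement asserts.
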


The estimate \eqref{E:NodalCDintro1} follows 
directly from the following estimate 
\begin{equation}\label{E:upperbound}
 W_1(f^+_{\lambda}\mm,f^-_{\lambda}\mm)
\leq C(K,N,D) 
\sqrt{\frac{\log \lambda }{\lambda}}\|f_{\lambda}\|_{L^1},
\end{equation}
(see Proposition \ref{P:upperboundW1}),
already obtained in the smooth setting by Steinerberger \cite{Steinerberger17}, and recently improved in \cite{CMO20}
removing the $\log \lambda$ term but with an approach that seems confined to the smooth setting.

Finally, we notice that using already available  $L^{\infty}$ estimates for Laplace eigenfunctions one can obtain an explicit lower bound on the size of the nodal set of an eigenfunction.

\begin{theorem}[Nodal sets on $\RCD$ spaces II]\label{T:main5} 
Let $K,N \in \R$ with $N > 1$.
Let $(X,\sfd,\mm)$ be a m.m.s. verifying $\RCD(K,N)$,
and with $\diam(X) = D <\infty$; finally pose $\mm(X) = 1$.
Let $f_{\lambda}$ be an eigenfunction of the Laplacian of eigenvalue 
$\lambda> \max{\lbrace 2, D^{-1}\rbrace}$.
Then the following estimate is valid:
\begin{equation}\label{E:NodalCDintro2}
\Per\left(\{ x\in X\colon f_{\lambda}(x)>0\}\right)
\geq
\frac{1}{\bar C_{K,D,N}}
\frac{1} {\sqrt{\log \lambda} } \lambda^{\frac{1-N}{2}},
\end{equation}
where $\bar C_{K,D,N}$ grows linearly in $D$ if $K \geq 0$ and exponentially if $K< 0$ and grows with power $1/2$ in $N$. 
\end{theorem}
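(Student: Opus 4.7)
The plan is to derive Theorem~\ref{T:main5} directly from Theorem~\ref{T:main4} by incorporating a quantitative $L^{\infty}$-bound on $L^{2}$-normalized eigenfunctions. Rescaling $f_{\lambda}$ so that $\|f_{\lambda}\|_{L^{2}(X,\mm)}=1$, the conclusion of Theorem~\ref{T:main4} reads
\begin{equation*}
\Per(\{f_{\lambda}>0\}) \;\geq\; \frac{1}{\bar C_{K,D,N}}\,\sqrt{\frac{\lambda}{\log\lambda}}\,\cdot\,\frac{\|f_{\lambda}\|_{L^{1}}}{\|f_{\lambda}\|_{L^{\infty}}},
\end{equation*}
so the task reduces to a lower bound on $\|f_{\lambda}\|_{L^{1}}/\|f_{\lambda}\|_{L^{\infty}}$. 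A one-line interpolation, namely $\|f_{\lambda}\|_{L^{2}}^{2}=\int|f_{\lambda}|^{2}\,\mm \leq \|f_{\lambda}\|_{L^{\infty}}\|f_{\lambda}\|_{L^{1}}$, together with $\|f_{\lambda}\|_{L^{2}}=1$, gives
\begin{equation*}
\frac{\|f_{\lambda}\|_{L^{1}}}{\|f_{\lambda}\|_{L^{\infty}}} \;\geq\; \frac{1}{\|f_{\lambda}\|_{L^{\infty}}^{2}}.
\end{equation*}

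The second ingredient is a heat-kernel-based $L^{\infty}$ bound. On an $\RCD(K,N)$ space with $\diam(X)=D<\infty$ and $\mm(X)=1$, the heat semigroup $(P_{t})_{t\ge 0}$ is ultracontractive with the scaling-sharp estimate
\begin{equation*}
\|P_{t}g\|_{L^{\infty}} \;\leq\; C(K,N,D)\, t^{-N/4}\,\|g\|_{L^{2}}, \qquad 0<t\leq 1,
\end{equation*}
which follows from the Sobolev embedding (equivalently, the Gaussian heat-kernel upper bound) available in the $\RCD(K,N)$ setting. Since $f_{\lambda}$ is an eigenfunction, $P_{t}f_{\lambda}=e^{-\lambda t}f_{\lambda}$, whence $\|f_{\lambda}\|_{L^{\infty}}\leq C\,e^{\lambda t}t^{-N/4}$. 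Choosing $t=1/\lambda$, which belongs to $(0,1]$ under the hypothesis $\lambda>\max\{2,D^{-1}\}$, I obtain
\begin{equation*}
\|f_{\lambda}\|_{L^{\infty}} \;\leq\; e\,C(K,N,D)\,\lambda^{N/4}.
\end{equation*}

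Feeding this into the reduction of the first paragraph yields $\|f_{\lambda}\|_{L^{1}}/\|f_{\lambda}\|_{L^{\infty}}\geq C'(K,N,D)\,\lambda^{-N/2}$, and plugging back into the consequence of Theorem~\ref{T:main4} gives precisely
\begin{equation*}
\Per(\{f_{\lambda}>0\}) \;\geq\; \frac{1}{\bar C_{K,D,N}}\,\frac{1}{\sqrt{\log\lambda}}\,\lambda^{(1-N)/2},
\end{equation*}
with $\bar C_{K,D,N}$ exhibiting the dependence claimed in the statement (linear in $D$ for $K\geq 0$, exponential for $K<0$, with $N^{1/2}$-type behaviour inherited from the ultracontractivity constant $C(K,N,D)^{2}$). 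The main technical obstacle is not the algebraic combination but justifying the ultracontractivity estimate with the sharp $t^{-N/4}$ scaling in the $\RCD(K,N)$ framework: this is known (from the Li--Yau/Sturm-type heat kernel bounds in $\RCD$ spaces, and Sobolev embeddings therein), but requires a careful citation and verification that the constant indeed depends on $K,D,N$ in the prescribed manner; all other steps are elementary interpolation arguments that follow once the eigenfunction is identified with the output of the heat semigroup.
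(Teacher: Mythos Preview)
Your proposal is correct and follows essentially the same route as the paper: combine the lower bound of Theorem~\ref{T:main4} with the interpolation $\|f_{\lambda}\|_{L^{2}}^{2}\le \|f_{\lambda}\|_{L^{\infty}}\|f_{\lambda}\|_{L^{1}}$ and an $L^{\infty}$--$L^{2}$ eigenfunction estimate $\|f_{\lambda}\|_{L^{\infty}}\le C(K,N,D)\,\lambda^{N/4}\|f_{\lambda}\|_{L^{2}}$. The only difference is cosmetic: the paper quotes this eigenfunction bound from \cite[Proposition~7.1]{AmbrHonPort} (valid for $\lambda\ge D^{-2}$), whereas you reprove it by ultracontractivity of $P_{t}$ and the identity $P_{t}f_{\lambda}=e^{-\lambda t}f_{\lambda}$ with $t=1/\lambda$; these are the same argument at one remove.
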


To conclude we mention that if the Laplacian is linear, 
like for instance in the $\RCD$ setting, 
Theorem \ref{T:main3} and
Theorem \ref{T:main4} extend also to linear combinations 
of eigenfunction giving a non-smooth analogue of Sturm-Hurwitz' Theorem, see \cite{BerardHleffer20}; see Section \ref{Ss:infinitesimalHilbertian} for 
all the results.

%------------------------------------------------------------------
%------------------------------------------------------------------
\subsection{Outlook}\label{Ss:outlook}

Consider a compact, smooth, $N$-dimensional Riemannian manifold $M$ endowed with 
the geodesic distance $\sfd_{g}$ and the volume measure $\vol_{g}$. For this class of spaces 
the improved version of \eqref{E:upperbound} obtained in \cite{CMO20} holds true.
Hence our Theorem \ref{T:main1} gives the following inequality
$$
\H^{N-1}\left(\{ x\in X\colon f_{\lambda}(x)=0\}\right)
\geq
\frac{1}{\bar C_{K,D,N}}
\sqrt{\lambda} 
\cdot 
\frac{\norm{f_{\lambda}}_{L^1(M)}}
{ \|f_{\lambda} \|_{L^{\infty}(M)}}, 
$$
One can then use the inequality 
$\|f_{\lambda}\|_{\infty} 
\leq \lambda^{\frac{N-1}{4}}\| f_{\lambda} \|_{1}$ by Sogge and Zelditch \cite{SoggeZelditch2012} (which is known to be sharp on spherical harmonics) and obtain 
$$
\H^{N-1}\left(\{ x\in X\colon f_{\lambda}(x)=0\}\right)
\geq 
\lambda^{\frac{3-N}{4}},
$$
reproving the estimate \eqref{E:ColdingMinicozzi} 
by Colding and Minicozzi \cite{ColdingMinicozzi}, 
Sogge and Zelditch \cite{SoggeZelditch2011,SoggeZelditch2012} and by 
Steinerberger  \cite{Steinerberger14}.
It is therefore plausible to expect \eqref{E:ColdingMinicozzi} (or its counterpart with the Perimeter) to holds true 
also for compact $\RCD$-spaces, provided the the validity of the following two inequalities
is established
$$
W_1(f^+_{\lambda}\mm,f^-_{\lambda}\mm)
\leq C(K,N,D) 
\frac{1}{\sqrt{\lambda}}\|f_{\lambda}\|_{L^1},
\qquad \|f_{\lambda}\|_{L^\infty} 
\leq \lambda^{\frac{N-1}{4}}\| f_{\lambda} \|_{L^1}.
$$
that are left for a future investigation. Similar investigation will be also carried out for the quasi-symmetry property of eigenfunction in the non-smooth setting.

%%%%%%%%%%%%%%%%%%%%%%%%%%%%%%%%%%%%%%%%%%%%%%%%%%%%%%%%%%%%%%%%%%%%%%%%%%%%%%%%%%%%%%%%%%%%%%%%%%%%%%%%%%%%%%
%%%%%%%%%%%%%%%%%%%%%%%%%%%%%%%%%%%%%%%%%%%%%%%%%%%%%%%%%%%%%%%%%%%%%%%%%%%%%%%%%%%%%%%%%%%%%%%%%%%%%%%%%%%%%%
\section{Preliminaries}
\label{S:preliminaries}

In what follows, $(X,\sfd,\mm)$  
will be a complete and separable metric measure space that is 
$(X,\sfd)$ is a complete and separable metric space 
 and $\mm$ is a non-negative Radon measure on $X$. 
Also, throughout the note, the various curvature conditions
we will assume will imply $X$ to be proper (bounded and closed sets are compact). 
In various situation this will simplify the presentation 
(see Section \ref{Ss:Laplacian}).

%-----------------------------------------------------------------------------------

\subsection{Synthetic Curvature conditions}
\label{subsec:cdbounds}

We briefly recall the main definitions of curvature bounds for metric measure spaces that we will use throughout the paper 
referring for more details to the original papers 
\cite{lottvillani:metric, sturm:I, sturm:II}.

In the following $\mathcal{P}(X)$ is the space of Borel probability measures on $X$ and, for $p\geq 1$, $\mathcal{P}_{p}(X)$ is
the space of Borel probability measures with finite $p$-moment.

The \emph{p-Wasserstein distance} 
$W_p$ on $\mathcal{P}_p(X)$ is defined for any $\mu_0,\mu_1 \in \mathcal{P}_p(X)$
as follows:
\begin{equation}\label{eq:wasser_dist}
W_p(\mu_0,\mu_1)^{p} : = 
\inf_{\pi \in\Pi(\mu_{0},\mu_{1}) }  \int_{X \times X} \sfd^p (x,y)\,\pi(dxdy), \quad 
\end{equation}
where 
$$
\Pi(\mu_{0},\mu_{1}): =
\left\{ \pi \in \mathcal{P} (X \times X) \colon 
P^{(1)}_\sharp \pi = \mu_0, P^{(2)}_\sharp \pi = \mu_1 \right\}
$$
is the set of admissible transport plans between $\mu_{0}$ and $\mu_{1}$ and 
$P^{(i)}$ is the projection on the $i$-th component, for $i = 1,2$.
We will only considering in this note $W_{1}$ and $W_{2}$.
$\Geo(X)$ denotes the space of constant speed geodesics on $X$:
$$
\Geo(X): = \left\{\gamma \in C([0,1],X) \colon 
			\text{$\sfd(\gamma(s),\gamma(t))=|s-t|\sfd(\gamma(0),\gamma(1))$ for any $s,t\in[0,1]$}\right\}.
$$
For any $t\in[0,1]$, the evaluation map $\ee_t$ is defined on 
$\Geo(X)$ by
$ \ee_t(\gamma) : =  \gamma(t)$.
For any pair of measures $\mu_0$, $\mu_1$ in 
$\mathcal{P}_2(X)$, 
the set of dynamical optimal plans is defined by
$$
\OptGeo(\mu_0,\mu_1)
		: =	\left\{
		\nu \in \mathcal{P}(\Geo(X)) \colon
(\ee_0,\ee_1)_\sharp \nu \quad \text{realizes the minimum in \eqref{eq:wasser_dist}}\right\}.
$$
\begin{definition}[Essentially non-branching]  
A subset $G \subset \Geo(X, \sfd)$ 
of geodesics is called non-branching if for any $\gamma^{1}, \gamma^{2} \in G$ 
the following holds: 
$$
\exists \, t \in (0,1) \colon \gamma^{1}_{s} = \gamma^{2}_{s} \quad \forall \ s \in [0,t]
\Longrightarrow \ \gamma^{1}_{2} = \gamma^{2}_{s} \quad \forall \ s\in [0,1].
$$
$(X, \sfd)$ is called non-branching if $\Geo(X,\sfd)$ is non-branching. 
$(X, \sfd, \mm)$ is called essentially non-branching if for any 
$\mu_{0},\mu_{1} \ll \mm$ with $\mu_{0},\mu_{1} \in \mathcal{P}_{2}(X)$ 
any $\nu \in \OptGeo(\mu_{0},\mu_{1})$
is concentrated on a Borel non-branching subset $G\subset \Geo(X, \sfd)$.
\end{definition}
The above definition was introduced in \cite{RS2014} by Rajala and Sturm.
The restriction to essentially non-branching spaces is natural and facilitates avoiding pathological cases. One example is the failure of the local-to-global property for 
a general $\CD(K,N)$ in \cite{R2016},
property that has been recently verified in \cite{CM16} under the assumption of essentially non-branching (and finite 
$\mm$).

\smallskip
Given $K \in \Real$ and $\NN \in (0,\infty]$, define:
\begin{equation}\label{E:diameter}
D_{K,\NN} := \begin{cases}  \frac{\pi}{\sqrt{K/\NN}}  & K > 0 \;,\; \NN < \infty \\ +\infty & \text{otherwise} \end{cases} .
\end{equation}
In addition, given $t \in [0,1]$ and $0 < \theta < D_{K,\NN}$, define:
$$
\sigma^{(t)}_{K,\NN}(\theta) := \frac{\sin(t \theta \sqrt{\frac{K}{\NN}})}{\sin(\theta \sqrt{\frac{K}{\NN}})} = 
\begin{cases}   
\frac{\sin(t \theta \sqrt{\frac{K}{\NN}})}{\sin(\theta \sqrt{\frac{K}{\NN}})}  & K > 0 \;,\; \NN < \infty \\
t & K = 0 \text{ or } \NN = \infty \\
 \frac{\sinh(t \theta \sqrt{\frac{-K}{\NN}})}{\sinh(\theta \sqrt{\frac{-K}{\NN}})} & K < 0 \;,\; \NN < \infty 
\end{cases} ,
$$
and set $\sigma^{(t)}_{K,\NN}(0) = t$ and $\sigma^{(t)}_{K,\NN}(\theta) = +\infty$ for $\theta \geq D_{K,\NN}$. 
Given $K \in \Real$ and $N \in (1,\infty]$, 
the \emph{distortion coefficients} are defined as:
$$
\tau_{K,N}^{(t)}(\theta) := t^{\frac{1}{N}} \sigma_{K,N-1}^{(t)}(\theta)^{1 - \frac{1}{N}} .
$$
When $N=1$, set $\tau^{(t)}_{K,1}(\theta) = t$ if $K \leq 0$ and $\tau^{(t)}_{K,1}(\theta) = +\infty$ if $K > 0$.

\smallskip

The \emph{R\'enyi entropy} functional 
$\mathcal{E}:\mathcal{P} (X) \to [0,\infty]$ is defined as
$$
\mathcal{E}(\mu): = 
\int_{X} \rho^{1-\frac{1}{N}}\,\mm, \quad 
\text{where } \mu = \rho\mm+\mu^s \text{ and } 
\mu^s \perp \mm,
$$
and the {\em Boltzman entropy} 
$\Ent : \mathcal{P} (X) \to [0,\infty]$
defined by 
$$
\Ent(\mu): = 
\int_{X} \rho \log (\rho)\,\mm, \quad 
\text{if } \mu = \rho\mm, \text{ and } 
\Ent(\mu) : = \infty \text{ otherwise } 
$$

\begin{definition}[$\CD$ conditions]
$(X,\sfd,\mm)$ verifies the $\CD(K,N)$ (resp. $\CD(K,\infty)$) condition for some $K\in \R$, $N\in(1,\infty)$ 
if for any pair of probability measures  
$\mu_0,\mu_1\in\mathcal{P}(X)$ 
 with $\mu_0,\mu_1\ll \mm$ (and $\Ent(\mu_{i})<\infty$, $i = 0,1$), 
there exists $\nu \in \OptGeo(\mu_0,\mu_1)$ 
and an optimal plan $\pi\in \Pi(\mu_{0},\mu_{1})$ 
such that $\mu_t : = (\ee_t)_\sharp \nu \ll \mm$ 
and 
$$
\mathcal{E}_{N'}(\mu_t)\geq 
\int\left\{
\tau^{(1-t)}_{K,N'}(\sfd(x,y))\rho_0^{-\frac{1}{N'}}+
\tau^{(t)}_{K,N'}(\sfd(x,y))  \rho_1^{-\frac{1}{N'}} 
				\right\}  \pi (dxdy)
$$
for any $N'\geq N$, $t\in[0,1]$ (resp .
$$
\Ent(\mu_t)\leq 
(1-t)\Ent(\mu_{0}) + t\Ent(\mu_{1}) - \frac{K}{2}t(1-t) 
W_{2}(\mu_{0},\mu_{1})^{2}\text{).}
$$

\end{definition}

For our purposes we also need to introduce a weaker variant of $\CD$ 
called Measure-Contraction property, $\MCP(K,N)$ in short, 
introduced separately by Ohta \cite{Ohta1} and Sturm \cite{sturm:II} with
 two definitions that slightly differ in general metric spaces, but that  coincide on essentially non-branching spaces.

\begin{definition}[$\MCP(K,N)$] \label{D:Ohta1}
A m.m.s. $(X,\sfd,\mm)$ is said to satisfy $\MCP(K,N)$ if for any $o \in \supp(\mm)$ and  $\mu_0 \in \P_2(X,\sfd,\mm)$ of the form $\mu_0 = \frac{1}{\mm(A)} \mm\llcorner_{A}$ for some Borel set $A \subset X$ with $0 < \mm(A) < \infty$ (and with $A \subset B(o, \pi \sqrt{(N-1)/K})$ if $K>0$), there exists $\nu \in \Opt(\mu_0, \delta_{o} )$ such that:
\begin{equation} \label{eq:MCP-def}
\frac{1}{\mm(A)} \mm \geq (\ee_{t})_{\sharp} \big( \tau_{K,N}^{(1-t)}(\sfd(\gamma_{0},\gamma_{1}))^{N} \nu(d \gamma) \big) \;\;\; \forall t \in [0,1] .
\end{equation}
\end{definition}

If $(X,\sfd,\mm)$ is a m.m.s. verifying $\MCP(K,N)$, then $(\supp(\mm),\sfd)$  is Polish, proper and it is a geodesic space. 
With no loss in generality for our purposes we will assume that $X = \supp(\mm)$.

\smallskip

To conclude this part we include a list of notable examples of spaces fitting in the assumptions of our results. 
The class of essentially non branching  $\CD(K,N)$ 
spaces includes many remarkable family of spaces, among them:
\begin{itemize}
\item \emph{Measured Gromov Hausdorff limits of Riemannian $N$-dimensional manifolds  satisfying  ${\rm Ric}_g\ge Kg$ and more generally the class of $\RCD(K,N)$ spaces}.
Indeed measured Gromov Hausdorff limits of Riemannian $N$-manifolds satisfying  ${\rm Ric}_g\ge Kg$  are examples of $\RCD(K,N)$ spaces (see for instance \cite{EKS} and for the definition of $\RCD$ see Section \ref{Ss:Laplacian}) and, in particular, 
are essentially non-branching and $\CD(K,N)$ (see \cite{RS2014}).
\item \emph{Alexandrov spaces with curvature $\geq K$}.
Petrunin \cite{PLSV} proved that the lower curvature bound in the sense of comparison triangles is compatible with the optimal transport type lower bound on the  Ricci curvature given by Lott-Sturm-Villani.  Moreover  geodesics in Alexandrov spaces with curvature bounded below do not branch. It follows that Alexandrov spaces with curvature bounded from below by $K$ are non-branching  $\CD(K(N-1),N)$ spaces.

\item \emph{Finsler manifolds where the norm on the tangent spaces is strongly convex, and which satisfy lower Ricci curvature bounds.} More precisely we consider a $C^{\infty}$-manifold  $M$, endowed with a function $F:TM\to[0,\infty]$ such that $F|_{TM\setminus \{0\}}$ is $C^{\infty}$ and  for each $p \in M$ it holds that $F_p:=T_pM\to [0,\infty]$ is a  strongly-convex norm, i.e.
$$\qquad \quad \; g^p_{ij}(v):=\frac{\partial^2 (F_p^2)}{\partial v^i \partial v^j}(v) \quad \text{is a positive definite matrix at every } v \in T_pM\setminus\{0\}. $$
Under these conditions, it is known that one can write the  geodesic equations and geodesics do not branch: in other words these spaces are non-branching.
We also assume $(M,F)$ to be geodesically complete and endowed with a $C^{\infty}$ probability measure $\mm$ in a such a way that the associated m.m.s. $(X,F,\mm)$ satisfies the $\CD(K,N)$ condition. This class of spaces has been investigated by Ohta \cite{Ohta} who established the equivalence between the Curvature Dimension condition and a Finsler-version of Bakry-Emery $N$-Ricci tensor bounded from below.
\end{itemize}

While $\CD(K,N)$ implies the weaker $\MCP(K,N)$, the latter is 
capable to capture the behaviour of more general family of spaces. 
In particular, for a complete list of subRiemannian spaces verifying 
the $\MCP(K,N)$ (and not $\CD(K,N)$), we refer to the recent \cite{Milman}.

%------------------------------------------------------------------------------------
%------------------------------------------------------------------------------------
\subsection{Localization and one-dimensional densities}

One of the key tools of our approach to obtain a sharp indeterminacy estimate 
is the dimensional reduction argument furnished by 
localization theorem. 
In its various forms, the following theorem goes back to 
\cite{biacava:streconv} for the $\MCP$ case (with a slightly different presentation), while to \cite{CM1} for the $\CD(K,N)$ case with $\mm(X) <\infty$ and to 
\cite{CM18} for a general Radon measure. 
We refer to the aforementioned references for all the missing details.

\begin{theorem}\label{T:localization}
Let $(X,\sfd, \mm)$ be an essentially non-branching metric measure space with $\supp(\mm) = X$. 
Let $f : X \to \R$ be $\mm$-integrable such that $\int_{X} f\, \mm = 0$ and assume the existence of $x_{0} \in X$ such that $\int_{X} | f(x) |\,  \sfd(x,x_{0})\, \mm(dx)< \infty$. 

Assume also $(X,\sfd, \mm)$ verifies
$\CD(K,N)$ (resp. $\MCP(K,N)$) condition for some $K\in \R$ and $N\in [1,\infty)$.

\medskip

Then the space $X$ can be written as the disjoint union of two sets $Z$ and $\mathcal{T}$ with $\mathcal{T}$ admitting a partition 
$\{ X_{\alpha} \}_{\alpha \in Q}$ and a corresponding disintegration of $\mm\llcorner_{\mathcal{T}}$ such that: 
$$
\mm\llcorner_{\T} = \int_{Q} \mm_{\alpha} \, \qq(d\alpha),
$$
where $\qq$ is a Borel probability measure over $Q \subset X$ such that 
$\QQ_{\sharp}( \mm\llcorner_{\T} ) \ll \qq$, with $\QQ$ the quotient map associated to the partition
and the map 
$Q \ni \alpha \mapsto \mm_{\alpha} \in \mathcal{M}_{+}(X)$ satisfying the following properties:

\begin{itemize}
\item for any $\mm$-measurable set $B$, the map $\alpha \mapsto \mm_{\alpha}(B)$ is $\qq$-measurable;
\item 
for $\qq$-a.e. $\alpha \in Q$, $\mm_{\alpha}$ is concentrated on $\QQ^{-1}(\alpha) = X_{\alpha}$ (strong consistency);
\item For $\qq$-almost every $q \in Q$, it holds $\int_{X_{q}} f \, \mm_{q} = 0$ and $f = 0$ $\mm$-a.e. in $Z$.
\item For $\qq$-almost every $q \in Q$, the set $X_{q}$ is a geodesic (even more a transport ray) and 
the one dimensional m.m.s. $(X_{\alpha},\sfd, \mm_{\alpha})$ 
verifies $\CD(K,N)$ (resp. $\MCP(K,N)$).
\end{itemize}
Moreover, fixed any $\qq$ as above such that $\QQ_{\sharp}( \mm\llcorner_{\T} ) \ll \qq$, the disintegration is $\qq$-essentially unique.
\end{theorem}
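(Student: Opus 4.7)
The plan is to prove Theorem~\ref{T:localization} via the $L^{1}$-optimal transport machinery. Since $f$ is $\mm$-integrable with $\int_X f\,\mm = 0$ and has a finite first moment at some $x_0$, the positive and negative parts $f^+\mm$ and $f^-\mm$ are finite nonnegative measures of equal mass, and one can set up the Monge--Kantorovich problem with cost $\sfd$. By Kantorovich duality there exists a $1$-Lipschitz potential $u:X \to \R$ satisfying $u(x) - u(y) = \sfd(x,y)$ on the support of every optimal plan between $f^+\mm$ and $f^-\mm$. This potential induces the transport relation $R_u := \{(x,y) : u(x) - u(y) = \sfd(x,y)\}$, whose equivalence classes, restricted to a carefully chosen transport set $\mathcal{T}\subset X$, are maximal geodesics, called transport rays.

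Next I would analyze the geometry of $R_u$: one defines $\mathcal{T}$ by removing from $X$ the fixed points of $u$, the endpoints of rays, and the ``branching set'' where three or more rays meet. Using the essentially non-branching hypothesis together with basic consequences of the $\CD$ (resp. $\MCP$) condition (such as absence of atoms and properness of $X$), one shows that $Z := X \setminus \mathcal{T}$ satisfies $f = 0$ $\mm$-a.e. on $Z$; the key point is that transport restricted to $Z$ is necessarily trivial, so only cancellation of $f^+$ against $f^-$ can happen there. The quotient map $\QQ:\mathcal{T} \to Q$ assigning each point to its transport ray is Borel, and the abstract disintegration theorem on Polish spaces then yields $\mm\llcorner_{\mathcal{T}} = \int_Q \mm_\alpha\, \qq(d\alpha)$ with strong consistency, once a suitable $\qq$ with $\QQ_\sharp(\mm\llcorner_{\mathcal{T}}) \ll \qq$ is fixed.

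The zero-mean property along rays, namely $\int_{X_\alpha} f\,\mm_\alpha = 0$ for $\qq$-a.e.\ $\alpha$, is a consequence of $c$-cyclical monotonicity of the optimal plan with respect to the cost $\sfd$: mass is transported along single transport rays and never crosses between them, so the restriction of an optimal plan to $X_\alpha \times X_\alpha$ has marginals $f^+\mm\llcorner_{X_\alpha}$ and $f^-\mm\llcorner_{X_\alpha}$ of equal total mass, translating to the claimed balance condition on $f$.

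The main obstacle, and the content of \cite{CM1,CM18}, is to prove that each one-dimensional conditional $(X_\alpha,\sfd,\mm_\alpha)$ inherits the $\CD(K,N)$ (resp.\ $\MCP(K,N)$) condition. The strategy is by contradiction: a failure of the one-dimensional curvature bound on a $\qq$-positive set of rays would allow one to glue ``bad'' one-dimensional interpolations along many rays into a $W_2$-geodesic between absolutely continuous measures in the ambient space that violates the required convexity inequality for the R\'enyi entropy (or Boltzmann entropy in the $\CD(K,\infty)$ case). Making this rigorous requires the essentially non-branching assumption to keep the ray-by-ray interpolations disjoint, together with a careful measurable-selection argument to lift one-dimensional transports to a global optimal plan. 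Essential uniqueness of the disintegration follows from the uniqueness clause of the abstract disintegration theorem once the quotient structure is fixed.
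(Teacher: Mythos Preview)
The paper does not give its own proof of Theorem~\ref{T:localization}: it is stated as a background result and attributed to \cite{biacava:streconv} for the $\MCP$ case and to \cite{CM1,CM18} for the $\CD(K,N)$ case, with the reader referred there for details. Your outline is a faithful sketch of the argument in those references: one passes through a $1$-Lipschitz Kantorovich potential $u$ for the $L^{1}$ problem between $f^{+}\mm$ and $f^{-}\mm$, builds the transport set $\mathcal{T}$ and its ray decomposition from the order relation induced by $u$, removes the branching/endpoint set using essential non-branching, applies the abstract disintegration theorem, and then shows that the conditional measures inherit the one-dimensional curvature bound.

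Two small corrections of emphasis. First, the zero-mean property $\int_{X_{\alpha}} f\,\mm_{\alpha}=0$ is usually not derived from $c$-cyclical monotonicity per se, but from the fact that the Kantorovich potential $u$ restricted to each ray is an isometry onto an interval, so that $f^{+}\mm_{\alpha}$ and $f^{-}\mm_{\alpha}$ are marginals of the restriction to $X_{\alpha}\times X_{\alpha}$ of the disintegrated optimal plan; equivalently, one localizes the constraint $\int f\,\mm=0$ along the quotient map. Second, the inheritance of $\CD(K,N)$ on the rays in \cite{CM1,CM18} is not argued by a global contradiction from a $\qq$-positive bad set, but rather by a direct density argument: one shows that the conditional measures are absolutely continuous with respect to $\mathcal{H}^{1}$ on the ray, and then that the density satisfies the required concavity inequality (Definition~\ref{def:CDKN-density}) by testing the ambient $\CD(K,N)$ condition on measures supported in thin tubes around a single ray and passing to the limit. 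Your contradiction sketch is morally equivalent but not how the cited proofs are actually organised.
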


\begin{remark}\label{R:realinterval}
Via the ray map $g$ associated to the transport set of $f^+\mm$ into $f^-\mm$ (see for instance \cite{biacava:streconv}),  
we have that for $\qq$-a.e. $q \in Q$ 
$$
\mm_{q} = g(q,\cdot)_\sharp  \left(  h_{q} \cdot \mathcal{L}^{1} \right),  
$$
for some function $h_{q} :  \dom (g(q,\cdot)) \subset \R \to [0,\infty)$ where $\dom (g(q,\cdot))$ is an interval $I_q\subset \Real$ and $(I_q,\,|\cdot|,\, h_{q} \cdot \mathcal{L}^{1})$ is isomorphic to $(X_{\alpha},\sfd, \mm_{\alpha})$; in particular 
it verifies $\CD(K,N)$ (resp. $\MCP(K,N)$).
\end{remark}
\medskip

We will therefore spend few lines on one-dimensional m.m.s. verifying curvature bounds.

\begin{definition}[$\CD(K,N)$ density] \label{def:CDKN-density}
Given $K,N \in \Real$ and $N \in (1,\infty)$, a non-negative function $h$ defined on an interval $I \subset \Real$ is called a $\CD(K,N)$ density on $I$,  if for all $x_0,x_1 \in I$ and $t \in [0,1]$:
$$
 h(t x_1 + (1-t) x_0)^{\frac{1}{N-1}} \geq  \sigma^{(t)}_{K,N-1}(\abs{x_1-x_0}) h(x_1)^{\frac{1}{N-1}} + \sigma^{(1-t)}_{K,N-1}(\abs{x_1-x_0}) h(x_0)^{\frac{1}{N-1}} ,
$$
(recalling the coefficients $\sigma$ from Section \ref{subsec:cdbounds}). 
The case $N = \infty$ request instead
$$
 \log h (t x_1 + (1-t) x_0) \geq t \log h(x_1) + (1-t) \log h(x_0) + \frac{K}{2} t (1-t) (x_1-x_0)^2, 
$$
obtained from the previous one
subtracting 1 from both sides, multiplying by $N-1$, and taking the limit as $N \rightarrow \infty$.
For completeness, we will say that $h$ is a $\CD(K,1)$ density on $I$ iff $K \leq 0$ and $h$ is constant on the interior of $I$. 
\end{definition}

\begin{definition}[$\MCP(K,N)$ density]
Given $K,N \in \Real$ and $N \in (1,\infty)$, a non-negative function $h$ defined on an interval $I \subset \Real$ is called a $\MCP(K,N)$ density on $I$ if for all $x_0,x_1 \in I$ and $t \in [0,1]$:
\begin{equation}\label{E:MCPdef}
 h(t x_1 + (1-t) x_0) \geq \sigma^{(1-t)}_{K,N-1}(\abs{x_1-x_0})^{N-1} h(x_0).
\end{equation}
\end{definition}

The link between one dimensional m.m.s. with curvature bounds and densities is 
contained in the next straightforward result.

\begin{theorem}
If $h$ is a $\CD(K,N)$ (resp. $\MCP(K,N)$) density on an interval $I \subset \R$ then the m.m.s. $(I,\abs{\cdot},h(t) dt)$ verifies $\CD(K,N)$ (resp. $\MCP(K,N)$). 

Conversely, if the m.m.s. $(\Real,\abs{\cdot},\mu)$ verifies $\CD(K,N)$ (resp. $\MCP(K,N)$) and $I = \supp(\mu)$ is not a point, then $\mu \ll \L^1$ and there exists a version of the density $h = d\mu / d\L^1$ which is a $\CD(K,N)$ (resp. $\MCP(K,N)$) density on $I$.\end{theorem}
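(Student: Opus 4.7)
The theorem has a forward direction (density inequality $\Rightarrow$ one-dimensional curvature condition) and a converse. My plan for the $\CD(K,N)$ forward direction is to exploit the fact that in one dimension the unique optimal plan between absolutely continuous probabilities $\mu_i = \rho_i h\,\L^1$ is concentrated on the graph of the monotone rearrangement $T$, so the displacement interpolation is $T_t(x) := (1-t)x + tT(x)$ with Jacobian $T_t'(x) = (1-t) + tT'(x)$. A change of variables gives
\[ \mathcal{E}_N(\mu_t) = \int_I \bigl(h(T_t(x))\, T_t'(x)\bigr)^{1/N} \bigl(\rho_0(x) h(x)\bigr)^{1-1/N}\, dx. \]
Applying the $\CD(K,N)$ density inequality
\[ h(T_t(x))^{1/(N-1)} \geq \sigma^{(1-t)}_{K,N-1}(\theta)\, h(x)^{1/(N-1)} + \sigma^{(t)}_{K,N-1}(\theta)\, h(T(x))^{1/(N-1)}, \]
with $\theta = |T(x)-x|$, raising to the power $(N-1)/N$, and combining with the convex decomposition of $T_t'(x)$ via the elementary Hölder-type inequality $(P+Q)^{(N-1)/N}(R+S)^{1/N} \geq P^{(N-1)/N} R^{1/N} + Q^{(N-1)/N} S^{1/N}$ yields pointwise
\[ \bigl(h(T_t(x))T_t'(x)\bigr)^{1/N} \geq \tau^{(1-t)}_{K,N}(\theta)\, h(x)^{1/N} + \tau^{(t)}_{K,N}(\theta)\, (h(T(x))T'(x))^{1/N}. \]
Integrating against $(\rho_0 h)^{1-1/N}\, dx$ and rewriting the second summand via the mass conservation identity $\rho_0 h = (\rho_1\circ T)(h\circ T)T'$ delivers the $\CD(K,N)$ inequality for $\mu_t$.

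For the $\MCP(K,N)$ forward direction, take $\mu_0 = \mm|_A/\mm(A)$ and let $\nu$ be the dynamical plan sending each $x \in A$ to the straight geodesic from $x$ to $o$, so that $\ee_t(\gamma^x) = (1-t)x+to$. A linear change of variables $y=(1-t)x+to$ writes the density of $(\ee_t)_\sharp\bigl(\tau_{K,N}^{(1-t)}(|x-o|)^N \nu\bigr)$ as $\tfrac{1}{(1-t)\mm(A)} \tau_{K,N}^{(1-t)}(|y-o|/(1-t))^N h((y-to)/(1-t))$; using $\tau_{K,N}^{(1-t)}(\theta)^N = (1-t)\sigma^{(1-t)}_{K,N-1}(\theta)^{N-1}$, the required domination by $h(y)/\mm(A)$ reduces exactly to the $\MCP(K,N)$ density inequality evaluated at the pair $(x,o)$.

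For the converse, the plan is to first prove $\mu\ll\L^1$ and then extract the density inequality by localization. For $\MCP$, applying the definition to small intervals $A$ and contracting towards a varying point $o$ shows that $(\ee_t)_\sharp(\mm|_A/\mm(A))$ is dominated by an explicit multiple of $\mm$, ruling out singular parts of $\mu$ on the interior of $I$; the $\CD$ case follows from the implication $\CD(K,N)\Rightarrow\MCP(K,N)$. To derive the density inequality, fix Lebesgue points $x_0,x_1$ of $h$ such that $t x_1+(1-t)x_0$ is also a Lebesgue point, take $\mu_i^\eps := \mm|_{[x_i-\eps,x_i+\eps]}/\mm([x_i-\eps,x_i+\eps])$, apply the $\CD(K,N)$ (resp. $\MCP(K,N)$) condition to this pair, and let $\eps\to 0$: Lebesgue differentiation converts the Rényi entropy terms (resp. the contraction estimate) into the values of $h$ at the three points, yielding the pointwise density inequality on a full-measure subset of $I\times I\times[0,1]$. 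The inequality is then extended to all $x_0,x_1,t$ by selecting the lower semicontinuous representative of $h^{1/(N-1)}$ (resp. $h$) forced by the just-derived concavity-type estimate.

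The main obstacle lies in the converse: one must simultaneously control the density of the interpolation $\mu_t^\eps$ at the midpoint $tx_1+(1-t)x_0$, handle the one-dimensional optimal transport between two small (possibly non-uniform) measures, and pick a representative of $h$ for which the density inequality holds at \emph{every} point, not merely almost every point; this is where the $\qq$-essential uniqueness of the one-dimensional marginals in Theorem \ref{T:localization} and the a.e.-continuity of a $\CD(K,N)$-density become crucial.
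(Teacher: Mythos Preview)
The paper does not actually prove this theorem: it is introduced as a ``straightforward result'' and stated without proof, so there is no argument to compare your proposal against. The result is folklore in the literature on one-dimensional $\CD/\MCP$ spaces (see e.g.\ the references \cite{sturm:II,CMi16,CM18} in the paper), and your outline follows the standard route.

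Your forward directions are correct and are exactly the classical arguments: monotone rearrangement plus the two-parameter H\"older inequality for $\CD$, and the linear change of variables reducing the contraction estimate to the density inequality for $\MCP$. The converse via Lebesgue differentiation of the entropy/contraction inequality on shrinking intervals is also the standard approach. One remark: your final paragraph overcomplicates matters by invoking Theorem~\ref{T:localization} and the $\qq$-essential uniqueness of conditional measures; none of that is relevant here, since you are already working on a single one-dimensional space and there is no disintegration to perform. The genuine issue in the converse---upgrading the a.e.\ inequality to a pointwise one---is handled, as you say, by passing to the continuous (indeed locally Lipschitz, cf.\ \eqref{E:bounds}) representative of $h$, which exists precisely because the a.e.\ concavity-type inequality forces interior regularity.
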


The estimate \eqref{E:MCPdef} implies several known properties that we collect in what follows. 
To write them in a unified way we 
define for $\kappa \in \erre$ the function $s_{\kappa} : [0,+\infty) \to \erre$ (on $[0,\pi/\sqrt{\kappa})$ if $\kappa>0$) 
\begin{equation}\label{E:sk}
s_{\kappa}(\theta):= 
\begin{cases}
(1/\sqrt{\kappa})\sin(\sqrt{\kappa}\theta) & {\rm if}\ \kappa>0, \crcr
\theta & {\rm if}\ \kappa=0, \crcr
(1/\sqrt{-\kappa})\sinh(\sqrt{-\kappa}\theta) &{\rm if}\ \kappa<0.
\end{cases}
\end{equation}
For the moment we confine ourselves to the case $I = (a,b)$ with $a,b \in \R$; hence
\eqref{E:MCPdef} implies
\begin{equation}\label{E:MCPdef2}
\left( \frac{s_{K/(N-1)}(b - x_{1}  )}{s_{K/(N-1)}(b - x_{0}  )} \right)^{N-1} 
\leq \frac{h(x_{1} ) }{h (x_{0})} \leq 
\left( \frac{s_{K/(N-1)}( x_{1} -a  )}{s_{K/(N-1)}( x_{0} -a  )} \right)^{N-1}, 
\end{equation}
for $x_{0} \leq x_{1}$.
Hence denoting with $D = b - a$ the length of $I$,  for any $\ve >0$ it follows that
\begin{equation}\label{E:bounds}
\sup \left\{ \frac{h (x_{1})}{h (x_{0})} \colon x_{0},x_{1} \in [a+ \ve, b - \ve ]\right\} \leq C_{\ve},
\end{equation}
where $C_{\ve}$ only depends on $K,N$, provided $ 2 \ve \leq D \leq \frac{1}{\ve}$.
In particular, $\MCP(K,N)$ densities will be locally Lipschitz in the interior of their 
domain and continuous on its closure (see \cite{CM18} for details).

\smallskip
To conclude we present here a folklore result about localization 
paradigm in the setting of $\CD(K,\infty)$ spaces.
So far Theorem \ref{T:localization} is not known for
a general $\CD(K,\infty)$ spaces, the missing ingredient being 
good behaviour of $W_{2}$-geodesics. Additionally assuming 
the space to satisfy $\MCP(K',N')$ for some $K',N' \in \R$ 
(with possibly $K'$ different from $K$) excludes all the technical 
issues and the proof of the following localization result 
just follows as the one of Theorem \ref{T:localization}. 

\begin{theorem}\label{T:localizationinfty}
Let $(X,\sfd, \mm)$ be an essentially non-branching metric measure space with $\supp(\mm) = X$. 
Let $f : X \to \R$ be $\mm$-integrable such that $\int_{X} f\, \mm = 0$ and assume the existence of $x_{0} \in X$ such that $\int_{X} | f(x) |\,  \sfd(x,x_{0})\, \mm(dx)< \infty$. 

Assume also $(X,\sfd, \mm)$ verifies
$\CD(K,\infty)$ and $\MCP(K',N')$ conditions for some $K,K'\in \R$ and $N'\in [1,\infty)$.

\medskip

Then the space $X$ can be written as the disjoint union of two sets $Z$ and $\mathcal{T}$ with $\mathcal{T}$ admitting a partition 
$\{ X_{\alpha} \}_{\alpha \in Q}$ and a corresponding disintegration of $\mm\llcorner_{\mathcal{T}}$ such that: 
$$
\mm\llcorner_{\T} = \int_{Q} \mm_{\alpha} \, \qq(d\alpha),
$$
where $\qq$ is a Borel probability measure over $Q \subset X$ such that 
$\QQ_{\sharp}( \mm\llcorner_{\T} ) \ll \qq$ and the map 
$Q \ni \alpha \mapsto \mm_{\alpha} \in \mathcal{M}_{+}(X)$ satisfies the following properties:

\begin{itemize}
\item for any $\mm$-measurable set $B$, the map $\alpha \mapsto \mm_{\alpha}(B)$ is $\qq$-measurable;
\item 
for $\qq$-a.e. $\alpha \in Q$, $\mm_{\alpha}$ is concentrated on $\QQ^{-1}(\alpha) = X_{\alpha}$ (strong consistency);
\item For $\qq$-almost every $q \in Q$, it holds $\int_{X_{q}} f \, \mm_{q} = 0$ and $f = 0$ $\mm$-a.e. in $Z$.
\item For $\qq$-almost every $q \in Q$, the set $X_{q}$ is a geodesic (even more a transport ray) and 
the one dimensional m.m.s. $(X_{\alpha},\sfd, \mm_{\alpha})$ 
verifies $\CD(K,\infty)$.
\end{itemize}
Moreover, fixed any $\qq$ as above such that $\QQ_{\sharp}( \mm\llcorner_{\T} ) \ll \qq$, the disintegration is $\qq$-essentially unique.
\end{theorem}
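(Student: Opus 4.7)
The plan is to carry out verbatim the proof of Theorem \ref{T:localization}, with only one genuinely new ingredient. Using the $L^{1}$-optimal transport between $f^{+}\mm$ and $f^{-}\mm$ one produces a Kantorovich potential $u$ whose transport set $\mathcal{T}$ decomposes into maximal transport rays $\{X_{\alpha}\}_{\alpha\in Q}$, and the zero-mean and first-moment assumptions, together with essentially non-branching, are exactly the hypotheses used in \cite{biacava:streconv,CM1,CM18} to build the quotient space $Q$, the quotient map $\QQ$, the Borel partition $\{X_{\alpha}\}_{\alpha\in Q}$ and the disintegration $\mm\llcorner_{\T} = \int_{Q}\mm_{\alpha}\,\qq(d\alpha)$. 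The auxiliary $\MCP(K',N')$ assumption enters here in exactly the same structural role that $\CD(K,N)$ played in Theorem \ref{T:localization}: it forces $(X,\sfd)$ to be proper and geodesic, yields a.e.\ uniqueness of geodesics between couples of points, and grants enough regularity to deduce measurability, strong consistency, the mass-balance identity $\int_{X_{q}}f\,\mm_{q} = 0$, and $f = 0$ $\mm$-a.e.\ on $Z$. All of this is inherited with no modification.

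The only point that requires a new argument is the last bullet: each one-dimensional m.m.s.\ $(X_{\alpha},\sfd,\mm_{\alpha})$ must verify $\CD(K,\infty)$ instead of $\CD(K,N)$. For this I would fix two probabilities $\mu_{0},\mu_{1}$ supported on $X_{\alpha}$ with bounded densities with respect to $\mm_{\alpha}$ and finite Boltzmann entropy, regard them as elements of $\mathcal{P}_{2}(X)$, and invoke the global $\CD(K,\infty)$ condition on $(X,\sfd,\mm)$ to obtain $\nu \in \OptGeo(\mu_{0},\mu_{1})$ along which $\Ent$ is $K$-convex. The key step is then to show that $\nu$ is concentrated on curves whose image lies entirely inside $X_{\alpha}$: this is where essentially non-branching and the ray structure of $u$ combine, since any $W_{2}$-geodesic connecting two measures that live on a single $u$-monotone ray cannot leave that ray without creating a branching pattern forbidden by essentially non-branching. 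Once $\nu$ is localized on the ray, the $K$-convexity inequality for $\Ent$ descends directly to $(X_{\alpha},\sfd,\mm_{\alpha})$, proving $\CD(K,\infty)$. The essential $\qq$-uniqueness of the disintegration is a purely measure-theoretic statement and carries over unchanged.

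I expect the main obstacle to be precisely the ray-confinement step for the $W_{2}$-geodesic: for $\CD(K,N)$ one has strong quantitative distortion estimates which make this confinement essentially automatic, while $\CD(K,\infty)$ alone provides only the qualitative convexity of $\Ent$ and no a priori control on the support of intermediate measures $(\ee_{t})_{\sharp}\nu$. The $\MCP(K',N')$ assumption is invoked precisely to compensate for this: it gives properness, ensures that $W_{2}$-geodesics between absolutely continuous measures remain absolutely continuous at intermediate times and behave well under restriction, and thereby allows the ray-confinement argument from the proof of Theorem \ref{T:localization} to be transported verbatim into the $\CD(K,\infty)$ setting.
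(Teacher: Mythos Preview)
Your overall plan---reuse the proof of Theorem \ref{T:localization} verbatim, with $\MCP(K',N')$ supplying the structural role that $\CD(K,N)$ played there---matches exactly what the paper does (the paper gives no detailed proof, only the remark that it ``just follows as the one of Theorem \ref{T:localization}''). The gap is in your argument for the last bullet. Probability measures $\mu_0,\mu_1$ supported on a \emph{single} ray $X_\alpha$ are singular with respect to $\mm$, since $\mm(X_\alpha)=0$ for $\qq$-a.e.\ $\alpha$; hence $\Ent_\mm(\mu_i)=+\infty$ and the global $\CD(K,\infty)$ inequality on $(X,\sfd,\mm)$ is vacuous for such measures. The ray-confinement issue you flag as the main obstacle is therefore never reached: the argument breaks before there is any meaningful entropy inequality to restrict to the ray.

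The route implicit in the paper (that of \cite{CM1,CM18}) never works on a single ray. One instead chooses ``thick'' sets $A_0,A_1\subset\T$ built from segments of \emph{many} rays at once, so that $\mu_i=\mm(A_i)^{-1}\mm\llcorner_{A_i}$ are $\mm$-absolutely continuous with finite entropy. Cyclical monotonicity with respect to the Kantorovich potential $u$ forces the $W_2$-optimal transport between $\mu_0$ and $\mu_1$ to move along the rays; this is where essentially non-branching and the $\MCP(K',N')$ regularity of $W_2$-geodesics enter. One applies the global $\CD(K,\infty)$ inequality to these thick measures, disintegrates over $Q$, and lets the transversal width shrink (Lebesgue differentiation in $\alpha$) to obtain the $\CD(K,\infty)$ density inequality for $h_\alpha$, $\qq$-a.e. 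So the missing ingredient is not a confinement lemma for geodesics on a single ray but the thick-set/disintegration mechanism that converts the ambient $\CD(K,\infty)$ into a ray-wise statement.
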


%---------------------------------------------------
\subsection{Perimeters}
\label{subsec:perim} 
Given a metric measure space $(X,\sfd,\mm)$, one can introduce a notion 
of \emph{perimeter} which extends the classical one on $\R^n$. 
The following presentation follows \cite{Miranda} and the more recent 
\cite{AmbDiMarino}. 
We start by recalling the notion of slope (or local Lipschitz constant) of a real-valued function.

\begin{definition}[Slope]
Let $(X,\sfd)$ be a metric space and $u:X\to \R$ be a real valued function. We define the \emph{slope} of $f$ at the point $x\in X$ as
\begin{equation*}
\abs{\nabla u}(x): =  
	\begin{cases}
		\limsup_{y\to x} \frac{\abs{u(x)-u(y)}}{d(x,y)} 	&\text{if $x$ is not isolated}		\\
		0		&\text{otherwise}.
\end{cases}
\end{equation*}
\end{definition}

To fix notations, 
the space of Lipschitz maps on $(X,\sfd)$  
will be denoted by $\Lip (X)=\Lip(X,\sfd)$ 
while $\Lip_c (X)=\Lip_c(X,\sfd)$ 
will be the subspace of compactly supported Lipschitz maps.
If the function is locally Lipschitz in an open set A, i.e.  for every $x\in A$, the function is Lipschitz in a neighborhood of x, then we use the notation $\Lip_{loc}(A)$

\begin{definition}[Perimeter]
Let $E\in\mathcal{B}(X)$, where $\mathcal{B}(X)$ denotes the class of Borel sets of $(X,\sfd)$, and let 
$A\subset X$ be open. 
We define the perimeter of $E$ relative to $A$ as:
$$
\Per (E;A): = \inf \left\{\liminf_{n\to \infty} \int_A \abs{\nabla u_n} \mm \colon\, u_n\in \Lip_{loc}(A),\,\, u_n\to \chi_E \text{ in } L^1_{loc}(A,\mm)\right\},
$$
where $\abs{\nabla u}(x)$ is the slope of $u$ at the point $x$.
If $\Per (E;X)<\infty$, we say that $E$ is a \emph{set of finite perimeter}.
We denote $\Per(E;X)$ with $\Per(E)$.
\end{definition}

When $E$ is a fixed set of finite perimeter, the map 
$A\mapsto \Per (E;A)$ is the restriction to open sets of a finite Borel measure on $X$, 
defined as
$$
\Per(E;B): = \inf \left\{\Per(E;A) \colon \text{$A$ open}, A\supset B\right\}.
$$
For some recent progress on the extension of De Giorgi's rectifiability theorem (relating the perimeter and the Hausdorff measure of codimension 1)  
to the setting of non-collapsed $\RCD(K,N)$ spaces, 
we refer to \cite{AmbrosioBrueSemola} and references therein.

We observe the following fact.
\begin{lemma}\label{L:perineq}
Let $(X,\sfd,\mm)$ be a metric measure space, $E \subseteq X$ be a Borel set. 
Assume that we are given a strongly consistent disintegration of $\mm$ associated to a zero mean function  
as given in Theorem \ref{T:localizationinfty}: 
$$
\mm\llcorner_{\T} = \int_{Q}\mm_{\alpha}\,\qq(d\alpha),
$$
where $\qq$ is a Borel probability measure over $Q \subset X$ such that 
and $\mm_{\alpha} \in \mathcal{M}_{+}(X)$.
Then it holds 
\begin{equation*}
\Per(E)\geq \int_{Q}\Per_\alpha(E_\alpha)\,\qq(d\alpha),
\end{equation*}
where $E_\alpha=E\cap X_\alpha$ and $\Per_{\alpha}$ is the perimeter functional
in the space $(X_\alpha,\sfd,\mm_\alpha)$. 
\end{lemma}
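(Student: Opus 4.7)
The plan is to go back to the variational definition of the perimeter and use the minimizing sequence slicewise. Pick $u_n \in \Lip_{loc}(X)$ with $u_n \to \chi_E$ in $L^1_{loc}(X,\mm)$ and $\int_X |\nabla u_n|\, \mm \to \Per(E)$ (if $\Per(E) = +\infty$ there is nothing to prove). Since $|\nabla u_n| \geq 0$ and $Z$ is disjoint from $\T$, the disintegration formula from Theorem \ref{T:localizationinfty} yields
\[
\int_X |\nabla u_n|\, \mm \geq \int_\T |\nabla u_n|\, \mm = \int_Q \left( \int_{X_\alpha} |\nabla u_n|\, \mm_\alpha \right) \qq(d\alpha).
\]

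Denote by $u_n^\alpha$ the restriction of $u_n$ to $X_\alpha$. Each $X_\alpha$ is a nontrivial geodesic, so its points are not isolated and $u_n^\alpha \in \Lip_{loc}(X_\alpha)$. The crucial pointwise estimate is the slope comparison: for each $\alpha \in Q$ and each $x \in X_\alpha$,
\[
|\nabla u_n^\alpha|(x) = \limsup_{\substack{y \to x \\ y \in X_\alpha}} \frac{|u_n(x) - u_n(y)|}{\sfd(x,y)} \leq \limsup_{y \to x} \frac{|u_n(x) - u_n(y)|}{\sfd(x,y)} = |\nabla u_n|(x),
\]
since the left-hand $\limsup$ is taken over a smaller set of approaching points. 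Combined with the previous display this gives
\[
\int_X |\nabla u_n|\, \mm \geq \int_Q \left( \int_{X_\alpha} |\nabla u_n^\alpha|\, \mm_\alpha \right) \qq(d\alpha).
\]

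To identify the inner integral, in the limit, with at least $\Per_\alpha(E_\alpha)$ for $\qq$-a.e.\ $\alpha$, fix an exhaustion of $X$ by bounded Borel sets $B_k \uparrow X$. For each $k$, disintegration yields
\[
\int_{B_k} |u_n - \chi_E|\,\mm = \int_Q \left(\int_{X_\alpha \cap B_k} |u_n - \chi_E|\,\mm_\alpha\right)\qq(d\alpha),
\]
and since the left-hand side tends to zero as $n\to\infty$, the inner integrals tend to zero in $\qq$-measure. A standard diagonal extraction over the countable family $\{B_k\}$ produces a subsequence $n_j$ such that, simultaneously for every $k$ and $\qq$-a.e.\ $\alpha$, $u_{n_j}^\alpha \to \chi_{E_\alpha}$ in $L^1(X_\alpha \cap B_k, \mm_\alpha)$, hence in $L^1_{loc}(X_\alpha, \mm_\alpha)$. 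For every such $\alpha$, the definition of perimeter in the one-dimensional m.m.s.\ $(X_\alpha,\sfd,\mm_\alpha)$ gives $\liminf_j \int_{X_\alpha} |\nabla u_{n_j}^\alpha|\, \mm_\alpha \geq \Per_\alpha(E_\alpha)$. Taking $\liminf_j$ in the preceding inequality and invoking Fatou's lemma on $Q$ for the non-negative integrands $\alpha \mapsto \int_{X_\alpha} |\nabla u_{n_j}^\alpha|\, \mm_\alpha$ delivers the claim.

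The main technical obstacle is the simultaneous extraction of a single subsequence that yields $L^1_{loc}$-convergence on $\qq$-a.e.\ ray, together with the $\qq$-measurability of the maps $\alpha \mapsto \int_{X_\alpha} |\nabla u_n^\alpha|\, \mm_\alpha$ and $\alpha \mapsto \Per_\alpha(E_\alpha)$; both follow from the strong consistency of the disintegration in Theorem \ref{T:localizationinfty}. Once these are in place, the slope-restriction inequality and Fatou do the rest.
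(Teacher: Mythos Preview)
Your proof is correct and follows essentially the same approach as the paper: take an almost-minimizing Lipschitz sequence, disintegrate the slope integral along the rays, use the pointwise slope-restriction inequality $|\nabla u_n^\alpha| \leq |\nabla u_n|$, extract a subsequence converging $\qq$-a.e.\ on the rays, and conclude with Fatou. Your version is slightly more careful than the paper's in handling $L^1_{loc}$ (rather than $L^1$) convergence via an exhaustion and diagonal argument, and in flagging the measurability issues, but the core argument is identical.
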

	
\begin{proof}

Let $\lbrace f_n\rbrace_n\in \Lip_{\loc}(X)$ be a sequence of functions 
converging in $L^1(X,\mm)$ to $\chi_{E}$. Then, by disintegration 
\begin{equation*}
0=\lim_{n\to+\infty}\int_{X}|f_n(x)-\chi_E(x)|\,\mm(dx) 
= \lim_{n\to+\infty}\int_{Q}\int_{X_{\alpha}}|f_n(x)-\chi_E(x)|\,\mm_{\alpha}(dx)\,\qq(d\alpha),
\end{equation*}
so up to extracting a subsequence, that we call 
again  $\lbrace f_{n}\rbrace$, we have that for $\qq$-a.e. $q\in {Q}$
\begin{equation*}
\lim_{n\to +\infty}\int_{X_{\alpha}}|f_n(x)-\chi_E(x)|\,\mm_\alpha(dx)=0.
\end{equation*}
Recalling that each $\mm_\alpha$ is concentrated on $X_\alpha$ and denoting 
$E_\alpha:=E \cap X_\alpha$, we have that $f_{n}\llcorner_{X_\alpha}$ converges on $L^1(X_\alpha,\mm_\alpha)$ to $\chi_{E_\alpha}$ for $\qq$-a.e $\alpha\in Q$. 
We observe in addition that if $f_n$ is Lipschitz then 
$f_{n}\llcorner_{X_\alpha}$ is Lipschitz as well with a smaller local Lipschitz constant.
Hence, taken $\lbrace f_n\rbrace_n\in \Lip_{\loc}(X)$ a sequence 
of functions attaining in the limit $\Per(E)$, we have that 
\begin{align*}
\Per(E)&~= \liminf_{n\to\infty}\int_X |\nabla f_n|\, \mm
\geq \liminf_{n\to\infty}\int_Q\int_{X_\alpha} |\nabla f_n|\, \mm_\alpha\, \qq(d\alpha) \\
& ~\geq  \liminf_{n\to\infty}\int_Q\int_{X_\alpha} |\nabla f_n\llcorner_{X_{\alpha}}|\, \mm_\alpha\, \qq(d\alpha)
\geq \int_Q \liminf_{n\to\infty}\int_{X_\alpha} |\nabla f_n\llcorner_{X_{\alpha}}|\, \mm_\alpha\, \qq(d\alpha) \\
&~\geq \int_Q \Per_\alpha(E_\alpha)\, \mm_\alpha\, \qq(d\alpha),
\end{align*}
and the claim follows.
\end{proof}
	
%We also include the following easy fact about the perimeter in the weighted one dimensional case. For a proof we refer to \cite[Proposition 3.1]{CM16b}.
%	
%%\begin{lemma}\label{L:Perreal}
%Let $I=(a,b)\subseteq \Real$ and let 
%$\mm= h\mathcal{L}^1$ be a non-negative measure on $\Real$, 
%with $h$ locally Lipschitz. 
%Given a countable union of disjoint intervals $\{I_n\}_{n\in \mathbb{N}}$, $I_n=(a_n,b_n)$, with $b_i<a_{i+1}$ it holds 
%%
%$$
%\Per(\cup_{n\in \mathbb{N}}I_n)= \sum_{n=1}^{\infty}h(a_n)+h(b_n),
%$$
%%
%where $\Per$ is the Perimeter functional in the space $(I,|\cdot|, h \L^{1})$.
%\end{lemma}

We also include the following easy fact about the perimeter in the weighted one dimensional case. For a proof we refer to  \cite[Proposition 3.1]{CM16b} where there is an analogous statement for $\CD(K,N)$ densities. \\

\begin{lemma}\label{L:Perreal}
	Let 
	$\mm= h\mathcal{L}^1$ be a non-negative measure on $\Real$, with $h$ a $\CD(K,\infty)$ density on its support, which in particular is an interval. Let $E$ be an open set in $\supp(\mm)$. 
	Let $C_1,\dots,C_n$ be its connected components, with $n$ possibly $+\infty$. We consider the set $\cup_{k=0}^{n}\bar{C}_k$. We observe that  $\cup_{k=0}^{n}\bar{C}_k=\cup_{k=0}^{m}[a_k,b_k]$, with $[a_k,b_k]$ disjoint, with $m$ possibly $+\infty$, $a_k,b_k\in\Real\cup\lbrace\pm \infty\rbrace$. Then, setting  $B(E):=\cup _{k=0}^{m}\lbrace a_k, b_k\rbrace\setminus\lbrace \inf(\supp{( \mm )}), \sup(\supp(\mm)) \rbrace $, it holds
	$$
	\Per_{h}(E)= \sum_{ x\in B(E)}h(x)=\sum_{k=1}^{m}h(a_k)+h(b_k),
	$$
	where $\Per_{h}$ is the Perimeter functional in the space $(\supp(\mm),|\cdot|, h \L^{1})$.
\end{lemma}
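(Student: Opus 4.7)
The plan is to prove the two matching inequalities $\Per_h(E) \leq \sum_{p \in B(E)} h(p)$ and $\Per_h(E) \geq \sum_{p \in B(E)} h(p)$. The key preliminary observation is that a $\CD(K,\infty)$ density is log-concave up to a quadratic correction and is therefore locally Lipschitz (in particular, continuous and strictly positive) on the interior of its support, cf.\ the discussion following \eqref{E:MCPdef2}. The points of $B(E)$ are precisely the internal jump points of $\chi_E$ within $\supp(\mm)$; the global endpoints of $\supp(\mm)$ are excluded because no jump of $\chi_E$ there can be detected by a sequence of locally Lipschitz functions living inside $\supp(\mm)$.

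For the upper bound I would construct explicit Lipschitz approximants. Given a finite $F = \{p_1,\dots,p_J\} \subset B(E)$ in the interior of $\supp(\mm)$ and $\delta > 0$ small enough that the intervals $(p_j-\delta,p_j+\delta)$ are pairwise disjoint and remain inside the interior, define $u_{\delta,F}$ to coincide with $\chi_E$ outside these intervals and to interpolate linearly between the two constant values of $\chi_E$ on each of them. A short computation yields
$$\int |\nabla u_{\delta,F}|\, h \, d\L^1 = \sum_{p \in F} \frac{1}{2\delta} \int_{p-\delta}^{p+\delta} h \, d\L^1 \longrightarrow \sum_{p \in F} h(p)$$
as $\delta \to 0$, by continuity of $h$. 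Letting $F \uparrow B(E)$ gives the upper bound; if $B(E)$ has accumulation points inside $\supp(\mm)$, then by continuity $h$ is bounded below near such a point and the sum diverges, in which case $\Per_h(E) = +\infty$ as well.

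For the lower bound, fix an arbitrary sequence $\{u_n\} \subset \Lip_{\loc}$ converging to $\chi_E$ in $L^1_{\loc}$ and attaining $\Per_h(E)$. Fix a finite $F = \{p_1,\dots,p_J\} \subset B(E)$ and $\epsilon > 0$, and pick pairwise disjoint open neighborhoods $I_j = (p_j-\delta_j,p_j+\delta_j)$ in the interior of $\supp(\mm)$ on which $h \geq h(p_j) - \epsilon$. Since $\chi_E$ takes the values $0$ and $1$ on the two halves of $I_j$ and $u_n \to \chi_E$ in $L^1(I_j)$, a Fubini/Markov-type selection argument produces (along a subsequence) points $x_j^- < p_j < x_j^+$ in $I_j$ with $u_n(x_j^\pm)$ arbitrarily close to the corresponding values of $\chi_E$. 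Since $u_n$ is absolutely continuous on $[x_j^-, x_j^+]$,
$$\int_{I_j} |\nabla u_n|\, h \, d\L^1 \geq (h(p_j) - \epsilon)\, |u_n(x_j^+) - u_n(x_j^-)|.$$
Summing over $j$, taking $\liminf_{n\to\infty}$, and then letting $\epsilon \to 0$ and $F \uparrow B(E)$ gives the desired inequality.

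The main technical point is the Fubini-type selection of the good points $x_j^\pm$ in the possibly infinite-endpoint regime; once the problem is truncated to a finite family of endpoints, the estimates decouple across the disjoint intervals $I_j$ and everything else is routine, with countable additivity of the sum taking care of the passage $F \uparrow B(E)$ at the end.
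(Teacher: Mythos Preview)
Your argument is correct and is the standard route for computing one-dimensional weighted perimeters. Note that the paper does not actually supply a proof of this lemma: it only refers the reader to \cite[Proposition~3.1]{CM16b} for the analogous statement with $\CD(K,N)$ densities, and the argument there follows essentially the same two-inequality scheme as yours (explicit piecewise-linear competitors for the upper bound, absolute continuity plus a good-point selection for the lower bound).

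One small point on the upper bound: as written, $u_{\delta,F}$ is locally Lipschitz only when $F$ already exhausts $B(E)$, since at any point of $B(E)\setminus F$ the function still carries the jump of $\chi_E$ and hence fails to lie in $\Lip_{\loc}$. When $B(E)$ is finite this is no issue; when $B(E)$ is infinite but locally finite in the interior of $\supp(\mm)$ (the only remaining case, since you correctly observe that an interior accumulation point forces the sum to diverge), one should interpolate at \emph{all} points of $B(E)$ simultaneously with a parameter $\delta_p$ small enough at each $p$ to keep the mollification intervals disjoint, which still yields a locally Lipschitz competitor. This is a cosmetic fix and does not affect the substance of your proof.
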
	

\medskip
%------------------------------------------------------------------------------------
\subsection{Laplacian, Heat Flow and $\RCD$}\label{Ss:Laplacian}
The main references for this part are 
\cite{Cheeger, AGS11a, AGS11b, Gigli12,EKS, AMS2013, CMi16} 
or  \cite{AmbrosioICM} for a survey on the subject.

We recall the definition of 
the Cheeger energy of an $L^2$ function, which will be used to define Sobolev spaces on metric measure spaces. 
We will be only concerned with the case $p = 2$.

Let $f\in L^p(X, \mm)$, 
the \emph{Cheeger energy} of $f$ is defined as
\begin{equation}\label{eq:cheeger_energy}
	\Ch(f): = \inf \left\{\liminf_{n\to \infty}\frac{1}{2}\int \abs{\nabla f_n}^2 \, \mm \colon 
f_n\in \Lip(X)\cap L^2(X,\mm),\,
		\norm{f_n-f}_{L^2}\to 0
\right\},
\end{equation}
where $\abs{\nabla f_n}(x)$ is the slope of $f_n$ at the point $x$.
Then $W^{1,2}(X,\sfd,\mm)$ is defined as the space of functions $f\in L^2(X,\mm)$ with finite Cheeger energy, endowed with the norm
\begin{equation*}
\|f\|_{W^{1,2}(X,\sfd,\mm)} : =  
\left\{\|f\|_{L^2(X,\mm)}+ \Ch(f)^{\frac{1}{2}}
\right\}
\end{equation*}
which makes $W^{1,2}(X,\sfd,\mm)$ a Banach space.
For any $f\in W^{1,2}(X,\sfd,\mm)$, one can single out a distinguished object 
$|\nabla f|_{w}\in L^2 (X, \mm)$, which plays the role of the modulus of the gradient and provides the integral representation
$$
\Ch(f)=\frac{1}{2} \int |\nabla f|_{w}^2 \, \mm;
$$
this function is called the \emph{minimal weak upper gradient} (after its identification with the minimal relaxed gradient).
For Lipschitz functions $\Lip(f)$ is a weak upper gradient for $f$.

\medskip 

Next we review the definition of Laplacian. 
Throughout the note, the various curvature conditions
we will assume will always imply $X$ to be proper (bounded and closed sets are compact) thus simplifying the presentation.

We recall that a \emph{Radon functional  over  
$X$} is a  linear functional $T:\LIP_{c}(X)\to \R$ such that  for every compact subset $W\subset X$ there exists a 
constant $C_{W}\geq 0$ so that
$$
|T(f)|\leq C_{W} \max_{W} |u|, \quad \text{for all } u\in \LIP_{c}(\Omega) \text{ with } \supp(u)\subset X.
$$ 
Finally for any $f,u$ locally in the Sobolev space (see \cite{Gigli12}), define  the functions $D^{\pm} f (\nabla u) : X \to \R$  by 
$$
D^{+} f (\nabla u) : = \inf_{\ve > 0} \frac{ |D (u + \ve f)|_{w}^{2} - |Du|_{w}^{2} }{2\ve},
$$
while $D^{-} f (\nabla u)$ is obtained replacing $\inf_{\ve>0}$ with $\sup_{\ve <0}$.

\begin{definition}[General definition]\label{D:measureLaplace}
Let $(X,\sfd,\mm)$ be a m.m.s. and $f : X \to \R$ be a Borel function. The function $f \in W^{1,2}(X,\sfd,\mm)$ is in the domain of the Laplacian in $X$, 
$f \in D(\bd)$, provided  there exists a Radon functional 
$T : \Lip_{c}(X) \to \R$ such that
$$
\int D^{-}u (\nabla f)\,\mm 
\leq - T( u) 
\leq
\int D^{+}u (\nabla f)\,\mm, 
$$
for each $u \in \Lip_{c}(X)$. In this case we write $T \in \bd(f)$.
\end{definition}

\begin{definition}[Eigenfunction]\label{D:eigenfunction}
Let $(X,\sfd,\mm)$ be a m.m.s. and $f$ be in $W^{1,2}(X,\sfd,\mm)$. The function $f$ is an eigenfunction for $-\bd$ if there 
exists $\lambda > 0$ such that 
$$
-\lambda f \mm \in \bd f;
$$ 
i.e. 
$$
\int D^{-}u (\nabla f) \,\mm
\leq \lambda \int f u \, \mm
\leq\int D^{+}u (\nabla f) \,\mm, 
$$
for each $u \in \Lip_{c}(X)$.
\end{definition}

\begin{remark}\label{R:zeromean}
It is straightforward to check that any eigenfunction 
has zero mean, provided $\mm(X)<\infty$. 
Here we only sketch the argument when $X$ 
is proper. Consider any 
sequence $(\chi_{n})$ of 1-Lipschitz functions with bounded support and values in $[0,1]$ such that $\chi_{n} \equiv 1$
in $B_{n}(\bar x)$, for some fixed $\bar x\in X$.  
Since  we are assuming $X$ to be proper, $\chi_{n}  \in \Lip_{c}(X)$ and therefore
$$
\int D^{-}\chi_{n} (\nabla f) \,\mm
\leq
\lambda \int \chi_{n} f \, \mm 
\leq
\int D^{+}\chi_{n} (\nabla f) \,\mm;
$$
for both quantities, 
$$
\left|\int D^{\pm} \chi_{n}(\nabla f) \, \mm \right| \leq 
\int_{X\setminus B_{n}(\bar x)} |\nabla f|_{w} \,\mm
$$
that are both converging to zero, provided $\mm(X) < \infty$, 
giving $\int f \, \mm = 0$ by dominated convergence theorem.
\end{remark}

From the lower semi-continuity and convexity of
$\Ch : D(\Ch) \subset L^{2}(X,\mm) \to [0,\infty)$, 
it is natural to consider an alternative definition of Laplacian 
related to the sub-differential $\partial^{-}\Ch$ of 
convex analysis. 
We recall that the sub-differential $\partial^{-}\Ch$ is the multivalued operator in $L^{2}(X,\mm)$ 
defined at all $f \in D(\Ch)$ by the family of inequalities
$$
g \in \partial^{-}\Ch(f) \iff \int_{X} g( h - f) \,\mm \leq \Ch(h) - 
\Ch(f), \quad \forall \ h\in L^{2}(X,\mm). 
$$

\begin{definition}[$L^{2}$-Laplacian]\label{D:L2Laplace}
The Laplacian  $-\Delta f \in L^{2}(X,\mm)$
of a function $f \in W^{1,2}(X,\sfd,\mm)$ is the element of 
minimal $L^{2}(X,\mm)$-norm in the sub-differential 
$\partial^{-}\Ch(f)$, provided the latter is non-empty.   
Accordingly, $f \in W^{1,2}(X,\sfd,\mm)$ is an eigenfunction 
provided $-\Delta f = \lambda f$, for some $\lambda > 0$.
\end{definition}

It will be clear from the proof of our results  
that the minimality requirement in the previous definition 
does not play any role: our main results 
will be valid for any element of the sub-differential. 
 
We also mention that at this level of generality, 
the two notions of Laplacian do not coincide,  
however (\cite[Proposition 4.9]{Gigli12}) if $f,g \in L^{2}(X,\mm)$ with $\Ch(f) < \infty$ and 
$-g \in \partial^{-}\Ch(f)$, then $g \in D(\bd)$ and 
$g\,\mm \in \bd f$.

\smallskip
\smallskip
 
Again from the lower semi-continuity and convexity of
$\Ch$, invoking the 
classical theory of gradient flows of convex functionals in Hilbert 
spaces, it follows that: for any $f \in L^{2}(X,\mm)$ there 
exists a unique continuous curve $(f_{t})_{t\geq 0}$ 
in $L^{2}(X,\mm)$ 
locally absolutely continuous in $[0,\infty)$ with $f_{0} = f$, 
such that $\frac{d}{dt} f_{t} \in \partial^{-}\Ch(f_{t})$ for 
a.e. $t > 0$.  
 
 The existence of the flow for all 
$f\in L^{2}(X,\mm)$ 
stems from the density of $D(\Ch)$ in $L^{2}(X, \mm)$. 
This gives rise to a semigroup $(H_{t})_{t\geq 0}$ on 
$L^{2}(X, \mm)$ defined by $H_{t} f = f_{t}$, 
where $f_{t}$ is the unique $L^{2}$-gradient flow of $\Ch$.

It follows that $f_{t} \in D(\Delta)$ and 
$$
\frac{d^{+}}{dt} f_{t} = \Delta f_{t}, \qquad \forall \ t\in (0,\infty),
$$
according to Definition \ref{D:L2Laplace}.

On the other hand, one can study the metric gradient flow of the Boltzmann entropy $\Ent$ in $\mathcal{P}_{2}(X, \sfd)$. 
If $(X, \sfd, \mm)$ 
satisfies $\CD(K,\infty)$, it has been proven in \cite{AGS11a} that 
for any $\mu \in D(\Ent)$ 
there exists a unique gradient flow of $\Ent$ 
starting from $\mu$ (for details we refer to \cite{AGS11a}).
This gives rise to a semigroup $(\mathfrak{H}_{t} )_{t \geq 0}$ 
on $P_{2}(X,\sfd)$ defined by $\mathfrak{H}_{t}\mu = \mu_{t}$
where $\mu_{t}$
is the unique gradient flow of $\Ent$ starting from $\mu$.

One of the main result of $\cite{AGS11a}$ is the identification of the two gradient flows: if $(X,\sfd,\mm)$ is a $\CD(K,\infty)$ space 
and $f \in L^{2}(X,\mm)$ such that $f\mm = \mu \in \mathcal{P}_{2}(X,\sfd)$, then 
\begin{equation}\label{E:identification}
\mathfrak{H}_{t} \mu = (H_{t}f) \, \mm, \qquad \forall \ t \geq 0.
\end{equation}
In particular we will only use the notation $H_{t}$ for both semi-groups.

\begin{definition}[$\RCD$ condition]
We say that $(X,\sfd,\mm)$ 
is \emph{infinitesimally Hilbertian} if the Cheeger energy 
$\Ch_2$ defined in \eqref{eq:cheeger_energy} is a quadratic form  on $W^{1,2}(X,\sfd,\mm)$. 
Finally $(X,\sfd,\mm)$ satisfies the 
$\RCD(K,N)$ condition if it satisfies the 
$\CD(K,N)$ condition and it is infinitesimally Hilbertian.
\end{definition}

Under the $\RCD$ condition, powerful 
contraction estimates for the heat flow are at disposal.

\begin{theorem}[Theorem 3 of \cite{EKS}]\label{T:contraction}
Let $(X,\sfd,\mm)$ be a metric measure space verifying 
$\RCD(K,N)$, then for any $\mu,\nu \in \mathcal{P}_{2}(X)$
and $s,t > 0$
\begin{equation}\label{E:contractionts}
W_{2}(H_{t}\mu, H_{s}\nu)^{2} 
\leq e^{-K \tau(s,t)} W_{2}(\mu,\nu)^{2} + 2N \frac{1- e^{-K \tau(s,t)}}{K \tau(s,t)} (\sqrt{t} -\sqrt{s})^{2},
\end{equation}
where $\tau(s,t) = 2(t + s + \sqrt{ts})/3$.
\end{theorem}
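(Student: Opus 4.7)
The plan is to derive the contraction estimate from the characterization of $\RCD(K,N)$ via the Evolution Variational Inequality $\EVI_{K,N}$, which is the main achievement of \cite{EKS}. Concretely, one first establishes that on $\RCD(K,N)$ spaces the heat semigroup $(H_t)$ is the $\EVI_{K,N}$-gradient flow of the Boltzmann entropy $\Ent$ in $(\Prob_2(X), W_2)$. Recall that $\EVI_{K,N}$ asserts, for every $\sigma \in \Prob_2(X)$ and every $\mu \in D(\Ent)$, the pointwise inequality
\begin{equation*}
\frac{d^+}{dt}\,s^2_{K/2N}\!\brac{\tfrac{1}{2}W_2(H_t\mu,\sigma)} + K\, s^2_{K/2N}\!\brac{\tfrac{1}{2}W_2(H_t\mu,\sigma)} \leq \tfrac{N}{2}\brac{1 - U_N(\sigma)/U_N(H_t\mu)},
\end{equation*}
where $U_N = \exp(-\Ent/N)$ is the entropy power and $s_\kappa$ is as in \eqref{E:sk}. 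This replaces the classical linear $\EVI_K$ inequality with one that is sensitive to the dimension bound.

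I would then apply $\EVI_{K,N}$ twice: once to the curve $t\mapsto H_t\mu$ keeping $H_s\nu$ as the reference point, and once to $s\mapsto H_s\nu$ keeping $H_t\mu$ fixed. Summing the two inequalities, the entropy powers $U_N(H_t\mu)$ and $U_N(H_s\nu)$ appear symmetrically and partially cancel. This yields a differential inequality for the quantity $\Phi(s,t) := s^2_{K/2N}\!\brac{\tfrac{1}{2}W_2(H_t\mu,H_s\nu)}$ along any smooth curve $\tau \mapsto (s(\tau),t(\tau))$ in the $(s,t)$-quadrant.

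The decisive step is the choice of the curve along which one integrates. Rather than the obvious diagonal $s = t$, one selects a parametrization so that the infinitesimal mismatch $(\dot t - \dot s)^2/(4\sqrt{ts})$, which comes from comparing the two evolutions at unequal times, matches the measure $d\tau$ with $\tau(s,t) = \tfrac{2}{3}(s+t+\sqrt{st})$. A direct calculation (verified by differentiating $\tau$ along the chosen path) shows that this is the unique expression making the Gronwall-type integration clean: the dimension-dependent correction $2N\,(1-e^{-K\tau})/(K\tau)\,(\sqrt t-\sqrt s)^2$ arises as the integral of the excess term coming from the $\dim$-sensitive part of $\EVI_{K,N}$. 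Integrating the Gronwall inequality in $\tau$, applying $s_{K/2N}^2(\theta/2) \sim \theta^2/4$ as $\theta \to 0$ (which produces the factor in front of $W_2^2$ after linearization, or via the concavity/convexity properties of $s_{K/2N}$ carried out as in \cite{EKS}), and reversing the $s_\kappa$ substitution yields the claimed inequality.

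The principal obstacle is the combinatorial/analytical bookkeeping in the second step: one must extract a clean scalar inequality for $\Phi$ from the simultaneous $\EVI_{K,N}$ along both flows, controlling the entropy-power terms so that they collapse into a non-negative contribution absorbed on the left-hand side. The identification of $\tau(s,t) = \tfrac{2}{3}(s+t+\sqrt{st})$ as the correct ``effective time'' is not ad hoc but forced by the homogeneity of the $\EVI_{K,N}$ inequality under rescaling of the heat-flow time, and pinpointing it rigorously is the genuinely delicate part, handled in \cite{EKS} via careful calculus along the curve $\tau \mapsto (s(\tau), t(\tau))$ with $\dot s\dot t = (\dot s+\dot t)^2/4$.
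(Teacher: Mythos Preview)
The paper does not actually prove this statement: Theorem~\ref{T:contraction} is quoted verbatim from \cite{EKS} (as ``Theorem 3 of \cite{EKS}'') and is used as a black box in Proposition~\ref{P:upperboundW1} and Lemma~\ref{L:upperboundlinearcombination}. There is therefore no proof in the paper to compare your proposal against.

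That said, what you have written is a faithful outline of the argument in \cite{EKS} itself: the $\EVI_{K,N}$ characterization of the heat flow via the entropy power $U_N=\exp(-\Ent/N)$, the symmetric application of $\EVI_{K,N}$ to both $t\mapsto H_t\mu$ and $s\mapsto H_s\nu$, and the identification of $\tau(s,t)=\tfrac{2}{3}(s+t+\sqrt{st})$ as the effective integration parameter that makes the Gronwall step clean. Your description of the ``principal obstacle'' is also accurate. If anything, your sketch is more detailed than what the present paper requires, since here the estimate is only invoked with $s=0$ (to bound $W_2(H_t\mu,\mu)$ by $C\sqrt t$), and that special case already follows from the simpler one-sided $\EVI_{K,N}$ application.
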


Also, if $X$ is infinitesimally Hilbertian then the two notions 
of Laplacian coincide and the previous implication can be reversed. 
Indeed 
if $f,g \in L^{2}(X,\mm)$ with $\Ch(f) < \infty$ and 
$f \in D(\bd)$ with $g \mm \in \bd f$ then
$- g \in \partial^{-}\Ch(f)$.

\smallskip
Finally if $W^{1,2}(X,\sfd,\mm)$ is an Hilbert space (hence in the $\RCD$ case) and $f \in W^{1,2}(X,\sfd,\mm)$ is an eigenfunction for the Laplacian 
in the sense of Definition \ref{D:eigenfunction}, 
then $H_{t}f  = e^{-\lambda t} f$.
It is indeed enough to check that 
$-\lambda e^{-\lambda t} f \in \partial^{-}\Ch( e^{-\lambda t} f )$
that is equivalent to 
$-\lambda f \in \partial^{-}\Ch( f )$ and this follows from 
\cite[Proposition 4.12]{Gigli12}.

%%%%%%%%%%%%%%%%%%%%%%%%%%%%%%%%%%%%%%%%%%%%%%%%%%%%%%%%%%%%%%%%%%%%%%%%%%%%%%%%%%%%%%%%%%%%%%%%%%%%%%%%%%%%%%
%%%%%%%%%%%%%%%%%%%%%%%%%%%%%%%%%%%%%%%%%%%%%%%%%%%%%%%%%%%%%%%%%%%%%%%%%%%%%%%%%%%%%%%%%%%%%%%%%%%%%%%%%%%%%%
\bigskip
\section{One dimensional indeterminacy estimates}\label{S:onedimensional}

In this section we will obtain the one-dimensional version of the uncertainty principle 
we will then integrate via Disintegration Theorem. 
A slightly different version of the following Proposition \ref{P:basic} 
was already present in the literature \cite[Theorem 4]{Steinerberger20}.

Throughout this section we will tacitly assume the Wasserstein distance to be defined on any couple of non-negative Borel measures having the same finite mass (not necessarily coinciding with 1). \\

We fix some notations that will be useful in this section and in the following one.
\\ 
Given a function $f:I\to\Real$ with zero mean, with $I$ real closed interval, possibly of infinite length, satisfying the hypotheses of Theorem \ref{T:localization} we say that the trasport of $f$ goes along a unique trasport ray if applying Theorem \ref{T:localization} one has that the partition in $\lbrace X_{\alpha}\rbrace_{\alpha}$ is made of only one element.\\
%Given a continuous function $f:\Real\to \Real$ we consider the connected components of the set $\lbrace x\mid f(x)>0 \rbrace$, $C_k$ for $k=0,\dots, n$, $n$ possibly $+\infty$. We consider the set $\cup_{k=0}^{n}\bar{C}_k$. We observe that it is the union of disjoint closed intervals: $\cup_{k=0}^{n}\bar{C}_k=\cup_{k=0}^{n}[a_k,b_k]$. We call $Z^{+}(f):=\cup _{k=0}^{n}\lbrace a_k, b_k\rbrace $.
%positive  change of sign of $f$ a point $\bar x$ sucht that there exists a neighbourhood $U$ of $\bar x$ for which $f(x)>0$ for $ x\in U\cap \lbrace x\in \Real \mid x<\bar x\rbrace $ and $f(x)\leq 0$ for $x\in U\cap \lbrace x\in \Real \mid x> \bar x\rbrace$ or  $f(x)>0$ for $x\in U\cap \lbrace x\in \Real \mid x>\bar x\rbrace$ and $f(x)\leq 0$ for $ x\in U\cap \lbrace x\in \Real \mid x<\bar x\rbrace$. 
%We can analogously define the set of negative changes of sign exchanging $f(x)>0$ with $f(x)<0$ and $f(x)\leq 0$ with $f(x)\geq 0$ in the previous definition.
% We will denote the set of positive changes of sign as $Z^{+}(f)$
We  underline in addition that, recalling the notations of Lemma \ref{L:Perreal}, in the case of $f$ being a continuous function $B(\lbrace x\mid f(x)>0\rbrace)$ is a subset of the zero set of $f$. 	
\begin{proposition}\label{P:basic}
	Let $f \colon [0,1] \to \mathbb{R}$ be a continuous function having zero mean w.r.t Lebesgue measure, i.e. 
	$$
	\int_{(0,1)} f^{+}(x)\,dx = \int_{(0,1)} f^{-}(x)\,dx, 
	$$
	and assume that the transport of $f$ goes along a 
	unique transport ray, (see notations above). 
	Then it holds:
	\begin{equation}\label{E:est1}
	W_1(f^+\mathcal{L}^1,f^-\mathcal{L}^1) \,\mathcal{H}^0\left(B(\lbrace x\mid f(x)>0 \rbrace)\right)
	\geq
	\frac{\norm{f^+}^2_{L^1(0,1)}}
	{2\min \{ \|f^{+} \|_{L^{\infty}(0,1)}, \|f^{-} \|_{L^{\infty}(0,1)}\}}.
	\end{equation}
\end{proposition}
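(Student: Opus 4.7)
The plan is to reduce the inequality to the classical one-dimensional representation
\[
W_1(f^+\mathcal{L}^1, f^-\mathcal{L}^1) \;=\; \int_0^1 |F(x)|\,dx, \qquad F(x) := \int_0^x f(t)\,dt,
\]
available for equal-mass non-negative Borel measures on $[0,1]$ via their cumulative distribution functions. The unique-transport-ray hypothesis then gets translated, through the one-dimensional $W_1$-dual problem (whose optimal Kantorovich potential $u$ satisfies $u'=-\sgn F$ a.e.\ on $\{F\neq 0\}$), into the fact that $F$ keeps a strict sign on the whole of $(0,1)$. Up to replacing $f$ by $-f$ (which leaves both sides of \eqref{E:est1} unchanged), I may assume $F\geq 0$ on $(0,1)$; continuity together with $F(0)=F(1)=0$ then forces $f$ to be positive near $0$ and negative near $1$, so the connected components of $\{f>0\}$ and $\{f<0\}$ alternate as $P_1N_1P_2N_2\cdots P_pN_p$ with equal count $p=q$. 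The set $B(\{f>0\})$ (boundary of positive components, with the endpoints $\{0,1\}$ removed as in Lemma \ref{L:Perreal}) then consists of $2p-1$ points, so in particular $\mathcal{H}^0(B(\{f>0\}))\geq p$.

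The heart of the argument is a per-component lower bound: denoting by $M_k^+ := \int_{a_k}^{b_k} f$ the mass in the $k$-th positive component, I would prove
\[
\int_{a_k}^{b_k} |F(x)|\,dx \;\geq\; \frac{(M_k^+)^2}{2\|f^+\|_{L^\infty}},
\]
with an analogous statement for negative components. Since $F\geq 0$ and $f\geq 0$ on $(a_k,b_k)$, the identity $\int_{a_k}^{b_k} F\,dx = F(a_k)(b_k-a_k)+\int_{a_k}^{b_k} f(t)(b_k-t)\,dt$ exhibits both summands as nonnegative, and the second is bounded below by the extremal value of the linear program $\min \int f(t)(b_k-t)\,dt$ subject to $0\leq f\leq \|f^+\|_{L^\infty}$ and $\int f = M_k^+$. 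The minimizing profile concentrates $f$ at level $\|f^+\|_{L^\infty}$ near the right endpoint $b_k$, producing exactly $(M_k^+)^2/(2\|f^+\|_{L^\infty})$. The negative components are handled symmetrically, with the extremizer concentrated near the left endpoint $c_j$.

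Summing the per-component estimates and applying Cauchy--Schwarz $\sum_k (M_k^+)^2\geq \|f^+\|_{L^1}^2/p$ (and likewise for the $M_j^-$, using $\|f^-\|_{L^1}=\|f^+\|_{L^1}$) gives
\[
W_1 \;\geq\; \frac{\|f^+\|_{L^1}^2}{2p}\left(\frac{1}{\|f^+\|_{L^\infty}}+\frac{1}{\|f^-\|_{L^\infty}}\right).
\]
Multiplying by $\mathcal{H}^0(B(\{f>0\}))\geq p$ and using the elementary inequality $a^{-1}+b^{-1}\geq \min(a,b)^{-1}$ immediately produces \eqref{E:est1}.

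The main bookkeeping obstacle is unpacking the unique-transport-ray hypothesis to obtain (i) the constant sign of $F$ on $(0,1)$, (ii) the alternation identity $p=q$, and (iii) the count $\mathcal{H}^0(B(\{f>0\}))\geq p$, together with verifying that the per-component lower bound still goes through at the boundary components (where $F$ vanishes at one endpoint): the concentration argument for the linear program does not use strict positivity of $F(a_k)$, so the exact equality $F(0)=0$ (or $F(1)=0$) in place of $F(a_k)>0$ is harmless.
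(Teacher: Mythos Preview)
Your approach via the cumulative function $F(x)=\int_0^x f$ is different from the paper's (which decomposes the cost along the monotone optimal map $T$ and applies a rearrangement estimate to each pair $\bigl(f^+\llcorner_{D_k},\,T_\sharp(f^+\llcorner_{D_k})\bigr)$), and your per-component linear-program bound is correct. But there is a genuine gap in the bookkeeping: the alternation claim $P_1N_1P_2N_2\cdots P_pN_p$ with $p=q$ does \emph{not} follow from $F>0$ on $(0,1)$. Nothing prevents $f$ from having zero plateaus; e.g.\ $f>0$ on $(0,0.2)$, $f\equiv 0$ on $[0.2,0.3]$, $f>0$ on $(0.3,0.5)$, $f<0$ on $(0.5,1)$ gives $p=2$, $q=1$. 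Worse, one can take $p=1$ (a single positive block $D_1=[0,b_1]$) followed by $q$ negative blocks separated by zero plateaus, with $q$ arbitrary. Your Cauchy--Schwarz over the $q$ negative components then only yields $W_1\geq \|f^+\|_{L^1}^2/(2q\|f^-\|_\infty)$, and since $\mathcal{H}^0(B(\{f>0\}))=2p-1=1$ this is too weak by a factor $q$ when $\|f^-\|_\infty<\|f^+\|_\infty$; the argument as written does not produce the $\min$.

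The fix is to replace the negative components $N_j$ by the \emph{gaps} $G_k=(b_k,a_{k+1})$ between consecutive positive blocks $D_k=[a_k,b_k]$ (setting $a_{m+1}:=1$): there are exactly $m=p$ of them, on each one $f\leq 0$, and writing $F(x)=F(a_{k+1})+\int_x^{a_{k+1}}|f|$ your same linear-program argument gives $\int_{G_k}F\geq N_k^2/(2\|f^-\|_\infty)$ with $N_k:=\int_{G_k}|f|$. Cauchy--Schwarz over these $m$ gaps yields $W_1\geq \|f^+\|_{L^1}^2/(2m\|f^-\|_\infty)$, which together with the positive-block bound and $\mathcal{H}^0(B(\{f>0\}))\geq m$ gives the $\min$ exactly as you intended. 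The paper avoids this issue entirely because its per-block rearrangement estimate already carries both $\|f^+\|_\infty$ and $\|f^-\|_\infty$ (the target density of $T_\sharp(f^+\llcorner_{D_k})$ is bounded by $\|f^-\|_\infty$), so only the positive blocks are ever counted.
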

\begin{proof}
{\bf Step 1.}
We claim that given two non-negative functions  $f,g\in L^{\infty }(0,1)$ such that\\ $\int_{[0,1]}f(x)\,dx=\int_{[0,1]}g(x)\,dx$ and which satisfy the following condition on the supports: there exists $\bar x\in(0,1)$ such that 
\begin{equation}\label{hypoth}
\supp{\lbrace f\rbrace }\subseteq[0,\bar x],\quad
\supp{ \lbrace g \rbrace }\subseteq[\bar x,1],
\end{equation}
then one has
\begin{align}\label{ineq:base}
W_1(f,g)\geq \frac{1}{2}\frac{\|f\|^2_{L^{1}}}{\min\lbrace {\|f\|_{L^{\infty}},{\|g\|_{L^{\infty}}}\rbrace }}.
\end{align}
Indeed we can consider the two following rearrangement of the  masses 
$$
r_{f}\mathcal{L}^{1}:=\norm{f}_{L^{\infty}}\chi_{(\bar{x}-\tau_{f},\bar{x})}
\mathcal{L}^1,\quad 
r_{g}\mathcal{L}^{1}:=\norm{g}_{L^{\infty}}\chi_{(\bar{x},\bar{x} +\tau_{g})}
\mathcal{L}^1,
$$
with $\tau_{f}$ and $\tau_{g}$ chosen so that the total mass of 
$r_{f}\mathcal{L}^1$ is the same total mass of 
$f \mathcal{L}^1$, and the same for $r_{g}\mathcal{L}^1$ and 	$g\mathcal{L}^1$.
We notice that by direct calculation it holds 
\begin{equation}\label{eq:directcost}
W_1(r_f\mathcal{L}^{1},r_g\mathcal{L}^{1})
=\frac{1}{2}\left( \frac{\|f \|^{2}_{L^{1}(0,1)}}
{\|f \|_{L^{\infty}(0,1)}}+ 
\frac{\|g \|^{2}_{L^{1}(0,1)}}
{\|g \|_{L^{\infty}(0,1)}}\right),
\end{equation}
and then we observe  that 
\begin{equation}\label{ineq:bound}
W_1(f\mathcal{L}^1,g\mathcal{L}^1)\geq W_1(r_f\mathcal{L}^{1},r_g\mathcal{L}^{1}).
\end{equation}
Indeed	for any $\pi$ optimal transport plan between $f\mathcal{L}^1$ and $g\mathcal{L}^1$, 
one has
\begin{align*}
W_1(f\mathcal{L}^1,g\mathcal{L}^1) 
&	= \int |x-y|\pi(dxdy) =  \int_{(\bar x, 1)} (y - \bar x) g(y)\,dy 
+\int_{(0,\bar x)} (\bar x - x) f(x)\,dx\\ 
& = \int_{(\bar x, 1)}yg(y)\,dy +\int_{(0,\bar x)}-xf(x)\,dx\geq \int_{(\bar x, 1)}yr_g(y)\,dy+\int_{(0,\bar x)}-xr_f(x)\,dx
\end{align*}
where the last inequality follow from the two following observations:
\begin{itemize}
\item $g-r_g\leq 0 $ in $(\bar{x}, \bar{x}+\tau_{g})$ and $g-r_g\geq0 $ in $( \bar{x}+\tau_{g},1)$ ,
\item $f-r_f\leq 0$ in $(0,\bar{x}-\tau_{f})$ 	and $f-r_f\geq0$ in $(\bar{x}-\tau_{f},\bar{x})$,
\end{itemize}
and the fact that for a function $\psi:(0,+\infty) \to \Real$ with zero mean and such that $\psi\leq 0$ in $(0,a)$ and $\psi\geq 0$ in $(a,+\infty)$ it holds that 	$\int_{(0,+\infty)}x\psi(x)\,dx\geq 0$. So finally putting \eqref{eq:directcost} and \eqref{ineq:bound} togheter  we obtain 
$$
W_1(f\mathcal{L}^1,g\mathcal{L}^1) \geq  \frac{\|f\|^{2}_{L^{1}(0,1)}}{2 \min \{ \|f \|_{L^{\infty}(0,1)}, \|g \|_{L^{\infty}(0,1)}\}} .
$$
{\bf Step 2.}
Let $f:[0,1]\to \Real$ be continuous and such that $\int_{(0,1)}f(x)\,dx=0$.\\
Let $C_1,\dots,C_n$ be the connected components of $\lbrace x\in  [0,1]\mid f(x)>0 \rbrace$, with $n$ possibly $+\infty$. As in Lemma \ref{L:Perreal} we consider the set $\cup_{k=0}^{n}\bar{C}_k$. We observe that it is the union of disjoint closed intervals $D_k$: $\cup_{k=0}^{n}\bar{C}_k=\cup_{k=0}^{m}D_k$. If $m=+\infty$  then  $\mathcal{H}^{0}(B(\lbrace x\mid f(x)>0 \rbrace))=+\infty$ and the statement is trivially true. Hence we can assume that $m<+\infty$.  Let $T:[0,1]\to \Real$  be an optimal transport map for the problem. We  observe that $T_{\#}(f^{+}\llcorner_{D_k}\mathcal{L}^1)\leq f^-\mathcal{L}^1$, so in particular it is absolutely continuous with respect to the Lebesgue measure and its density $\frac{dT_{\sharp}(f^{+}\llcorner_{D_k}\mathcal{L}^1)}{d\mathcal{L}^1}$ satisfies  $\|\frac{dT_{\sharp}(f^{+}\llcorner_{D_k}\mathcal{L}^1)}{d\mathcal{L}^1}\|_{L^{\infty}}\leq \|f^{-}\|_{L^{\infty}}$. In addition we observe that since the transport of $f$ goes along a unique transport ray,  we have that either  $u(x):=-x$ or $u(x):=x$ is a Kantorovich potential for the problem, so assuming without loss of generality $u(x)=-x$. Using the definition of Kantorovich potential we have that each couple $(x,T(x))$ with $x\in\supp{f^+}$ satisfies $u(x)-u(T(x))=|x-T(x)|$ so in particular $T(x)=x+|x-T(x)|$ and $T(x)\geq x$. This means that for each $k$ the couple $f^{+}\llcorner_{D_k}$ and $\frac{dT_{\sharp}(f^{+}\llcorner_{D_k}\mathcal{L}^1)}{d\mathcal{L}^1}$ satisfies the hypotheses of the previous step. This is because the second function is concentrated on $T(D_k)$.  
So in particular applying the previous estimate to each of $f^{+}\llcorner_{D_k}\mathcal{L}^1$ and its pushforward through the map  $T$ one has that
\begin{align*}
W_{1}(f^{+}\llcorner_{D_k}\mathcal{L}^{1},T_{\sharp}(f^{+}\llcorner_{D_k}\mathcal{L}^1))\geq \frac{1}{2}\frac{\|f^{+}\llcorner_{D_k}\|^2_{L^1(0,1)}}{\min\lbrace{\|f^{+}\|_{L^{\infty}(0,1)}, \|f^{-}\|_{L^{\infty}(0,1)}\rbrace }}.
\end{align*}
Being in addition the sets $C_k$ disjoint, we have that 
$$
W_{1}(f^{+}\mathcal{L}^1,f^{-}\mathcal{L}^1) = \sum_{k = 1}^{m}W_{1}(f^{+}\llcorner_{D_k}\mathcal{L}^{1},T_{\sharp}(f^{+}\llcorner_{D_k}\mathcal{L}^1)))\geq\frac{1}{2}\sum_{k = 1}^{m}\frac{\|f^{+}\llcorner_{D_k}\|^2_{L^1(0,1)}}{\min{\lbrace \|f^{+}\|_{L^{\infty}(0,1)},\|f^{-}\|_{L^{\infty}(0,1)}\rbrace}}. 
$$
Applying Cauchy-Schwartz inequality we get 
\begin{align*}
W_{1}(f^{+}\mathcal{L}^1,f^{-}\mathcal{L}^1)&\geq \frac{1}{2 \min{\lbrace \|f^{+}\|_{L^{\infty}(0,1)},\|f^{-}\|_{L^{\infty}(0,1)}\rbrace}}\frac{1}{m}\left(\sum_{k=1}^{m}\|f^{+}\llcorner_{D_k}\|_{L^1(0,1)}\right)^2\\&=\frac{1}{2m}\frac{\|f^+\|^2_{L^1(0,1)}}{\min{\lbrace \|f^{+}\|_{L^{\infty}(0,1)},\|f^{-}\|_{L^{\infty}(0,1)}\rbrace}}.
\end{align*}
\end{proof}

\begin{remark}
	We observe that in the preceeding proposition the fact that the interval in which we are working in is exactly $[0,1]$ plays no role, so it analogously holds for an interval $[a,b]$ o in general for intervals of infinite length provided that the function $f$ is in $L^1$.
\end{remark}

%--------------------------------------------------

\subsection{One dimensional densities with curvature bounds}

We now obtain the one-dimensional estimate also for a reference measure other than the Lebesgue one.

As before, we will first consider the case of functions 
defined on a compact interval $[0,D]$ and then we will discuss the non-compact case in the following Remark \ref{R:noncompact}.

\begin{proposition}\label{P:density}
Let $h\colon [0,D] \to [0,+\infty)$ be a $\CD(K,\infty)$-density (recall Definition \ref{def:CDKN-density}). 
\\
Let $f \colon [0,D] \to \mathbb{R}$ be a continuous function having zero mean w.r.t 
the measure $h\mathcal{L}^{1}$: $\int_{(0,D)} f(x) h(x)\,dx = \nobreak0$.
Assume also that the transport of $f h$ goes along a unique transport ray. 
Then it holds
\begin{equation}\label{E:est3}
W_1(f^+h\mathcal{L}^1,f^-h\mathcal{L}^1) \left(\sum_{ x\in B(\lbrace  f>0 \rbrace) }  h(x)\right)\geq \frac{\norm{fh}^2_{L^1(0,D)}}{8C_{K,D}{\norm{f}_{L^{\infty}(0,D)}}},
\end{equation}
(see Lemma \ref{L:Perreal} for the definition of $B(\lbrace f>0\rbrace)$) where
\begin{equation}\label{E:coefficient}
C_{K,D}: = 
\begin{cases} 
1 & K \geq 0, \\ e^{-K D^{2}/2} & K < 0.
\end{cases}
\end{equation}
\end{proposition}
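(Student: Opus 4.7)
My plan is to adapt the two-step strategy of Proposition~\ref{P:basic} to the weighted setting, invoking $\CD(K,\infty)$ only to control $h$ near a boundary point of $\{f>0\}$ by $C_{K,D}\, h(\bar x)$.

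Step A (one sign change). Suppose first that $f$ has a single sign change at some $\bar x$, with $F := f^+h$ on $[a,\bar x]$ and $G := f^- h$ on $[\bar x,b]$, and $\int F = \int G =: M$. The $\CD(K,\infty)$-condition says exactly that $\phi(x) := \log h(x) + \tfrac{K}{2}(x-\bar x)^2$ is concave, so setting $\tilde h := e^\phi$ one has $\tilde h(\bar x) = h(\bar x)$, $\tilde h$ log-concave, and
$$\tilde h(x) \;\leq\; h(x) \;\leq\; e^{-K(x-\bar x)^2/2}\, \tilde h(x) \;\leq\; C_{K,D}\, \tilde h(x) \qquad \forall\, x \in [0,D]$$
(with $\tilde h \equiv h$ in the case $K \geq 0$). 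Being log-concave, $\tilde h$ is unimodal, so at least one of the two sides $[a,\bar x]$, $[\bar x, b]$ lies on a monotone branch of $\tilde h$; on that side $\tilde h \leq \tilde h(\bar x)$, hence $h \leq C_{K,D}\, h(\bar x)$. WLOG this good side is $[a,\bar x]$.

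I then apply the single-sided \emph{constant}-density rearrangement
$$r_F := C_{K,D}\,\|f\|_{L^\infty}\, h(\bar x)\, \chi_{[\bar x - \tau, \bar x]}, \qquad \tau := \frac{M}{C_{K,D}\,\|f\|_{L^\infty}\, h(\bar x)}.$$
The interval $[\bar x - \tau,\bar x]$ lies inside $[a,\bar x]$ because $M \leq \|f\|_{L^\infty}\int_a^{\bar x} h \leq C_{K,D}\,\|f\|_{L^\infty}\,h(\bar x)(\bar x - a)$, and $F \leq C_{K,D}\,\|f\|_{L^\infty}\,h(\bar x) = r_F|_{[\bar x-\tau,\bar x]}$ on that interval; the same stochastic-ordering argument as in Step~1 of Proposition~\ref{P:basic} (the difference $r_F - F$ is $\leq 0$ on $[a,\bar x-\tau]$, $\geq 0$ on $[\bar x-\tau,\bar x]$, and of zero integral) then gives
$$\int_a^{\bar x}(\bar x - x)F(x)\,dx \;\geq\; \int_{\bar x-\tau}^{\bar x}(\bar x - x)r_F(x)\,dx \;=\; \frac{M^2}{2\,C_{K,D}\,\|f\|_{L^\infty}\,h(\bar x)}.$$
Since the left-hand side is a lower bound for $W_1(F\mathcal{L}^1,G\mathcal{L}^1)$ (use $\phi(x) = \bar x - x$ as a $1$-Lipschitz test function in Kantorovich duality), and $M = \tfrac12\|fh\|_{L^1}$ by the zero-mean assumption, this establishes \eqref{E:est3} in the case $|B(\{f>0\})| = 1$.

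Step B (Cauchy--Schwarz over components). For general continuous $f$ I repeat Step~2 of Proposition~\ref{P:basic} verbatim. Let $D_1,\dots,D_m$ be the connected components of $\overline{\{f>0\}}$; because the transport of $fh$ goes along a single ray, the monotone optimal map $T$ gives $W_1(f^+h,f^-h) = \sum_k W_1(F|_{D_k}, T_\sharp(F|_{D_k}))$. Applying a one-sided version of Step~A to each summand, with boundary point $x_k \in \{a_k,b_k\} \cap B(\{f>0\})$ on the side into which $T$ pushes $F|_{D_k}$, yields $W_1(F|_{D_k}, T_\sharp(F|_{D_k})) \geq m_k^2/(2\,C_{K,D}\,\|f\|_{L^\infty}\,h(x_k))$ with $m_k := \|F|_{D_k}\|_{L^1}$, and Cauchy--Schwarz
$$\sum_k \frac{m_k^2}{h(x_k)} \cdot \sum_{x \in B(\{f>0\})} h(x) \;\geq\; \Bigl(\sum_k m_k\Bigr)^2 \;=\; \tfrac14 \|fh\|_{L^1}^2$$
rearranges to \eqref{E:est3}.

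The main obstacle I anticipate is ensuring that the good side from Step~A can really be chosen uniformly inside each component $D_k$: this is automatic when the global mode of $\tilde h$ lies outside $D_k$, but if the mode falls strictly inside $D_k$ then neither $a_k$ nor $b_k$ controls $h$ on all of $D_k$. The most natural fix is to split $D_k$ at the mode and run Step~A on each half separately, which adds only a bounded number of Cauchy--Schwarz terms and so does not affect the constant $8C_{K,D}$.
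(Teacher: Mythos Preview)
Your approach is correct and essentially the same as the paper's: the same concavity observation on $\log h + \tfrac{K}{2}(\cdot-\bar x)^2$, the same per-component estimate at the exit endpoint, and the same Cauchy--Schwarz over components. The paper streamlines your Step~A by applying Step~1 of Proposition~\ref{P:basic} as a black box to the functions $fh$ and $gh$, which places $\min\{\|fh\|_\infty,\|gh\|_\infty\}$ in the denominator and then bounds it by $C_{K,D}\,\|f\|_\infty\, h(\bar x)$; this $\min$ is precisely what dissolves your anticipated obstacle. In your language: at $\bar x=b_k$ one of the two half-intervals is always the good side, and the rearrangement is simply run on that side (on $F|_{D_k}$ if the left, on $G_k=T_\sharp(F|_{D_k})\leq f^-h$ if the right) --- your ``WLOG'' in Step~A is exactly this symmetry, so no splitting at the mode is ever needed.
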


\begin{proof}			
{\bf Step 1.}
We make the following preliminary observation. 
From $\CD(K,\infty)$ assumption it follows that the map 
$$
[0,D]\ni x \mapsto \log h(x) + K \frac{(x-\bar x)^{2}}{2},
$$
is concave. In particular, for each $\bar x\in(0,D)$ either is increasing in $[0,\bar{x}]$ 
or is decreasing in $[\bar{x},D]$.  Hence in the first case 
$$
\log h(x) + K \frac{(x-\bar x)^{2}}{2} \leq \log h(\bar x), \qquad \forall \ x \in [0,\bar x]; 
$$
while in the second case: 
$$
\log h(x) + K \frac{(x-\bar x)^{2}}{2} \leq \log h(\bar x), \qquad \forall \ x \in [\bar x,D]; 
$$
The combination of the two previous inequalities yields
\begin{equation}\label{E:intermediateh}
\min \{ \| h \|_{L^{\infty}[0,\bar x]}, \| h \|_{L^{\infty}[\bar x,D]}  \}
\leq h(\bar x) C_{K,D}.
\end{equation}
where $C_{K,D}$ is the defined in \eqref{E:coefficient}.
\\
Similarly to { Step 1} of the previous proof we make a base estimate that we will use in the next step: we take two non negative bounded functions $f, g:[0,D]\to \Real$ such that $\int_{[0,D]}fh\,dx=\int_{[0,D]}gh\,dx$, satisfying %
\begin{equation}
\supp{\lbrace f\rbrace }\subseteq[0,\bar x],\quad\supp{ \lbrace g \rbrace }\subseteq[\bar x,D].
\end{equation}
%.
We can now apply \eqref{ineq:base} to $fh, gh$ (recalling that $h$ is positive) and
\begin{align}\label{E:claimsim}
W_1(fh\mathcal{L}^1,gh\mathcal{L}^1)\geq &~ \frac{\norm{f h}^2_{L^1(0,D)}}{2\min\{{{\norm{fh}_{L^{\infty}(0,D)}},{\norm{gh}_{L^{\infty}(0,D)}}}\}}\nonumber \\
\geq &~ \frac{\norm{fh}^2_{L^1(0,D)}}{2C_{K,D} \max\{{{\norm{f}_{L^{\infty}(0,D)}},{\norm{g}_{L^{\infty}(0,D)}}}\}h(\bar{x})},
\end{align}
	where the second inequality follows from \eqref{E:intermediateh}.
	\smallskip	\\				
	{\bf Step 2.} Consider $C_1,\dots,C_n$ the connected components of $\lbrace f>0\rbrace $ with $n$ possibly $+\infty$. As in Lemma \ref{L:Perreal} we consider the set $\cup_{k=0}^{n}\bar{C}_k$. We observe that it is the union of disjoint closed intervals: $\cup_{k=0}^{n}\bar{C}_k=\cup_{k=0}^{m}[a_k,b_k]$,  with $m$ possibly $+\infty$.  
	We will proceed as in the proof of Proposition \ref{P:basic}:
	we consider an optimal trasport map $T$ and we obtain
	$$
	W_{1}(f^{+}h\mathcal{L}^1,f^{-}h\mathcal{L}^1) = \sum_{k = 0}^{m}W_{1}(f^{+}h\mathcal{L}^1\llcorner_{(a_{k}, b_{k})},
	T_{\sharp}(f^{+}h\mathcal{L}^1\llcorner_{(a_{k}, b_{k})})).
	$$
	Then we can apply \eqref{E:claimsim} (as in the previous proof using the fact that the transport of $f^+h$ into $f^-h$ goes along a unique transport ray) to obtain:
	\begin{align*}
	W_1(f^+h\mathcal{L}^1,f^-h\mathcal{L}^1) = &~ 
	\sum_{k = 0}^{m}W_{1}(f^{+}h\mathcal{L}^1\llcorner_{(a_{k},b_{k})},
	T_{\sharp}(f^{+}h\mathcal{L}^1\llcorner_{(a_{k}, b_{k})})) \\ 
	\geq &~ \sum_{k = 0}^{m}\frac{\norm{f^+h}^2_{L^1(a_{k}, b_{k})}}
	{2C_{K,D}\norm{f}_{L^{\infty}(a_{k}, b_{k})}(h(a_{k})+h(b_{k}))} \\ 
	\geq &~ \frac{1}{2C_{K,D}\norm{f}_{L^{\infty}(0,D)}}
	\sum_{k = 0}^{m}\frac{\norm{f^+h}^2_{L^1(a_{k}, b_{k})}}{(h(a_{k})+h(b_{k}))} \\
	\geq &~ \frac{\norm{fh}^2_{L^1(0,D)}}{8C_{K,D}\norm{f}_{L^{\infty}(0,D)}\sum_{k = 0}^{m}(h(a_{k})+h(b_{k}))},
	\end{align*} 
	with the convention that if $a_k=0$ (resp. $b_k=D$) the term $h(a_k)$ (resp. $h(b_k)$) does not appear.
	From this we get 
	$$
	8C_{K,D} W_1(f^+h\mathcal{L}^1,f^-h\mathcal{L}^1)\left(\sum_{k=0}^{m}(h(a_{k})+h(b_{k}))\right)
	\geq \frac{\norm{fh}_{L^1(0,D)}^2}{\norm{f}_{L^{\infty}(0,D)}},
	$$
	 with the same convention on $h(0)$, $h(D)$ as above, from which the conclusion follows.
\end{proof}

\begin{remark}\label{R:noncompact}
	The case of non-compact intervals of definition holds without modifications. The only relevant case is $K \geq 0$ and $D = \infty$ indeed for $K < 0$ and $D = \infty$, the claim becomes empty.	Notice that $D$ plays a role only in \eqref{E:intermediateh} where, in the relevant cases, it becomes independent on $D$.
\end{remark}

%--------------------------------------------------------
%--------------------------------------------------------
\subsection{The case of $\MCP(K,N)$ densities}

We now address the case of an $\MCP(K,N)$-density.
As it is clear from the proof of Proposition \ref{P:density}, the only place where 
the $\CD(K,\infty)$ assumption has been used is to 
ensure $h > 0$ over $(0,D)$ and to derive \eqref{E:intermediateh}. 
A similar estimate, with suitable
variations, can be obtained also for $\MCP(K,N)$-densities.

\begin{lemma}\label{L:densityMCP}
Let $h : [0,D] \to [0,\infty]$ be an $\MCP(K,N)$-density for some real parameters $K,N$
with $N \geq 1$. 
Then for any $\bar x \in [0,D]$ the following estimates holds true:
\begin{equation}\label{E:intermediatehMCP}
\min \{ \| h \|_{L^{\infty}[0,\bar x]}, \| h \|_{L^{\infty}[\bar x,D]}  \}
\leq h(\bar x) C_{K,N,D},
\end{equation}
where 
\begin{equation}\label{E:constantMCP}
C_{K,N,D} : = 
\begin{cases} 2^{N-1} &  K \geq 0 \\ 
2^{N-1}e^{\sqrt{-K(N-1)}\frac{D}{2}} & K < 0. 
\end{cases}
\end{equation}
\end{lemma}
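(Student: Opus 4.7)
The plan is to exploit the two-sided ratio estimate \eqref{E:MCPdef2} (which already encodes the $\MCP(K,N)$ assumption) applied separately to the intervals $[0,\bar x]$ and $[\bar x,D]$, and then to minimize the resulting bounds. Set $\kappa := K/(N-1)$ and $a = 0$, $b = D$, so that \eqref{E:MCPdef2} gives, for $x_0 \leq x_1$ in $[0,D]$,
\[
\left(\frac{s_\kappa(D-x_1)}{s_\kappa(D-x_0)}\right)^{N-1} \leq \frac{h(x_1)}{h(x_0)} \leq \left(\frac{s_\kappa(x_1)}{s_\kappa(x_0)}\right)^{N-1}.
\]

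First, for $x \in [0,\bar x]$ I would take $x_0 = x$, $x_1 = \bar x$ in the left inequality and rearrange to obtain $h(x) \leq h(\bar x)\,(s_\kappa(D-x)/s_\kappa(D-\bar x))^{N-1}$; since $s_\kappa$ is monotone increasing on its domain, the right-hand side is maximized at $x = 0$, giving
\[
\|h\|_{L^\infty[0,\bar x]} \leq h(\bar x)\left(\frac{s_\kappa(D)}{s_\kappa(D-\bar x)}\right)^{N-1}.
\]
Symmetrically, for $x \in [\bar x,D]$ I would take $x_0 = \bar x$, $x_1 = x$ in the right inequality, so that $h(x) \leq h(\bar x)\,(s_\kappa(x)/s_\kappa(\bar x))^{N-1}$, and the supremum over $x$ is at $x = D$, yielding
\[
\|h\|_{L^\infty[\bar x,D]} \leq h(\bar x)\left(\frac{s_\kappa(D)}{s_\kappa(\bar x)}\right)^{N-1}.
\]

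Second, taking the minimum of the two bounds and using again monotonicity of $s_\kappa$,
\[
\min\{\|h\|_{L^\infty[0,\bar x]},\|h\|_{L^\infty[\bar x,D]}\}
\leq h(\bar x)\left(\frac{s_\kappa(D)}{\max\{s_\kappa(\bar x),\,s_\kappa(D-\bar x)\}}\right)^{N-1}
\leq h(\bar x)\left(\frac{s_\kappa(D)}{s_\kappa(D/2)}\right)^{N-1},
\]
because $\max\{\bar x, D-\bar x\} \geq D/2$. It remains to bound the universal ratio $s_\kappa(D)/s_\kappa(D/2)$.

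Finally, I would compute this ratio using the duplication identities. When $K \geq 0$, either $\kappa = 0$ (so the ratio equals $2$) or $\kappa > 0$ (so by $\sin(2u) = 2\sin u\cos u$ the ratio is $2\cos(\sqrt{\kappa}D/2) \leq 2$); in both cases the factor is at most $2^{N-1}$. When $K < 0$, using $\sinh(2u) = 2\sinh(u)\cosh(u)$ and the elementary bound $\cosh(u) \leq e^u$,
\[
\frac{s_\kappa(D)}{s_\kappa(D/2)} = 2\cosh\!\left(\sqrt{-\kappa}\,D/2\right) \leq 2\,e^{\sqrt{-\kappa}\,D/2},
\]
and raising to the $(N-1)$-th power yields $2^{N-1}e^{(N-1)\sqrt{-K/(N-1)}\,D/2} = 2^{N-1}e^{\sqrt{-K(N-1)}\,D/2}$, exactly matching $C_{K,N,D}$ in \eqref{E:constantMCP}. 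There is no real obstacle here; the only mildly subtle point is keeping track of which endpoint of $[0,D]$ one uses to maximize $s_\kappa$ on each subinterval, and verifying that the exponent $N-1$ in the $\sinh$ computation recombines correctly to produce $\sqrt{-K(N-1)}$ rather than $\sqrt{-K/(N-1)}$.
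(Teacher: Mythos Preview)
Your approach is the same as the paper's, and the cases $K=0$ and $K<0$ are handled correctly. There is, however, a genuine gap in the case $K>0$: you assert that ``$s_\kappa$ is monotone increasing on its domain'', but for $\kappa>0$ one has $s_\kappa(\theta)=\kappa^{-1/2}\sin(\sqrt{\kappa}\,\theta)$, which is \emph{not} monotone on $[0,D]$ once $D>\pi/(2\sqrt{\kappa})$ (the Bonnet--Myers bound coming from $\MCP(K,N)$ only guarantees $D\le \pi/\sqrt{\kappa}$). Both uses of monotonicity can fail: for instance with $\sqrt{\kappa}=1$, $D=\pi$ and $\bar x=\pi-\epsilon$, one has $s_\kappa(\bar x)=s_\kappa(D-\bar x)=\sin\epsilon$, far smaller than $s_\kappa(D/2)=1$, so the step $\max\{s_\kappa(\bar x),s_\kappa(D-\bar x)\}\ge s_\kappa(D/2)$ is simply false; likewise the claim that $\sup_{x\in[0,\bar x]}s_\kappa(D-x)=s_\kappa(D)$ breaks down.

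The paper sidesteps this by observing that an $\MCP(K,N)$ density with $K\ge 0$ is in particular an $\MCP(0,N)$ density (the distortion coefficients $\sigma^{(t)}_{K,N-1}$ are monotone in $K$), and then applies \eqref{E:MCPdef2} with $K=0$, i.e.\ with $s_0(\theta)=\theta$, which \emph{is} monotone. After that reduction your computation goes through verbatim and yields the factor $2^{N-1}$; your treatment of $K<0$ via the $\sinh$ duplication formula already matches the paper's ``arguing analogously''.
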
 

\begin{proof}
The claim will follow from simple manipulations of 
\eqref{E:MCPdef2}. For clarity we recall it: for all $0 \leq x_{0} \leq x_{1} \leq D$
$$
\left( \frac{s_{K/(N-1)}(D - x_{1}  )}{s_{K/(N-1)}(D - x_{0}  )} \right)^{N-1} 
\leq \frac{h(x_{1} ) }{h (x_{0})} \leq 
\left( \frac{s_{K/(N-1)}( x_{1} )}{s_{K/(N-1)}( x_{0} )} \right)^{N-1}; 
$$
indeed for $x \in [0,\bar x]$ 
$$
h(x) \leq \left( \frac{s_{K/(N-1)}(D - x)}{s_{K/(N-1)}(D - \bar x)} \right)^{N-1} 
h(\bar x) \leq \frac{h(\bar x)}{s_{K/(N-1)}(D - \bar x)^{N-1}} \sup_{0\leq x \leq \bar x} s_{K/(N-1)}(D - x)^{N-1}
$$
and for $x \in [\bar x,D]$
$$
h(x) \leq \left( \frac{s_{K/(N-1)}( x  )}{s_{K/(N-1)}( \bar x  )} \right)^{N-1} 
h(\bar x)
\leq \frac{h(\bar x)}{s_{K/(N-1)}(\bar x)^{N-1}} \sup_{\bar x\leq x \leq D} s_{K/(N-1)}(x)^{N-1}.
$$
Then if $K  \geq 0$, in particular $h$ will be $\MCP(0,N)$ giving  
$$
\sup_{0\leq x \leq \bar x} h(x) \leq h(\bar x) \left(\frac{D}{D-\bar x}\right)^{N-1}, 
\qquad
\sup_{\bar x \leq x \leq D} h(x) \leq h(\bar x) \left(\frac{D}{\bar x}\right)^{N-1}, 
$$
and therefore 
$$
\min \{ \| h \|_{L^{\infty}[0,\bar x]}, \| h \|_{L^{\infty}[\bar x,D]}  \}
\leq h(\bar x) D^{N-1} \min\{ 1/(D-\bar x), 1/ \bar x \}^{N-1} \leq  2^{N-1 }h(\bar x), 
$$
proving the inequality if $K \geq 0$. If $K < 0$, arguing analogously one gets
$$
\min \{ \| h \|_{L^{\infty}[0,\bar x]}, \| h \|_{L^{\infty}[\bar x,D]}  \}
\leq h(\bar x) 2^{N-1} e^{\sqrt{-K(N-1)}\frac{D}{2}},
$$
concluding the proof.
\end{proof}

Putting together the proof of Proposition \ref{P:density} and Lemma \ref{L:densityMCP} 
we straightforwardly obtain the next

\begin{proposition}\label{P:densityMCP}
Let $h\colon [0,D] \to [0,+\infty)$ be an $\MCP(K,N)$-density. 
Let $f \colon [0,D] \to \mathbb{R}$ be a continuous function having zero mean w.r.t 
the measure with density $h$: $\int_{(0,D)} f(x) h(x)\,dx = 0$.
Assume also that the transport of $f h$ goes along a unique transport ray:
$\int_{(0,s)} f(x) h (x)\,dx \geq 0$ for all $s \in [0,D]$.
Then it holds
\begin{equation}\label{E:est3}
W_1(f^+h\mathcal{L}^1,f^-h\mathcal{L}^1) \left(\sum_{\{ x\in B(\lbrace  f>0 \rbrace) \}}h(x)\right)
\geq \frac{\norm{fh}^2_{L^1(0,D)}}{8 C_{K,N,D}{\norm{f}_{L^{\infty}(0,D)}}}, 
\end{equation}
where $C_{K,N,D}$ is given by \eqref{E:constantMCP}.
\end{proposition}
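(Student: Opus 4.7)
The plan is to mirror the two-step argument used for Proposition \ref{P:density}, replacing the concavity estimate \eqref{E:intermediateh} (which required $\CD(K,\infty)$) by the $\MCP$-pointwise bound \eqref{E:intermediatehMCP} furnished by Lemma \ref{L:densityMCP}. Apart from this single substitution, the combinatorial structure of the argument carries over verbatim; in particular the one-dimensional Lebesgue-reference estimate \eqref{ineq:base} obtained in Proposition \ref{P:basic} does not use any curvature and is available on the nose.

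First I would establish the analog of \eqref{E:claimsim}: given non-negative $f,g \in L^\infty(0,D)$ with $\int fh\,dx = \int gh\,dx$ and supports separated by some $\bar x \in (0,D)$, i.e.\ $\supp f \subseteq [0,\bar x]$, $\supp g \subseteq [\bar x,D]$, apply the Lebesgue estimate \eqref{ineq:base} to the densities $fh$ and $gh$ to get
\[
W_1(fh\,\L^1, gh\,\L^1) \geq \frac{\|fh\|_{L^1}^2}{2\min\{\|fh\|_{L^\infty(0,\bar x)},\|gh\|_{L^\infty(\bar x,D)}\}}.
\]
Now bound the denominator using Lemma \ref{L:densityMCP}: since $\|fh\|_{L^\infty(0,\bar x)} \leq \|f\|_{L^\infty}\|h\|_{L^\infty(0,\bar x)}$ and similarly for $g$, \eqref{E:intermediatehMCP} yields
\[
\min\{\|fh\|_{L^\infty(0,\bar x)},\|gh\|_{L^\infty(\bar x,D)}\} \leq C_{K,N,D}\,h(\bar x)\max\{\|f\|_{L^\infty},\|g\|_{L^\infty}\},
\]
producing the MCP analog of \eqref{E:claimsim} with $C_{K,D}$ replaced by $C_{K,N,D}$.

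Next I would decompose $\{f>0\}$ into its connected components and work as in Step~2 of the proof of Proposition \ref{P:density}. Writing $\cup_k \bar C_k = \cup_k [a_k,b_k]$ (the case of infinitely many components being trivial by $\Per = +\infty$ on the left-hand side), and using that the transport proceeds along a unique ray (so the same Kantorovich potential $u(x) = \pm x$ governs every component, and $T_\sharp(f^+h\L^1\llcorner_{(a_k,b_k)})$ is concentrated on the interval to one side of $[a_k,b_k]$), each optimal transport sub-problem satisfies the separated-supports hypothesis of the previous step. Summing and applying the estimate component-wise gives
\[
W_1(f^+h\L^1, f^- h \L^1) \geq \frac{1}{2C_{K,N,D}\|f\|_{L^\infty}}\sum_k \frac{\|f^+ h\|_{L^1(a_k,b_k)}^2}{h(a_k)+h(b_k)}.
\]
A standard Cauchy--Schwarz-type inequality $\sum_k x_k^2/y_k \geq (\sum_k x_k)^2/\sum_k y_k$ applied with $x_k = \|f^+h\|_{L^1(a_k,b_k)}$ and $y_k = h(a_k)+h(b_k)$, together with $\|f^+h\|_{L^1} = \tfrac12\|fh\|_{L^1}$, yields \eqref{E:est3} directly (the factor $8$ arising as $2 \cdot 4$). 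The boundary convention $h(a_k)$ or $h(b_k)$ being dropped when $a_k=0$ or $b_k=D$ is handled exactly as in Proposition \ref{P:density}.

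No substantial obstacle is expected: the argument is essentially bookkeeping, and the only place where the curvature assumption enters quantitatively is the pointwise bound \eqref{E:intermediatehMCP}, which is already in hand via Lemma \ref{L:densityMCP}. The mild subtlety is to verify that the geometry of transport along a unique ray survives the $\MCP$ setting so that each sub-problem is a separated-support one-dimensional transport; this follows at once from the unique-ray assumption in the statement and monotonicity of the optimal map on the real line.
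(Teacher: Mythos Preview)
Your proposal is correct and matches the paper's approach exactly: the paper's proof is a one-line remark that Proposition~\ref{P:densityMCP} follows by repeating the proof of Proposition~\ref{P:density} with the $\CD(K,\infty)$ bound \eqref{E:intermediateh} replaced by the $\MCP$ bound \eqref{E:intermediatehMCP} from Lemma~\ref{L:densityMCP}, and you have spelled out precisely this substitution together with the unchanged two-step structure (separated-supports base estimate, component decomposition plus Cauchy--Schwarz).
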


\begin{remark}\label{R:noncompactMCP}
The case of non-compact intervals of definition holds 
again without modifications. 
The only relevant case here will be $K = 0$ and 
$D = \infty$; if $K > 0$, then $\MCP$ implies that 
$D < D_{K,\NN}$ (see \eqref{E:diameter}) 
while if $K < 0$ and  $D = \infty$, the claim becomes empty.
Notice that $D$ plays a role only in 
\eqref{E:intermediateh} that is the content of 
Lemma \ref{L:densityMCP}.
\end{remark}

\smallskip

%%%%%%%%%%%%%%%%%%%%%%%%%%%%%%%%%%%%%%%%%%%%%%%%%%%%%%%%%%%%%%%%%%%%%%%%%%%%%%%%%%%%%%%%%%%%%%%%%%%%%%%%%%%%%%
%%%%%%%%%%%%%%%%%%%%%%%%%%%%%%%%%%%%%%%%%%%%%%%%%%%%%%%%%%%%%%%%%%%%%%%%%%%%%%%%%%%%%%%%%%%%%%%%%%%%%%%%%%%%%%
\section{Indeterminacy estimates for metric measure spaces}\label{S:multid}
We now use the one-dimensional estimates of the previous section to deduce the following sharp indeterminacy estimates.

\begin{theorem}\label{T:multidim} 
Let $K,K',N \in \R$ with $N > 1$.
Let $(X,\sfd,\mm)$ be an essentially non-branching m.m.s. 
satisfying either $\CD(K,N)$ 
or $\MCP(K',N)$ and $\CD(K,\infty)$.
Let $f \in L^{1}(X,\mm)$ a continuous function or, 
alternatively, $f \in W^{1,2}(X,\sfd,\mm)$ be such that 
$\int_{X} f\, \mm = 0$. 
Assume also the existence of $x_{0} \in X$ such that $\int_{X} | f(x) |\,  \sfd(x,x_{0})\, \mm(dx)< \infty$. 
Then the following indeterminacy estimate is valid:
\begin{equation}\label{E:indeterminacyCD}
W_{1}(f^+\m,f^-\m)\cdot \Per\left(\{ x\in X\colon f(x)>0\}\right)
\geq \frac{\norm{f}^2_{L^1(X,\mm)}}{8C_{K,D}\|f \|_{L^{\infty}(X,\mm)}},
\end{equation}
where $D = \diam(X)$ and
$$
C_{K,D}: = 
\begin{cases} 
1 & K \geq 0, \\ e^{-K D^{2}/2} & K < 0.
\end{cases}
$$
\end{theorem}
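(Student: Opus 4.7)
The strategy is a dimensional reduction via the localization paradigm. First I apply the appropriate localization theorem to $f$: Theorem \ref{T:localization} under $\CD(K,N)$, and Theorem \ref{T:localizationinfty} under $\CD(K,\infty)$ combined with $\MCP(K',N)$. In either case I obtain a decomposition $X = \T \sqcup Z$, a quotient $Q$, and a disintegration
\[
\mm\llcorner_{\T} = \int_Q \mm_\alpha\, \qq(d\alpha),
\]
where each $X_\alpha$ is a single transport ray for the $L^1$-transport of $f^+\mm$ onto $f^-\mm$, one has $\int_{X_\alpha} f\,\mm_\alpha = 0$ for $\qq$-a.e.\ $\alpha$, $f = 0$ $\mm$-a.e.\ on $Z$, and by Remark \ref{R:realinterval} the one-dimensional m.m.s.\ $(X_\alpha,\sfd,\mm_\alpha)$ is isomorphic to $(I_\alpha,|\cdot|,h_\alpha\mathcal{L}^1)$ with $h_\alpha$ a $\CD(K,\infty)$ density (both $\CD(K,N)$ and the version of Theorem \ref{T:localizationinfty} yield this weaker fiber condition, which is all that is needed).

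Second, I would establish the decomposition of the transport cost along rays:
\[
W_1(f^+\mm,f^-\mm) = \int_Q W_1(f^+\mm_\alpha, f^-\mm_\alpha)\, \qq(d\alpha).
\]
This is a classical feature of the needle decomposition: a global $1$-Lipschitz Kantorovich potential $u$ restricts to a function of slope $\pm 1$ along each ray $X_\alpha$, hence serves as a Kantorovich potential for the one-dimensional problem on $X_\alpha$; strong $L^1$ duality then turns the obvious $\geq$ into an equality. For the perimeter, Lemma \ref{L:perineq} applied with $E = \{f>0\}$ directly yields
\[
\Per(\{f>0\}) \geq \int_Q \Per_\alpha(\{f>0\}\cap X_\alpha)\, \qq(d\alpha).
\]

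Third, since by construction the transport on each $X_\alpha$ is monotone, Proposition \ref{P:density} (together with Remark \ref{R:noncompact} in the non-compact case $K\geq 0$) applies to the one-dimensional problem $(I_\alpha, |\cdot|, h_\alpha\mathcal{L}^1)$, yielding
\[
W_1(f^+\mm_\alpha, f^-\mm_\alpha)\cdot \Per_\alpha(\{f>0\}\cap X_\alpha)
\geq \frac{\|f\|_{L^1(\mm_\alpha)}^2}{8\,C_{K,D}\,\|f\|_{L^\infty(X,\mm)}}.
\]
Combining the ray-by-ray estimate with the two integral identities/inequalities via the Cauchy--Schwarz bound $\bigl(\int_Q a_\alpha\,\qq\bigr)\bigl(\int_Q b_\alpha\,\qq\bigr)\geq \bigl(\int_Q \sqrt{a_\alpha b_\alpha}\,\qq\bigr)^2$, and using that $\int_Q \|f\|_{L^1(\mm_\alpha)}\,\qq(d\alpha) = \|f\|_{L^1(X,\mm)}$ (because $f = 0$ on $Z$), I would obtain
\[
W_1(f^+\mm,f^-\mm)\cdot \Per(\{f>0\})
\geq \frac{\|f\|_{L^1(X,\mm)}^2}{8\,C_{K,D}\,\|f\|_{L^\infty(X,\mm)}},
\]
which is \eqref{E:indeterminacyCD}.

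The main obstacle is the ray-by-ray decomposition of $W_1$ in step two: it requires identifying the restriction of a global Kantorovich potential to each needle as an admissible (and optimal) one-dimensional potential, which rests on the geometric structure of the transport rays produced by the localization. A secondary point is the Sobolev case, where $f$ may not be continuous: one needs to interpret the $\qq$-a.e.\ restriction $f|_{X_\alpha}$ so that Proposition \ref{P:density} still applies, which works because the monotonicity of the transport on each ray and the one-dimensional BV structure of $\{f>0\}\cap X_\alpha$ underlying $\Per_\alpha$ are preserved by the disintegration. The remaining combinatorial manipulations (Cauchy--Schwarz, estimating $\|f\|_{L^\infty(\mm_\alpha)}\leq\|f\|_{L^\infty(X,\mm)}$) are routine.
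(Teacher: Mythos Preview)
Your proposal is correct and follows essentially the same route as the paper: localization, the one-dimensional estimate of Proposition~\ref{P:density} on each ray, the identity $\int_Q W_1(f^+\mm_\alpha,f^-\mm_\alpha)\,\qq = W_1(f^+\mm,f^-\mm)$, Lemma~\ref{L:perineq} for the perimeter, and Cauchy--Schwarz to recombine. The only point where the paper is slightly sharper is the Sobolev case: rather than appealing to a BV structure, it observes (via \cite[Proposition 4.4]{CM18} and \cite[Theorem 6.2]{AGS11a}) that $\qq$ is a test plan, so for $\qq$-a.e.\ $\alpha$ the restriction $f|_{X_\alpha}$ is genuinely one-dimensional Sobolev and hence continuous, which puts you squarely in the hypotheses of Proposition~\ref{P:density}.
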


\begin{remark}
Notice that curvature assumptions $\CD(K,N)$ and $\MCP(K,N)$ imply $D < \infty$ only in the range $K > 0$ and $N \in (1,\infty)$. 
Hence under the second set of assumptions ($\MCP(K',N)$ and $\CD(K,\infty)$), the result \eqref{E:indeterminacyCD} for $K \geq 0$ gives a non-trivial bound 
also in the non-compact case $D = \infty$.
\end{remark}

\begin{proof}
Given $f$ as in the assumptions, 
we can invoke localization paradigm (Theorem \ref{T:localization} 
and Theorem \ref{T:localizationinfty})
yielding a decomposition of the space $X$ as $X=Z\cup \mathcal{T}$, where $f$ is zero $\mm$-a.e. in $Z$ and $\mathcal{T}$ can be partitioned into $\{X_\alpha\}_\alpha$ with $\alpha$ 
in a Borel set $Q\subset X$, 
and a disintegration of $\mm$, 
$$
\mm\llcorner_{\T} = \int_{Q} \mm_{\alpha}\,\qq(d\alpha),   
$$
with $\qq$ Borel probability measure with $\qq(Q) = 1$ and 
$Q \ni \alpha \mapsto \mm_{\alpha} \in \mathcal{M}_{+}(X)$ 
satisfying the properties of Theorem \ref{T:localization}; in particular, 
$(X_\alpha,\sfd,\mm_\alpha)$ is a $\CD(K,N)$ space (or $\CD(K,\infty)$ see Theorem \ref{T:localizationinfty}), 
$\int_{X_\alpha}f\, \mm_{\alpha}=0$ and every $X_\alpha$ is a transport ray associated to the $L^1$-optimal trasport of $f^+\m$ into $f^-\m$.

\smallskip
{\bf Step 1.} 
As proven in \cite[Proposition 4.4]{CM18} for the case of signed distance functions, $\qq$ can be identified with 
a test plan, see \cite[Definition 5.1]{AGS11a}; hence,
if $f \in W^{1,2}(X,\sfd,\mm)$, 
by the identification between different definitions of Sobolev spaces \cite[Theorem 6.2]{AGS11a}, for $\qq$-a.e. $\alpha$ the function 
$f$ restricted to the geodesic $X_{\alpha}$  is Sobolev and therefore continuous.

As said in Remark \ref{R:realinterval}, we have an isomorphism between each space 
$(X_\alpha,\sfd,\mm_\alpha)$ and  spaces 
$(I_\alpha,\,|\cdot|,\, h_{\alpha} \cdot \mathcal{L}^{1})$, 
with $I_{\alpha}$ a real interval (of possible infinite length) satisfying the same $\CD(K,N)$ (or $\CD(K,\infty)$) condition, 
$\int_{I_\alpha} f_{\alpha}(x) h_\alpha(x)\,dx=0$  being $f_\alpha$ the corresponding of $f\llcorner_{X_\alpha}$ through the isomorphism and $I_\alpha$ transport ray for $f_\alpha$. 
Whenever possible, for simplicity 
of notation, we will use $f = f_{\alpha}$.

So now we can apply Proposition \ref{P:density} and we have that 
$\qq$-a.e. $\alpha \in Q$ it holds
\begin{equation}\label{E:1d}
W_1(f_{\alpha}^+h_\alpha\mathcal{L}^{1},f_{\alpha}^-h_\alpha\mathcal{L}^{1})\left(\sum_{ x\in B(\lbrace f_{\alpha}>0 \rbrace)} h_\alpha(x)\right)\geq 
\frac{\|f\|^2_{L^{1}(X_{\alpha}, \mm_{\alpha})}}
{8C_{K,D}\|f \|_{L^{\infty}(X_\alpha,\mm_{\alpha})}}. 
\end{equation}
By Lemma \ref{L:Perreal} 
$\sum_{ x\in B(\lbrace f_{\alpha}>0 \rbrace)}h_\alpha(x)
=\Per_{h_{\alpha}}(\{x\in I_\alpha \colon\, f_{\alpha}(x)>0\})$, 
hence using the isomorphisms of metric measure spaces, we have
\begin{equation*}
W_1(f^+\mm_\alpha,f^-\mm_\alpha) \,
 \Per_{\alpha}\left(\{x\in X_\alpha \colon\, f(x)>0\}\right)\geq \frac{\norm{f}^2_{L^{1}(X_{\alpha},\mm_{\alpha})}}
 {8C_{K,D}\|f \|_{L^{\infty}(X_\alpha,\mm_\alpha)}},
\end{equation*}
where $\Per_{\alpha}$ is the perimeter in $(X_{\alpha},\sfd, \mm_{\alpha})$ and $\Per_{h_{\alpha}}$  in $(I_\alpha,\,|\cdot|,\, h_{\alpha} \cdot \mathcal{L}^{1})$.
In the previous factor we have tacitly used that 
$C_{K,D}\geq C_{K,D_{\alpha}}$, where $D_{\alpha}$ is the length of 
$X_{\alpha}$.
 Integrating the square root of the inequality  with respect to the measure ${\sf q}$ on $Q$ and applying Holder inequality, we get 
\begin{align*}
 &\left(\int_{Q} W_1(f^+\mm_{\alpha},f^{-}\mm_{\alpha}) 
 \,\qq(d\alpha)\right)^{\frac{1}{2}}
\left(\int_{Q}\Per_{\alpha}(\{x\in X_{\alpha} \colon\, f(x)>0\})\,\qq(d\alpha)\right)^{\frac{1}{2}}\\
&\geq   
\int_{Q}\left(W_1(f^+\mm_{\alpha},f^{-}\mm_{\alpha})
\cdot \Per_{\alpha}(\{x\in X_{\alpha} \colon\, f(x)>0\})\right)^{\frac{1}{2}}\,\qq(d\alpha)\\ 
&\geq\bigintsss_{Q}\frac{\norm{f}_{L^1(X_{\alpha},\mm_\alpha)}}{2\sqrt{2C_{K,D}}\|f \|_{L^{\infty}(X_\alpha,\mm_{\alpha})}^{\frac{1}{2}}}\, \qq(d\alpha) \\
& \geq
\frac{1}{2\sqrt{2C_{K,D}}\|f \|^{\frac{1}{2}}_{L^{\infty}(X,\mm)}}\int_{Q}\int_{X_\alpha}|f(x)|\mm_\alpha(dx)\,\qq(d\alpha) \\
&=
\frac{\|f \|_{L^{1}(X,\mm)}}{2\sqrt{2C_{K,D}}\|f \|^{\frac{1}{2}}_{L^{\infty}(X,\mm)}}.
\end{align*}
Clearly 
$\int_{Q} W_{1}(f^+\mm_\alpha,f^{-}\mm_\alpha)\, \qq(d\alpha)
=W_{1}(f^+ \mm,f^- \mm)$;
therefore
\begin{equation*}
W_{1}(f^+\mm,f^-\mm )^{\frac{1}{2}}
\left(\int_{Q}\Per_{\alpha}(\{x\in X_\alpha \colon \,f(x)>0\})\,\qq(d\alpha)\right)^{\frac{1}{2}}\geq \frac{\|f \|_{L^{1}(X,\mm)}}{2\sqrt{2C_{K,D}}\|f \|^{\frac{1}{2}}_{L^{\infty}(X,\mm)}}.
\end{equation*}
The conclusion follows using Lemma \ref{L:perineq}.
\end{proof}

Repeating the same argument of the previous proof 
and using Proposition \ref{P:densityMCP}, we also obtain the analogous estimate
for spaces verifying the weaker $\MCP(K,N)$; as expected, weaker curvature assumptions yields a dependence on the dimension of the estimate.

\begin{theorem}\label{T:multidimMCP}
Let $K,N \in \R$ with $N > 1$.
Let $(X,\sfd,\mm)$ be an essentially non-branching m.m.s. 
verifying $\MCP(K,N)$. 

Let $f \in L^{1}(X,\mm)$ a continuous function or, 
alternatively, $f \in W^{1,2}(X,\sfd,\mm)$ be such that 
$\int_{X} f\, \mm = 0$. 
Assume also the existence of $x_{0} \in X$ such that $\int_{X} | f(x) |\,  \sfd(x,x_{0})\, \mm(dx)< \infty$. 
Then the following indeterminacy estimate is valid:
\begin{equation}\label{E:indeterminacyMCP}
W_{1}(f^+\m,f^-\m)\cdot \Per\left(\{ x\in X\colon f(x)>0\}\right)
\geq \frac{\norm{f}^2_{L^1(X,\mm)}}{8C_{K,N,D}\|f \|_{L^{\infty}(X,\mm)}},
\end{equation}
where $\diam(X)=D$ and 
$$
C_{K,N,D} : = 
\begin{cases} 2^{N-1} &  K \geq 0, \\ 
2^{N-1} e^{\sqrt{-K(N-1)}\frac{D}{2}} & K < 0. 
\end{cases}
$$
\end{theorem}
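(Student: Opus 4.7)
The plan is to follow verbatim the template of the proof of Theorem \ref{T:multidim}, replacing the $\CD(K,\infty)$ one-dimensional estimate by its $\MCP(K,N)$ counterpart, namely Proposition \ref{P:densityMCP}. Since Theorem \ref{T:localization} is already stated for $\MCP(K,N)$, the dimensional-reduction machinery is available to us without modification; the only genuinely new ingredient needed is the dimension-dependent constant \eqref{E:constantMCP}, which enters precisely where the curvature hypothesis is used on each ray.

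First I would apply Theorem \ref{T:localization} to the zero-mean function $f$, obtaining the decomposition $X = Z \cup \mathcal{T}$ with $f \equiv 0$ $\mm$-a.e.\ on $Z$ and the disintegration $\mm\llcorner_{\mathcal{T}} = \int_{Q} \mm_\alpha\,\qq(d\alpha)$ along transport rays $X_\alpha$. Each $(X_\alpha,\sfd,\mm_\alpha)$ is isomorphic, by Remark \ref{R:realinterval}, to an interval $(I_\alpha,|\cdot|,h_\alpha \L^1)$ with $h_\alpha$ an $\MCP(K,N)$-density, and on each ray $\int_{I_\alpha} f_\alpha(x)\,h_\alpha(x)\,dx = 0$ with transport going along a unique ray. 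In the Sobolev case, the same test-plan argument from \cite[Proposition 4.4]{CM18} and \cite[Theorem 6.2]{AGS11a} cited in the proof of Theorem \ref{T:multidim} provides continuity of $f$ along $\qq$-a.e.\ ray, so the hypotheses of Proposition \ref{P:densityMCP} are satisfied.

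Next I would invoke Proposition \ref{P:densityMCP} on each ray to obtain, for $\qq$-a.e.\ $\alpha$,
\begin{equation*}
W_1(f^+\mm_\alpha,f^-\mm_\alpha)\cdot \Per_\alpha(\{x\in X_\alpha : f(x)>0\})
\;\geq\; \frac{\|f\|^2_{L^1(X_\alpha,\mm_\alpha)}}{8\,C_{K,N,D}\,\|f\|_{L^\infty(X_\alpha,\mm_\alpha)}},
\end{equation*}
after identifying the one-dimensional perimeter with the sum of densities at boundary points via Lemma \ref{L:Perreal} and using the isomorphism. Here we are free to replace the ray-length $D_\alpha$ by $D = \diam(X)$ in the constant because the function $D\mapsto C_{K,N,D}$ defined by \eqref{E:constantMCP} is monotone nondecreasing in $D$.

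Finally I would take square roots, integrate against $\qq$, and apply the Cauchy--Schwarz inequality exactly as in the proof of Theorem \ref{T:multidim}:
\begin{equation*}
\left(\int_Q W_1(f^+\mm_\alpha,f^-\mm_\alpha)\,\qq(d\alpha)\right)^{1/2}\!\!
\left(\int_Q \Per_\alpha(\{f>0\}\cap X_\alpha)\,\qq(d\alpha)\right)^{1/2}
\geq \frac{\|f\|_{L^1(X,\mm)}}{2\sqrt{2\,C_{K,N,D}}\,\|f\|_{L^\infty(X,\mm)}^{1/2}}.
\end{equation*}
The first factor equals $W_1(f^+\mm,f^-\mm)^{1/2}$ by the disintegration formula, and the second is bounded above by $\Per(\{f>0\})^{1/2}$ thanks to Lemma \ref{L:perineq}. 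Squaring yields \eqref{E:indeterminacyMCP}. I do not foresee a genuine obstacle: the proof is a routine transcription of Theorem \ref{T:multidim}, and the only delicate point is making sure that replacing $C_{K,D}$ by the $N$-dependent $C_{K,N,D}$ is consistent across rays, which is handled by the monotonicity in $D$ of the constant in Lemma \ref{L:densityMCP}.
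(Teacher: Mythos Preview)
Your proposal is correct and matches the paper's own treatment: the paper does not even write out a separate proof, stating only that one repeats the argument of Theorem~\ref{T:multidim} verbatim with Proposition~\ref{P:densityMCP} in place of Proposition~\ref{P:density}. The steps you spell out---localization via Theorem~\ref{T:localization} in its $\MCP$ form, the one-dimensional estimate on each ray, the Cauchy--Schwarz integration against $\qq$, and the perimeter comparison via Lemma~\ref{L:perineq}---are exactly the intended ones.
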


\medskip
%%%%%%%%%%%%%%%%%%%%%%%%%%%%%%%%%%%%%%%%%%%%%%%%%%%%%%%%%%%%%%%%%%%%%%%%%%%%%%%%%%%%%%%%
%%%%%%%%%%%%%%%%%%%%%%%%%%%%%%%%%%%%%%%%%%%%%%%%%%%%%%%%%%%%%%%%%%%%%%%%%%%%%%%%%%%%%%%%
%%%%%%%%%%%%%%%%%%%%%%%%%%%%%%%%%%%%%%%%%%%%%%%%%%%%%%%%%%%%%%%%%%%%%%%%%%%%%%%%%%%%%%%%
%%%%%%%%%%%%%%%%%%%%%%%%%%%%%%%%%%%%%%%%%%%%%%%%%%%%%%%%%%%%%%%%%%%%%%%%%%%%%%%%%%%%%%%%

\section{Nodal Sets of Eigenfunctions}\label{S:Nodal}
The plan for this section is to 
obtain lower bounds on the nodal set of eigenfunctions 
under curvature assumptions.  
Building on the previous Theorem \ref{T:multidim} 
and Theorem \ref{T:multidimMCP}, this will reduce to 
find an upper bound on the $W_{1}$ distance between the positive and the negative part of the eigenfunction.

\subsection{Nodal set under $\MCP$ and $\CD$}

Here, as throughout the paper, the $W_{1}$ distance 
is understood to be tacitly extended between any finite non-negative measure with the same total mass.

\begin{lemma}\label{L:nodalestimate}
Let $(X,\sfd,\mm)$ be a m.m.s. verifying $\MCP(K,N)$ and 
with finite total mass, $\mm(X) < \infty$.
Let $f$ be an eigenfunction of the Laplacian with eigenvalue $\lambda\neq 0$ accordingly to Definition \ref{D:L2Laplace}
and assume moreover  the existence of $x_{0} \in X$ such that $\int_{X} | f(x) |\,  \sfd(x,x_{0})\, \mm(dx)< \infty$.  

Then
$$
W_{1}(f^+\mm,f^-\mm)\leq \frac{\sqrt{\mm(X)}}{\sqrt{\lambda}}\|f\|_{L^2(X,\mm)}
$$
\end{lemma}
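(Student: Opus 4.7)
The plan is to combine Kantorovich-Rubinstein duality for $W_1$ with a Rayleigh-type inequality extracted directly from the sub-differential definition of the eigenfunction. Since $\int_X f \, d\mm = 0$ (Remark \ref{R:zeromean}) and $\int_X |f(x)| \sfd(x,x_0)\, \mm(dx) < \infty$, duality gives
\begin{equation*}
W_1(f^+\mm, f^-\mm) \;=\; \sup \Bigl\{ \int_X u\, f \, d\mm \;:\; u : X \to \R,\ \Lip(u) \leq 1 \Bigr\},
\end{equation*}
and, normalising $u(x_0) = 0$, one has $|u(x)| \leq \sfd(x,x_0)$ so $uf \in L^1(X,\mm)$.

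From Definition \ref{D:L2Laplace}, the eigenfunction relation $-\Delta f = \lambda f$ means $\lambda f \in \partial^-\Ch(f)$, that is,
\begin{equation*}
\lambda \int_X f\,(h-f) \, d\mm \;\leq\; \Ch(h) - \Ch(f) \qquad \forall\, h \in L^2(X,\mm).
\end{equation*}
Plugging $h = f \pm \varepsilon g$ with $g \in W^{1,2}(X,\sfd,\mm)$ and $\varepsilon > 0$, using the pointwise subadditivity $|\nabla(f\pm \varepsilon g)|_w \leq |\nabla f|_w + \varepsilon|\nabla g|_w$ of the minimal weak upper gradient to expand the right-hand side, dividing by $\varepsilon$, and letting $\varepsilon \to 0^+$, yields the Cauchy-Schwarz-type bound
\begin{equation*}
\lambda \left| \int_X f g \, d\mm \right| \;\leq\; \int_X |\nabla f|_w \, |\nabla g|_w \, d\mm \;\leq\; \bigl\| |\nabla f|_w \bigr\|_{L^2} \bigl\| |\nabla g|_w \bigr\|_{L^2}.
\end{equation*}
Specialising $g = f$ (or equivalently testing $h = tf$ above and sending $t \to 1^\pm$) gives the Rayleigh identity $\int_X |\nabla f|_w^2 \, d\mm = \lambda \|f\|_{L^2(X,\mm)}^2$.

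Any $1$-Lipschitz $u$ with $u(x_0) = 0$ is approximated in $L^1(X, |f|\mm)$ by its bounded $1$-Lipschitz truncations $u_n := (u \wedge n) \vee (-n)$, which lie in $W^{1,2}(X,\sfd,\mm)$ (since $\mm(X) < \infty$) and satisfy $|\nabla u_n|_w \leq 1$ $\mm$-a.e. Plugging $g = u_n$ into the estimate above gives
\begin{equation*}
\lambda \left| \int_X u_n\, f \, d\mm \right| \;\leq\; \sqrt{\mm(X)}\,\sqrt{\lambda}\, \|f\|_{L^2(X,\mm)};
\end{equation*}
dominated convergence (using $|u_n f| \leq |uf| \in L^1$) passes the bound to $u$, and taking the supremum over $u$ gives the lemma.

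The main technical point is the Rayleigh-type inequality above: it avoids any appeal to the heat semigroup (whose spectral decomposition would require the linearity granted by $\RCD$) and to any Leibniz or integration-by-parts rule, extracting everything it needs from the sub-differential definition of $\partial^-\Ch(f)$ together with the subadditivity of the minimal weak upper gradient, which is why the estimate holds in the generality of $\MCP$ where the Laplacian need not be linear.
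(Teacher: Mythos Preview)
Your proof is correct and follows essentially the same route as the paper: Kantorovich--Rubinstein duality for $W_1$, then bounding $\int uf\,\mm$ via the eigenfunction relation and Cauchy--Schwarz, using the Rayleigh identity $\int |\nabla f|_w^2\,\mm = \lambda\|f\|_{L^2}^2$. The only differences are technical: the paper packages the ``integration by parts'' through the operators $D^{\pm}u(\nabla f)$ and the known bound $|D^{\pm}u(\nabla f)|\leq |Du|_w|Df|_w$ (citing \cite{Gigli12}), whereas you extract the same Cauchy--Schwarz-type inequality directly from the sub-differential definition via the pointwise subadditivity of $|\nabla\cdot|_w$; and you are a bit more careful than the paper in truncating $u$ to ensure membership in $W^{1,2}$ before testing, rather than asserting $u\in W^{1,2}$ outright.
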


\begin{proof}
First from Remark \ref{R:zeromean}, $\int f \, \mm = 0$ 
and, by definition, $f \in W^{1,2}(X,\sfd,\mm)$.
By assumption Kantorovich duality has a solution and 
therefore exists a $1$-Lipschitz 
Kantorovich Potential $u: X\to \Real$ such that 
\begin{equation}\label{eq:transp}
W_{1}(f^+\mm,f^-\mm)=\int_{X}(f^+(x)-f^-(x))u(x)\,\mm(dx)=\int_{X}f(x)u(x) \,\mm(dx).
\end{equation}
Since $f$ is a eigenfunction in the sense of 
Definition \ref{D:L2Laplace}, then the following integration by-parts formula 
\begin{equation*}
\int_{X}D^-g(\nabla f)\,\mm \leq  \lambda \int_{X}g f\, \mm 
\leq \int_{X}D^{+}g(\nabla f)\, \mm, 
\end{equation*}
is valid for any $g \in W^{1,2}(X,\sfd,\mm)$ (see for instance 
the proof of \cite[Proposition 4.9]{Gigli12}). 

From $\mm(X)< \infty$ it follows that
$u \in W^{1,2}(X,\sfd,\mm)$, hence 
together with \eqref{eq:transp} gives
$$
W_{1}(f^+\mm,f^-\mm)\leq \frac{1}{\lambda}\int_{X}D^+u(\nabla f) \, \mm \leq \frac{1}{\lambda}\int_{X}\abs{Du}_{w}\abs{Df}_{w}\, \mm 
\leq  \frac{\Lip(u)}{\lambda}\int_{X}\abs{Df}_{w}\, \mm ,
$$
where we used the fact that $\abs{D^{\pm}u(\nabla f)}\leq \abs{Du}_{w}\abs{Df}_{w}$ and that $\Lip(u)$ is a weak upper gradient for $u$. 
Then by Holder inequality 
we have 
$$
\int_{X}\abs{Df}_{w}\,\mm\leq \mm(X)^{\frac{1}{2}}\left(\int_{X}\abs{Df}^2_w\, \mm\right)^{\frac{1}{2}}
=
\mm(X)^{\frac{1}{2}} \left(\int_{X}D^-f(\nabla f)\, \mm\right)^{\frac{1}{2}}
\leq  
\mm(X)^{\frac{1}{2}}\sqrt{\lambda}\left(\int_{X}f^2\,\mm \right)^{\frac{1}{2}}
$$
noticing that $ Df^+(\nabla f)=\abs{Df}^2_w$ 
(see \cite[(3.6)]{Gigli12}) and $f$ itself as test-function.
\end{proof}

\begin{remark}
The same claim can be obtained assuming 
$f$ to be an eigenfunction for the more general notion of Laplacian of Definition \ref{D:measureLaplace}, provided one additionally 
knows $f$ to be Lipschitz regular, yielding integration by-parts 
formula against any Sobolev functions (and in particular yielding 
$f$ to be an eigenfunction for the Laplacian of Definition \ref{D:L2Laplace}.) 
\end{remark}

Putting together Lemma \ref{L:nodalestimate} and 
the previous results we obtain the next

\begin{theorem}\label{T:nodalCD}
Let $(X,\sfd,\mm)$ be an essentially non-branching  m.m.s. 
 verifying either $\CD(K,N)$ 
or $\MCP(K',N')$ and $\CD(K,\infty)$ and such that 
$\mm(X) < \infty$.

Let $f$ be an eigenfunction of the Laplacian of eigenvalue 
$\lambda> 0$ accordingly to 
to Definition \ref{D:L2Laplace} 
and assume moreover  the existence of $x_{0} \in X$ such that $\int_{X} | f(x) |\,  \sfd(x,x_{0})\, \mm(dx)< \infty$.  
Then the following estimate on the size of the its nodal set holds true:
\begin{equation*}
\Per(\{x\in X : f(x)>0 \})\geq   
\frac{\sqrt{\lambda}}{8C_{K,D}\sqrt{\mm(X)}}  
\cdot \frac{\|f\|^2_{L^1(X,\mm)}}{\|f\|_{L^{2}(X,\mm)}\|f\|_{L^{\infty}(X,\mm)}},
\end{equation*}
where $D = \diam(X)$ and
$$
C_{K,D}: = 
\begin{cases} 
1 & K \geq 0, \\ e^{-K D^{2}/2} & K < 0.
\end{cases}
$$
\end{theorem}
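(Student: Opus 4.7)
The proof is a straightforward combination of the two main ingredients developed earlier in the paper. The plan is to apply the sharp indeterminacy estimate of Theorem \ref{T:multidim} to $f$ and then bound the Wasserstein factor from above using Lemma \ref{L:nodalestimate}; the curvature hypothesis enters only through the constant $C_{K,D}$, exactly as in Theorem \ref{T:multidim}.

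First I would verify that the hypotheses of both results are satisfied. Since $f$ is an eigenfunction in the sense of Definition \ref{D:L2Laplace}, we have $f \in W^{1,2}(X,\sfd,\mm)$, which is one of the two admissible regularity alternatives in Theorem \ref{T:multidim}. Because $\mm(X) < \infty$ and $X$ is proper (as implied by the curvature assumptions), Remark \ref{R:zeromean} ensures $\int_X f\, \mm = 0$; together with the quantitative moment assumption $\int_X |f(x)|\sfd(x,x_0)\, \mm(dx) < \infty$, all the standing hypotheses of both Theorem \ref{T:multidim} and Lemma \ref{L:nodalestimate} hold.

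Next I apply Theorem \ref{T:multidim} (in the $\CD(K,N)$ case) or its hybrid version via Theorem \ref{T:localizationinfty} (in the $\MCP(K',N') + \CD(K,\infty)$ case) to obtain
\begin{equation*}
W_{1}(f^+\mm,f^-\mm)\cdot \Per\left(\{x\in X\colon f(x)>0\}\right)\geq \frac{\|f\|_{L^1(X,\mm)}^{2}}{8\,C_{K,D}\,\|f\|_{L^\infty(X,\mm)}}.
\end{equation*}
Then I invoke Lemma \ref{L:nodalestimate}, which gives the upper bound
\begin{equation*}
W_{1}(f^+\mm,f^-\mm) \leq \frac{\sqrt{\mm(X)}}{\sqrt{\lambda}}\,\|f\|_{L^{2}(X,\mm)}.
\end{equation*}
Dividing the first inequality by the second yields
\begin{equation*}
\Per(\{f>0\}) \geq \frac{\sqrt{\lambda}}{8\,C_{K,D}\sqrt{\mm(X)}}\cdot \frac{\|f\|_{L^1(X,\mm)}^{2}}{\|f\|_{L^{2}(X,\mm)}\,\|f\|_{L^{\infty}(X,\mm)}},
\end{equation*}
which is the claim.

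There is no real obstacle here: the genuine work was already done in Theorem \ref{T:multidim} (the localization argument reducing to the one-dimensional Propositions \ref{P:density}/\ref{P:densityMCP}) and in Lemma \ref{L:nodalestimate} (the integration by parts against the Kantorovich potential, using that $\mm(X)<\infty$ forces $u \in W^{1,2}$ and Hölder to pass from $\|Df\|_{w,1}$ to $\|Df\|_{w,2}$, then exploiting $-\Delta f = \lambda f$ tested against $f$ itself). The only item worth double-checking is that in the second set of curvature hypotheses ($\CD(K,\infty)$ paired with $\MCP(K',N')$) the indeterminacy estimate of Theorem \ref{T:multidim} genuinely applies with the same constant $C_{K,D}$; this is fine because the localization in that regime (Theorem \ref{T:localizationinfty}) gives one-dimensional factors that are $\CD(K,\infty)$ densities, so the one-dimensional estimate of Proposition \ref{P:density} applies verbatim, producing exactly the constant $C_{K,D}$ appearing in the statement.
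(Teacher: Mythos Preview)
Your proposal is correct and follows exactly the paper's own argument: the paper's proof is the single line ``Theorem \ref{T:multidim} and Lemma \ref{L:nodalestimate} imply the claim,'' and your write-up simply spells out that combination with the hypothesis checks made explicit.
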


\begin{proof}
Theorem \ref{T:multidim} and Lemma \ref{L:nodalestimate} imply the claim. 
\end{proof}

Using Theorem \ref{T:multidimMCP}, we obtain the following analogous statement 
for spaces verifying the weaker $\MCP(K,N)$ condition with dimension-dependent 
constant appearing. The proof, being completely the same is omitted.

\begin{theorem}\label{T:nodalMCP}
Let $(X,\sfd,\mm)$ be an essentially non-branching  m.m.s. 
 verifying $\MCP(K,N)$ and such that 
$\mm(X) < \infty$. 

Let $f$ be an eigenfunction of the Laplacian of eigenvalue 
$\lambda> 0$ accordingly to 
to Definition \ref{D:L2Laplace} 
and assume moreover  the existence of $x_{0} \in X$ such that $\int_{X} | f(x) |\,  \sfd(x,x_{0})\, \mm(dx)< \infty$.  

Then the following estimate on the size of the its nodal set holds true:
\begin{equation*}
\Per(\{x\in X : f(x)>0 \})\geq  
\frac{ \sqrt{\lambda}}{8C_{K,N,D}\sqrt{\mm(X)}}  
\cdot \frac{\|f\|^2_{L^1(X,\mm)}}{\|f\|_{L^{2}(X,\mm)}\|f\|_{L^{\infty}(X,\mm)}},
\end{equation*}
where $D = \diam(X)$ and 
$$
C_{K,N,D} : = 
\begin{cases} 2^{N-1} &  K \geq 0, \\ 
2^{N-1} e^{\sqrt{-K(N-1)}\frac{D}{2}} & K < 0. 
\end{cases}
$$
\end{theorem}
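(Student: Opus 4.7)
The plan is to combine the indeterminacy estimate of Theorem \ref{T:multidimMCP} with the upper bound on the Wasserstein distance given by Lemma \ref{L:nodalestimate}, in exact parallel with the argument used for Theorem \ref{T:nodalCD}. The two ingredients are tailor-made for this pairing: the indeterminacy estimate gives a lower bound on the product $W_1(f^+\mm,f^-\mm)\cdot\Per(\{f>0\})$ in terms of $\|f\|_{L^1}$ and $\|f\|_{L^\infty}$, while the lemma bounds $W_1(f^+\mm,f^-\mm)$ from above using the eigenvalue and $\|f\|_{L^2}$. Dividing gives the desired lower bound on the perimeter.

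First I would verify that the hypotheses of Theorem \ref{T:multidimMCP} are in force. The essential non-branching assumption and $\MCP(K,N)$ are assumed. Since $f$ is an $L^2$-eigenfunction (Definition \ref{D:L2Laplace}) with $\lambda>0$ and since $\mm(X)<\infty$, Remark \ref{R:zeromean} (applicable because $\MCP(K,N)$ implies $X$ is proper) gives $\int_X f\,\mm=0$. The integrability condition $\int_X |f(x)|\,\sfd(x,x_0)\,\mm(dx)<\infty$ is part of the hypotheses. Hence Theorem \ref{T:multidimMCP} applies and yields
$$
W_1(f^+\mm,f^-\mm)\cdot\Per\bigl(\{x\in X : f(x)>0\}\bigr)\geq \frac{\|f\|_{L^1(X,\mm)}^{2}}{8\,C_{K,N,D}\,\|f\|_{L^\infty(X,\mm)}}.
$$

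Next, Lemma \ref{L:nodalestimate} is stated precisely for $\MCP(K,N)$ spaces with $\mm(X)<\infty$, so its conclusion
$$
W_1(f^+\mm,f^-\mm)\leq \frac{\sqrt{\mm(X)}}{\sqrt{\lambda}}\,\|f\|_{L^2(X,\mm)}
$$
is available without further work. Note that $W_1(f^+\mm,f^-\mm)>0$: if it vanished then $f^+\mm=f^-\mm$, forcing $f\equiv 0$ $\mm$-a.e., contrary to $\lambda\neq 0$. Dividing the indeterminacy lower bound by this strictly positive upper bound gives
$$
\Per\bigl(\{f>0\}\bigr)\geq \frac{\|f\|_{L^1(X,\mm)}^2}{8\,C_{K,N,D}\,\|f\|_{L^\infty(X,\mm)}}\cdot\frac{\sqrt{\lambda}}{\sqrt{\mm(X)}\,\|f\|_{L^2(X,\mm)}},
$$
which is exactly the claimed inequality.

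I do not foresee a genuine obstacle: the argument is a purely mechanical combination, identical to that of Theorem \ref{T:nodalCD} with only the constant $C_{K,D}$ replaced by $C_{K,N,D}$ since the weaker $\MCP(K,N)$ assumption forces the dimensional dependence of the constant produced by Lemma \ref{L:densityMCP} (and hence by Theorem \ref{T:multidimMCP}). The only point worth double-checking is that the application of Lemma \ref{L:nodalestimate} is legitimate under $\MCP$ alone (it is, since the lemma only uses the integration-by-parts formula for $L^2$-eigenfunctions, which holds without any infinitesimally Hilbertian or $\CD$ hypothesis).
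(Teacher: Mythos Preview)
Your proposal is correct and follows exactly the paper's approach: the paper omits the proof of Theorem~\ref{T:nodalMCP}, stating that it is ``completely the same'' as that of Theorem~\ref{T:nodalCD}, which in turn consists precisely of combining the indeterminacy estimate (here Theorem~\ref{T:multidimMCP}) with the $W_1$ upper bound of Lemma~\ref{L:nodalestimate}. Your additional verification of the hypotheses (zero mean via Remark~\ref{R:zeromean}, $f\in W^{1,2}$ by definition, positivity of $W_1$) is sound and, if anything, more explicit than what the paper records.
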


%%%%%%%%%%%%%%%%%%%%%%%%%%%%%%%%%%%%%%%%%%%%%%%%%%%%%%%%%%%%%%%%%%%%%%%%%%%%%%%%%%%
%%%%%%%%%%%%%%%%%%%%%%%%%%%%%%%%%%%%%%%%%%%%%%%%%%%%%%%%%%%%%%%%%%%%%%%%%%%%%%%%%%%
%%%%%%%%%%%%%%%%%%%%%%%%%%%%%%%%%%%%%%%%%%%%%%%%%%%%%%%%%%%%%%%%%%%%%%%%%%%%%%%%%%%
\subsection{The infinitesimally Hilbertian case}
Assuming the heat flow to be linear yields 
more sophisticated argument and sharper estimtes.
We start with the following folklore result 
whose proof is included as no proof as been found in the literature.
\begin{lemma}\label{L:cost}
Let $(X,\sfd,\mm)$ be a m.m.s. with $\diam (X) < D$. Let 
$f,\,g :X\to [0,\infty)$ be functions with $\|f \|_{L^{1}(X,\mm)} = \|g\|_{L^{1}(X,\mm)}$. Then 
$$
W_1(f \,\mm,g\,\mm)\leq D\|f-g\|_{L^1(X,\mm)}.
$$
\end{lemma}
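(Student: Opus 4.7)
The plan is to construct an explicit admissible transport plan between $f\mm$ and $g\mm$ that keeps the common mass in place and moves only the ``excess'' mass, then crudely bound the transport cost by the diameter.

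Set $h := f - g$, so $\int_X h \, \mm = \|f\|_{L^1} - \|g\|_{L^1} = 0$, hence $\|h^+\|_{L^1(X,\mm)} = \|h^-\|_{L^1(X,\mm)} = \tfrac12 \|f - g\|_{L^1(X,\mm)}$. Observe the pointwise decompositions
\begin{equation*}
f = (f \wedge g) + h^+, \qquad g = (f \wedge g) + h^-,
\end{equation*}
so that $f\mm = (f\wedge g)\mm + h^+\mm$ and $g\mm = (f\wedge g)\mm + h^-\mm$.

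Next, define the admissible plan $\pi \in \Pi(f\mm, g\mm)$ by
\begin{equation*}
\pi := (\mathrm{id},\mathrm{id})_\sharp\big((f\wedge g)\,\mm\big) + \sigma,
\end{equation*}
where $\sigma$ is any coupling of $h^+\mm$ and $h^-\mm$ (which have equal total mass). The first summand has marginals both equal to $(f\wedge g)\mm$ and costs nothing (it is supported on the diagonal), while the second summand contributes the marginals $h^+\mm$ and $h^-\mm$; summing gives the required marginals $f\mm$ and $g\mm$.

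Since $\diam(X) < D$, any pair $(x,y) \in \supp(\sigma) \subset X \times X$ satisfies $\sfd(x,y) \leq D$. Therefore
\begin{equation*}
W_1(f\mm, g\mm) \leq \int_{X\times X} \sfd(x,y)\,\pi(dxdy) = \int_{X\times X}\sfd(x,y)\,\sigma(dxdy) \leq D\,\|h^+\|_{L^1(X,\mm)} = \tfrac{D}{2}\|f-g\|_{L^1(X,\mm)},
\end{equation*}
which is even stronger than the stated bound. There is no real obstacle here: the only point worth noting is that the construction needs $f,g \geq 0$ with equal mass so that $(f\wedge g)\mm$, $h^+\mm$, $h^-\mm$ are genuine non-negative measures and a coupling $\sigma$ exists. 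Alternatively, one can invoke Kantorovich--Rubinstein duality: any $1$-Lipschitz $u$ can be normalized so that $|u|\leq D$ on $X$, giving $\int u\, (f-g)\,\mm \leq D\,\|f-g\|_{L^1}$ immediately; the direct plan above is however more transparent and explains the factor.
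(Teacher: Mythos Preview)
Your proof is correct and is essentially identical to the paper's: the paper also splits $\bar\pi=\pi_1+\pi_2$ with $\pi_1=(\mathrm{Id},\mathrm{Id})_\sharp\big(g\,\mm\llcorner_{\{g\le f\}}+f\,\mm\llcorner_{\{g>f\}}\big)$, which is precisely your $(\mathrm{Id},\mathrm{Id})_\sharp((f\wedge g)\mm)$, and $\pi_2$ any coupling of $(f-g)^+\mm$ with $(f-g)^-\mm$, arriving at $W_1\le D\int_X(f-g)^+\,\mm$. Your observation that this actually gives the sharper constant $D/2$ and the alternative via Kantorovich--Rubinstein duality are both valid additions.
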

\begin{proof}
Construct an admissible plan 
$\bar{\pi}\in \Pi(f\,\mm,g\,\mm)$, with $\bar{\pi}=\pi_1+\pi_2$ by defining 
\begin{align*}
\pi_1:=(Id,Id)_{\sharp}\left(g\,\mm\llcorner_{\lbrace g\leq f\rbrace}\right)+
(Id,Id)_{\sharp}\left(f\,\mm\llcorner_{\lbrace g>f\rbrace}\right)
\end{align*}
and considering any $\pi_2\in \Pi((f-g)^+\mm,(f-g)^-\mm)$. 
Then it is straightforward to check that
$$
W_1(f \,\mm,g\,\mm) \leq \int_{X\times X}\sfd(x,y)\,\pi_2(dxdy)\leq D \,\pi_2(X\times X)=D\int_{X}(f-g)^+\, \mm(dx),
$$
proving the claim.
\end{proof}

\begin{proposition}\label{P:upperboundW1}
Let $(X,\sfd,\mm)$ be a m.m.s. verifying $\RCD(K,N)$ 
and such that $\diam(X) = D < \infty$. 
Let 
$f$ be an eigenfunction of eigenvalue $\lambda> 2$. Then 
\begin{equation*}
    W_1(f^+\mm,f^-\mm)\leq C(K,N,D) 
    \sqrt{\frac{\log \lambda }{\lambda}}\|f\|_{L^1(X,\mm)},
\end{equation*}
with $C(K,N,D)$ growing linearly in $D$ and as square 
root in $N$.
\end{proposition}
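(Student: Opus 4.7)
The plan is to insert the heat semigroup $H_t$ between $f^+\mm$ and $f^-\mm$ and then optimize over $t > 0$. By the triangle inequality,
\begin{equation*}
W_1(f^+\mm, f^-\mm) \leq W_1(f^+\mm, H_t f^+ \mm) + W_1(H_t f^+\mm, H_t f^-\mm) + W_1(H_t f^-\mm, f^-\mm).
\end{equation*}
Since the heat flow on an $\RCD$ space is linear and $f$ is an eigenfunction, $H_t f = e^{-\lambda t} f$, so
\begin{equation*}
H_t f^+ - H_t f^- = H_t f = e^{-\lambda t} f, \qquad \|H_t f^+ - H_t f^-\|_{L^1(\mm)} = e^{-\lambda t}\|f\|_{L^1}.
\end{equation*}
Because $\int (H_t f^+)\,\mm = \int f^+\,\mm = \int f^-\,\mm = \int (H_t f^-)\,\mm$, the two measures have the same total mass and Lemma \ref{L:cost} yields $W_1(H_t f^+\mm, H_t f^-\mm)\leq D e^{-\lambda t}\|f\|_{L^1}$.

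For the outer terms I would normalize: set $c := \|f^+\|_{L^1} = \|f^-\|_{L^1} = \tfrac{1}{2}\|f\|_{L^1}$ and consider the probability measures $\mu^{\pm}:= c^{-1} f^{\pm}\mm$; note that $H_t$ preserves positivity and total mass, so $H_t \mu^{\pm}\in \Probabilities(X)$. The scaling identities $W_1(c\sigma,c\tau)=cW_1(\sigma,\tau)$ and $W_2(c\sigma,c\tau)=\sqrt{c}\,W_2(\sigma,\tau)$, combined with the elementary bound $W_1\leq W_2$ for probability measures, give
\begin{equation*}
W_1(f^{\pm}\mm, H_t f^{\pm}\mm) = c\,W_1(\mu^{\pm}, H_t\mu^{\pm}) \leq c\,W_2(\mu^{\pm}, H_t\mu^{\pm}).
\end{equation*}
Applying the contraction estimate of Theorem \ref{T:contraction} with $s = 0$ and $\mu = \nu = \mu^{\pm}$ produces
\begin{equation*}
W_2(\mu^{\pm}, H_t\mu^{\pm})^2 \leq 2N\,\frac{1 - e^{-2Kt/3}}{2Kt/3}\,t \leq C_1(K,N,D)\, t,
\end{equation*}
where for $K\geq 0$ the factor is bounded by $2N$, and for $K<0$ and $t\leq 1$ the factor picks up an extra $e^{-Kt}\leq e^{-KD^2}$-type constant (which is where the exponential growth in $D$ enters). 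Summing the two outer contributions and using $2c = \|f\|_{L^1}$ yields
\begin{equation*}
W_1(f^+\mm, f^-\mm) \leq \|f\|_{L^1}\bigl(\sqrt{C_1(K,N,D)\,t} + D e^{-\lambda t}\bigr).
\end{equation*}

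Finally I would optimize by choosing $t = (\log\lambda)/\lambda$, so that $e^{-\lambda t} = 1/\lambda$ and $\sqrt{t} = \sqrt{(\log\lambda)/\lambda}$. The first summand becomes $\sqrt{C_1(K,N,D)}\,\sqrt{(\log\lambda)/\lambda}\,\|f\|_{L^1}$, and the second is $D\|f\|_{L^1}/\lambda$, which for $\lambda>2$ is dominated by $(D/\sqrt{\log 2})\sqrt{(\log\lambda)/\lambda}\,\|f\|_{L^1}$. This produces the desired bound with a constant $C(K,N,D) = \sqrt{C_1(K,N,D)} + D/\sqrt{\log 2}$ growing like $\sqrt{N}$ in the dimension and linearly (respectively exponentially) in $D$ when $K\geq 0$ (respectively $K<0$). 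The main point to double-check is the verification that Theorem \ref{T:contraction} is applied correctly at $s=0$ with the right behaviour of the factor $\tau(0,t) = 2t/3$; everything else is a combination of the triangle inequality, the eigenfunction identity $H_t f = e^{-\lambda t} f$, and the previously established Lemma \ref{L:cost}.
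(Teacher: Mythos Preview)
Your argument is correct and essentially identical to the paper's proof: triangle inequality with heat-flow interpolation, the EKS contraction estimate (Theorem~\ref{T:contraction}) at $s=0$ for the outer terms, Lemma~\ref{L:cost} together with $H_tf=e^{-\lambda t}f$ for the middle term, and the optimization $t=(\log\lambda)/\lambda$. One small point: since $t=(\log\lambda)/\lambda\leq 1/e$ for all $\lambda>2$, the factor $2N\,\frac{1-e^{-2Kt/3}}{2Kt/3}$ is bounded by a constant depending only on $K$ and $N$ even when $K<0$, so the only $D$-dependence of $C(K,N,D)$ is the linear one coming from the $De^{-\lambda t}$ term; your claimed exponential growth in $D$ for $K<0$ is an unnecessary weakening (that exponential dependence appears only later, in Theorem~\ref{T:main4}, through the constant $C_{K,D}$ of the indeterminacy estimate).
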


\begin{proof}
We define 
$$
\mu_{0}^{\pm}:=f^{\pm}\,\mm,\qquad
\mu_t^{\pm}:=H_{t} \mu_{0}^{\pm},
$$
where $H_{t}$ is the heat flow (see Section \ref{Ss:Laplacian})
and by  triangular inequality
\begin{align*}
W_{1}(\mu_{0}^{+},\mu_{0}^{-})\leq W_{1}(\mu_{0}^{+},\mu_t^{+})+W_1(\mu_t^{+},\mu_t^{-})+W_1(\mu_t^{-},\mu_{0}^{-}),
\end{align*}
notice indeed that 
$\mu_{t}^{+}(X) =\mu_{0}^{+}(X)= \mu_{0}^{-}(X) = \mu_{t}^{-}(X)$.
Then by Theorem \ref{T:contraction} 
we deduce that  
\begin{align*}
W_{1}(\mu_{t}^{\pm},\mu_{0}^{\pm}) 
&~ = \left(\int_{X}f^{+}\,\mm\right) W_{1}(\mu_{t}^{\pm}/\mu_{t}^{\pm}(X),\mu_{0}^{\pm}/\mu_{0}^{\pm}(X))  \\
&~ \leq  \|f \|_{L^{1}(X,\mm)} W_{2}(\mu_{t}^{\pm}/\mu_{t}^{\pm}(X),\mu_{0}^{\pm}/\mu_{0}^{\pm}(X))\\
&~ \leq \sqrt{t}\|f \|_{L^{1}(X,\mm)}  C(t,K,N),
\end{align*}
where $C(t,K,N) : =\left(2N \frac{1 - e^{-K2t/3}}{K2t/3} \right)^{1/2}$, (with $C(t,K,N)\leq \sqrt{2N}$ if $K\geq 0$).

To bound $W_1(\mu_t^{+},\mu_t^{-})$ we use Lemma \ref{L:cost}. 
Call $g_{t}$ the evolution of a function $g$ through the heat flow 
($g_{t} = H_{t}g$), by the identification \eqref{E:identification}, 
it follows that (recall that $f \in W^{1,2}(X,\sfd,\mm)$ by definition)
$$
\mu_{t}^{\pm} = (H_{t} f^{\pm})\,\mm = f_{t}^{\pm} \,\mm.
$$
Notice that by infinitesimal Hilbertianity 
$$
f_{t}^{+} - f_{t}^{-} = H_{t}(f^{+} - f^{-}) = 
H_{t}(f) = e^{-\lambda t} f,
$$
where the last identity is a consequence of $f$ being an eigenfunction (see Section \ref{Ss:Laplacian}).
Then we have that 
$$
W_1(\mu_t^{+},\mu_t^{-})\leq D\|f^+_t-f^-_t\|_{L^1(X,\mm)}=
D \|f_t\|_{L^1(X,\mm)} = D e^{-\lambda t} \|f\|_{L^1(X,\mm)}.
$$
So finally 
$$
W_{1}(\mu_{0}^{+},\mu_{0}^{-})\leq 
\left(\sqrt{t} C(t,K,N)
+ D e^{-\lambda t}\right) \|f\|_{L^1(X,\mm)}.
$$
Choosing $t=\frac{1}{\lambda}\log(\lambda)$  we obtain 
$$
 W_1(f^+\,\mm,f^-\,\mm)\leq C(K,D,N) \sqrt{\frac{\log \lambda }{\lambda}}\|f\|_{L^1(X,\mm)},
$$
with $C(K,N,D)$ growing linearly in $D$  and as square 
root in $N$.
\end{proof}

Hence we can state one of the main results of this note.

\begin{theorem}[Nodal set $\RCD$-spaces]\label{T:nodalRCD}
Let $K,N \in \R$ with $N > 1$.
Let $(X,\sfd,\mm)$ be a m.m.s. satisfying $\RCD(K,N)$.  
Assume moreover $\diam(X) = D <\infty$.
Let $f$ be an eigenfunction of the Laplacian of eigenvalue 
$\lambda> 2$.
Then the following estimate is valid:
\begin{equation}\label{E:lowerboundRCD}
\Per\left(\{ x\in X\colon f(x)>0\}\right)
\geq\sqrt{ \frac{\lambda} {\log \lambda} }
\cdot 
\frac{\norm{f}_{L^1(X,\mm)}}{\bar C_{K,D,N} \|f \|_{L^{\infty}(X,\mm)}},
\end{equation}
where $\bar C_{K,D,N}$ grows linearly in $D$ if $K \geq 0$ and exponentially if $K< 0$ and grows with power $1/2$ in $N$.
\end{theorem}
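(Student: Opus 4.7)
The plan is to combine directly the sharp indeterminacy estimate of Theorem \ref{T:multidim} with the heat-flow upper bound on $W_{1}$ of Proposition \ref{P:upperboundW1}. Since both are already established in the excerpt, the proof reduces to checking hypotheses and dividing one inequality by the other; no additional machinery is needed.

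First, I would verify that the hypotheses of both results hold in the $\RCD(K,N)$ setting with $\diam(X)=D<\infty$. An $\RCD(K,N)$ space satisfies $\CD(K,N)$ and is essentially non-branching, so the structural assumption of Theorem \ref{T:multidim} is met. Bishop--Gromov gives $\mm(X)<\infty$; hence $f\in W^{1,2}(X,\sfd,\mm)\subset L^{1}(X,\mm)$, and by Remark \ref{R:zeromean} the eigenfunction has zero mean. The first-moment condition is automatic since for any $x_{0}\in X$,
$$
\int_{X}|f(x)|\,\sfd(x,x_{0})\,\mm(dx) \;\leq\; D\,\|f\|_{L^{1}(X,\mm)}<\infty.
$$
Thus Theorem \ref{T:multidim} applies and yields
$$
W_{1}(f^{+}\mm,f^{-}\mm)\cdot \Per(\{f>0\}) \;\geq\; \frac{\|f\|_{L^{1}(X,\mm)}^{2}}{8C_{K,D}\,\|f\|_{L^{\infty}(X,\mm)}}.
$$

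Since $\RCD(K,N)$ is by definition $\CD(K,N)$ together with infinitesimal Hilbertianity, and $\lambda>2$ by hypothesis, Proposition \ref{P:upperboundW1} gives
$$
W_{1}(f^{+}\mm,f^{-}\mm) \;\leq\; C(K,N,D)\,\sqrt{\frac{\log\lambda}{\lambda}}\,\|f\|_{L^{1}(X,\mm)}.
$$
Dividing the lower bound by this upper bound immediately produces
$$
\Per(\{f>0\}) \;\geq\; \sqrt{\frac{\lambda}{\log\lambda}}\cdot\frac{\|f\|_{L^{1}(X,\mm)}}{8C_{K,D}\,C(K,N,D)\,\|f\|_{L^{\infty}(X,\mm)}},
$$
which is exactly \eqref{E:lowerboundRCD} upon setting $\bar C_{K,D,N}:=8C_{K,D}\,C(K,N,D)$.

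Finally, I would track the dependence on the parameters in this constant. The factor $C_{K,D}$ equals $1$ when $K\geq 0$ and $e^{-KD^{2}/2}$ when $K<0$, so it contributes exponential growth in $D$ precisely in the negative-curvature regime. The factor $C(K,N,D)$ was shown in the proof of Proposition \ref{P:upperboundW1} to grow linearly in $D$ and as $\sqrt{N}$, through the contraction estimate of Theorem \ref{T:contraction} and the trivial bound $W_{1}\leq D\|f_{t}^{+}-f_{t}^{-}\|_{L^{1}}$. Combining, $\bar C_{K,D,N}$ grows linearly in $D$ when $K\geq 0$, exponentially in $D$ when $K<0$, and with exponent $1/2$ in $N$, as claimed. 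There is essentially no technical obstacle here: the work has been packaged into the two prior statements, and the theorem is their quantitative combination.
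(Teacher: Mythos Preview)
Your proof is correct and follows exactly the paper's approach: apply Theorem \ref{T:multidim} (the indeterminacy estimate) together with Proposition \ref{P:upperboundW1} (the heat-flow upper bound on $W_1$), and divide. If anything, you are more careful than the paper in checking the hypotheses (essential non-branching of $\RCD$ spaces, finiteness of $\mm(X)$, the first-moment condition), but the argument is the same.
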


\begin{proof}
Since $\diam(X) < \infty$, 
it follows that $\mm(X) < \infty$  and therefore 
$f \in L^{1}(X,\mm)$, it has zero mean and satisfies the growth conditions and regularity needed to invoke Theorem \ref{T:multidim}. 
Hence Theorem \ref{T:multidim} implies that 
$$
W_{1}(f^+\m,f^-\m)\cdot \Per\left(\{ x\in X\colon f(x)>0\}\right)
\geq \frac{\norm{f}^2_{L^1(X,\mm)}}{8C_{K,D}\|f \|_{L^{\infty}(X,\mm)}},
$$
that together with Proposition \ref{P:upperboundW1} 
implies that 
$$
\Per\left(\{ x\in X\colon f(x)>0\}\right) \geq 
\sqrt{\frac{\lambda}{\log \lambda}}
\frac{\norm{f}_{L^1(X,\mm)}}{C(K,N,D)C_{K,D}\|f \|_{L^{\infty}(X,\mm)}},
$$
giving therefore the claim.
\end{proof}

We are now in position of obtaining 
the explicit lower bound on the size of the nodal 
set of an eigenfunction stated in Theorem \ref{T:main5}.

\begin{proof}[Proof of Theorem \ref{T:main5}] 
It is a straightforward consequence of Theorem \ref{T:nodalRCD} and 
of the following observation: given an eigenfunction $f$ of eigenvalue $\lambda$, there exists a constant $C=C(K,N,D)$ such that 	
	\begin{equation*}
	\|f\|_{L^{\infty}(X,\mm)}\leq C \lambda^{\frac{N}{2}}\|f\|_{L^1(X,\mm)},
	\end{equation*}
	provided 	 $\lambda\geq D^{-2} $. 
	Indeed from \cite[Proposition 7.1]{AmbrHonPort} and assuming $\mm(X) = 1$, one has that 	
	\begin{equation*}
	\|f\|_{L^{\infty}(X,\mm)}\leq C \lambda^{\frac{N}{4}}\|f\|_{L^2(X,\mm)}\leq  C \lambda^{\frac{N}{4}}\|f\|^{\frac{1}{2}}_{L^{\infty}(X,\mm)}\|f\|^{\frac{1}{2}}_{L^1(X,\mm)}, 
	\end{equation*}
from which the claim follows dividing by the $L^{\infty}$ norm and squaring both sides. 
\end{proof}

%-----------------------------------------------------------------
%-----------------------------------------------------------------
\section{Linear combination of eigenfunctions}
\label{Ss:infinitesimalHilbertian}

We now consider functions obtained as linear 
combination of eigenfunctions. As expected, for the following 
results it will be necessary 
to assume the linearity of the Laplacian, 
i.e. infinitesimal Hilbertianity. 

We will however present two different upper bounds for the 
$W_{1}$ distance between the positive and the negative part of the function, one following the lines of Proposition \ref{P:upperboundW1} valid for $\RCD$ spaces
and one following Lemma \ref{L:nodalestimate} valid for 
$\MCP$ spaces.

\begin{proposition}\label{P:nodalcombination}
Let $(X,\sfd,\mm)$ be an essentially non-branching m.m.s. verifying 
$\MCP(K,N)$ with $\diam(X)=D < \infty$; assume moreover $(X,\sfd,\mm)$ to be infinitesimally Hilbertian. 

Let $f$ be a continuous function or, alternatively, $f \in W^{1,2}(X,\sfd,\mm)$, such that it satisfies in $L^2$ sense $f=\sum_{\lambda_k\geq\lambda}a_kf_{\lambda_k}$, $k\in\mathbb{N}$, where each $f_{\lambda_k}$ is an eigenfunction with eigenvalue 
$\lambda_{k}$.

Then the following estimate on the size of the nodal set of $f$ holds true:
$$
\Per(\{x\in X : f(x)>0 \})\geq   
\frac{\sqrt{\lambda} }{\sqrt{\mm(X)}C_{K,N,D}}  \cdot \frac{\|f\|^2_{L^1}}{\|f\|_{L^{2}}\|f\|_{L^{\infty}}},
$$
where $C_{K,N,D}$ is given by Theorem \ref{T:multidimMCP}.
\end{proposition}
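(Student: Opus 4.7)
The plan is to combine the indeterminacy estimate of Theorem~\ref{T:multidimMCP} with an upper bound of the form
\begin{equation*}
W_{1}(f^{+}\mm, f^{-}\mm) \leq \frac{\sqrt{\mm(X)}}{\sqrt{\lambda}}\,\|f\|_{L^{2}(X,\mm)},
\end{equation*}
which plays the role that Lemma~\ref{L:nodalestimate} plays in the proof of Theorem~\ref{T:nodalMCP}. Because $\diam(X) = D < \infty$ forces $\mm(X) < \infty$ and because eigenfunctions for distinct eigenvalues are $L^{2}$-orthogonal by the self-adjointness of $-\bd$ in the infinitesimally Hilbertian setting, $f$ has zero mean (the constant is an eigenfunction of eigenvalue $0$, orthogonal to every $f_{\lambda_{k}}$ with $\lambda_{k} \geq \lambda > 0$), and every integrability hypothesis of Theorem~\ref{T:multidimMCP} is automatic from $\mm(X) < \infty$. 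Once the $W_{1}$-bound above is in hand, substitution into \eqref{E:indeterminacyMCP} immediately produces the claimed perimeter estimate.

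For the $W_{1}$-bound I would invoke an ``inverse Laplacian'' trick. Assume the family $\{f_{\lambda_{k}}\}$ is $L^{2}$-orthonormal (orthonormalising within each eigenspace if needed) and define
\begin{equation*}
g := \sum_{k} \frac{a_{k}}{\lambda_{k}}\, f_{\lambda_{k}},
\end{equation*}
which converges in $L^{2}(X,\mm)$ since $\lambda_{k}\geq \lambda$. By linearity of the Laplacian (equivalent to infinitesimal Hilbertianity), $g$ lies in the domain of $\bd$ with $-\bd g = f$, and Parseval together with the integration by parts formula yields
\begin{equation*}
\int_{X} |\nabla g|_{w}^{2}\, \mm = \int_{X} g f\, \mm = \sum_{k} \frac{a_{k}^{2}}{\lambda_{k}} \leq \frac{1}{\lambda}\,\|f\|_{L^{2}(X,\mm)}^{2}.
\end{equation*}
Next, select a $1$-Lipschitz Kantorovich potential $u$ so that $W_{1}(f^{+}\mm,f^{-}\mm) = \int_{X} fu\, \mm$; since $\mm(X) < \infty$, $u \in W^{1,2}(X,\sfd,\mm)$. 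The bilinear pairing $\langle \nabla u, \nabla g\rangle$ available in the infinitesimally Hilbertian setting gives, via integration by parts and Cauchy--Schwarz together with $|\nabla u|_{w} \leq \Lip(u) = 1$,
\begin{equation*}
\int_{X} fu\, \mm = \int_{X} \langle \nabla u, \nabla g\rangle\, \mm \leq \sqrt{\mm(X)}\,\big\|\,|\nabla g|_{w}\big\|_{L^{2}},
\end{equation*}
so that combining with the previous display produces the desired $W_{1}$-bound.

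The main obstacle is the analytical bookkeeping around the possibly infinite series defining $g$: one must verify, within the infinitesimally Hilbertian $\MCP(K,N)$ framework, both that $g$ genuinely belongs to the domain of $\bd$ with $-\bd g = f$ and that $\int fu\,\mm = \int\langle\nabla u,\nabla g\rangle\,\mm$ is legitimate for a generic $1$-Lipschitz $u \in W^{1,2}$. Both statements reduce to approximating $g$ by truncations $g_{N} := \sum_{k \leq N}(a_{k}/\lambda_{k})\,f_{\lambda_{k}}$: for each $g_{N}$ the identities hold by linearity of $-\bd$ and by the standard integration by parts of \cite[Proposition~4.9]{Gigli12}, and one passes to the limit using $L^{2}$-convergence of $\bd g_{N} \to -f$ together with continuity of the Cheeger bilinear form on $W^{1,2}$. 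Beyond this technical step, the argument is essentially a transcription of the single-eigenfunction proof of Theorem~\ref{T:nodalMCP}.
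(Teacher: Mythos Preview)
Your proposal is correct and follows the same strategy as the paper: bound $W_{1}(f^{+}\mm,f^{-}\mm)$ via a Kantorovich potential and integration by parts using the eigenfunction equation, then substitute into Theorem~\ref{T:multidimMCP}. The only difference is organisational: the paper integrates by parts term-by-term in the expansion $\int fu\,\mm=\sum_{k}a_{k}\int f_{\lambda_{k}}u\,\mm$, whereas you bundle the series into the single auxiliary function $g=\sum_{k}(a_{k}/\lambda_{k})f_{\lambda_{k}}$ with $-\bd g=f$ and integrate by parts once. Your packaging is in fact tidier, since Parseval gives $\|\,|\nabla g|_{w}\|_{L^{2}}^{2}\leq\|f\|_{L^{2}}^{2}/\lambda$ directly and sidesteps the $\ell^{1}$-versus-$\ell^{2}$ slip in the paper's final line, where $\sum_{k}\|a_{k}f_{\lambda_{k}}\|_{L^{2}}$ is identified with $\|f\|_{L^{2}}$.
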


\begin{proof}
From $\diam(X) < \infty$, 
it follows that $\mm(X) < \infty$  and therefore 
$f \in L^{1}(X,\mm)$, it has zero mean and satisfies the growth 
conditions needed to 
apply Theorem \ref{T:multidimMCP}. 
To prove the claim it will be therefore sufficient 
to obtain an upper bound for $W_{1}(f^{+}\,\mm,f^{-}\,\mm)$. 

Using the Kantorovich formulation, there exists a 
$1$-Lipschitz function such that
\begin{align*}
W_{1}(f^+\mm,f^-\mm)& = \int_{X}fu\,\mm 
=\sum_{\lambda_k\geq\lambda}a_k\int_{X}f_{\lambda_k}u\, \mm 
\leq\sum_{\lambda_k\geq\lambda}\frac{a_k}{\lambda_k} \||\nabla f_{\lambda_k}|_{w}\|_{L^2} 
\||\nabla u|\|_{L^2} \\ 
& \leq \sqrt{\mm(X)} \sum_{\lambda_k\geq\lambda}\frac{1}{\sqrt{\lambda_k}}\|{a_k} f_{\lambda_k}\|_{L^2}
\leq \frac{\sqrt{\mm(X)}}{\sqrt{\lambda}}\sum_{\lambda_k\geq\lambda}\|a_k f_{\lambda_k}\|_{L^2}
=\frac{\sqrt{\mm(X)}}{\sqrt{\lambda}}\|f\|_{L^2},
\end{align*}
where we used in the third identity  
$\|\frac{1}{\lambda_k}|\nabla f_{\lambda_k}|_{w}\|^2_{L^2}
=\frac{1}{\lambda_k}\|f_{\lambda_k}\|^2_{L^2}$, 
and in the last one the orthogonality of $\lbrace f_{\lambda_k}\rbrace_{k \in \N}$ given by infinitesimally Hilbertianity.
\end{proof}

\begin{lemma}\label{L:upperboundlinearcombination}
Let $(X,\sfd,\mm)$ be a m.m.s. verifying $\RCD(K,N)$ 
and such that $\diam(X) = D < \infty$ and $K \geq 0$.
Let $f:X\to \Real$ be a continuous or, alternatively, $f \in W^{1,2}(X,\sfd,\mm)$, such that
$$
f=\sum_{\lambda_k\geq \lambda}\langle f,f_{\lambda_k}\rangle f_{\lambda_{k}}, \qquad \{\lambda_{k}\}_{k\in \N}, 
$$
where $\lbrace f_{\lambda_k}\rbrace_{k\in \N}$ are 
 eigenfunctions of the Laplacian 
of unitary $L^2$-norm with eigenvalue $\lambda_{k}$, 
$\langle f,f_{\lambda_k}\rangle$ is the scalar product of 
$L^{2}(X,\mm)$, $\langle f_{\lambda_{j}},f_{\lambda_k}\rangle = \delta_{j,k}$ and 
convergence of the series is in $L^{2}(X,\mm)$.
Then
$$
W_1(f^+\mm,f^-\mm)\leq C(K,N,D,\mm(X)) \left(\frac{1}{\lambda}\log\left({\lambda} \frac{\|f\|_{L^2}}{\|f\|_{L^{1}}}\right)\right)^{\frac{1}{2}}\|f\|_{L^1},
$$
with $C(K,N,D,\mm(X))$ an explicit constant, provided that $\lambda\geq 2\sqrt{\mm(X)}$.
\end{lemma}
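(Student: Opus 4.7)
The strategy mimics that of Proposition \ref{P:upperboundW1}, with the key change being that a linear combination of eigenfunctions admits only an $L^2$ (rather than $L^\infty$) spectral decay under the heat semigroup, which is what produces the ratio $\|f\|_{L^2}/\|f\|_{L^1}$ inside the logarithm. Set $\mu_0^\pm := f^\pm \mm$ and $\mu_t^\pm := H_t\mu_0^\pm$; note that $\mu_0^+(X)=\mu_0^-(X)$ since $f$ has zero mean (the eigenfunction zero-mean property from Remark \ref{R:zeromean} extends to linear combinations by linearity of the $L^2$ inner product against the constant $1$), and this common mass is preserved by $H_t$. Then the triangle inequality gives
$$W_1(\mu_0^+,\mu_0^-) \leq W_1(\mu_0^+,\mu_t^+) + W_1(\mu_t^+,\mu_t^-) + W_1(\mu_t^-,\mu_0^-).$$

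The two outer terms I would handle exactly as in the proof of Proposition \ref{P:upperboundW1}. Applying Theorem \ref{T:contraction} with $s=0$ to the normalized probability measures $\mu_0^\pm/\|f^\pm\|_{L^1}$ and $\mu_t^\pm/\|f^\pm\|_{L^1}$ gives $W_2\leq C(t,K,N)\sqrt{t}$, and since $K\geq 0$ the factor $C(t,K,N)$ is controlled by $\sqrt{2N}$ independently of $t$. Combining with $W_1\leq W_2$ and undoing the normalization:
$$W_1(\mu_t^\pm,\mu_0^\pm)\leq \sqrt{2N}\,\sqrt{t}\,\|f^\pm\|_{L^1(X,\mm)}.$$

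For the middle term I invoke Lemma \ref{L:cost}, using in a crucial way the linearity of $H_t$ provided by $\RCD$ (i.e.\ infinitesimal Hilbertianity):
$$W_1(\mu_t^+,\mu_t^-)\leq D\,\|H_tf^+ - H_tf^-\|_{L^1(X,\mm)}= D\,\|H_tf\|_{L^1(X,\mm)}.$$
Here the spectral expansion of $f$, together with $H_tf_{\lambda_k}=e^{-\lambda_k t}f_{\lambda_k}$ (recalled at the end of Section \ref{Ss:Laplacian}) and orthonormality, gives
$$\|H_tf\|_{L^2(X,\mm)}^2 = \sum_{\lambda_k\geq \lambda}e^{-2\lambda_k t}\,|\langle f,f_{\lambda_k}\rangle|^2 \leq e^{-2\lambda t}\,\|f\|_{L^2(X,\mm)}^2,$$
and Cauchy--Schwarz (exploiting $\mm(X)<\infty$) then upgrades this to
$$\|H_tf\|_{L^1(X,\mm)}\leq \sqrt{\mm(X)}\,e^{-\lambda t}\,\|f\|_{L^2(X,\mm)}.$$

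Putting the three bounds together yields $W_1(\mu_0^+,\mu_0^-)\leq \sqrt{2N}\sqrt{t}\|f\|_{L^1}+D\sqrt{\mm(X)}\,e^{-\lambda t}\|f\|_{L^2}$. The natural optimization is $t:=\lambda^{-1}\log\!\bigl(\lambda\|f\|_{L^2}/\|f\|_{L^1}\bigr)$; the standing hypothesis $\lambda\geq 2\sqrt{\mm(X)}$, together with the Cauchy--Schwarz bound $\|f\|_{L^1}\leq \sqrt{\mm(X)}\|f\|_{L^2}$, forces the argument of the logarithm to be at least $2$, so $t>0$. With this choice the exponential factor becomes $\|f\|_{L^1}/(\lambda\|f\|_{L^2})$, and the middle term reduces to $D\sqrt{\mm(X)}\|f\|_{L^1}/\lambda$. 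The main (rather mild) obstacle is the bookkeeping: one needs to verify that this residual $1/\lambda$ contribution is dominated by the $\sqrt{t}$ contribution from the endpoint terms, which again follows from the threshold $\lambda\geq 2\sqrt{\mm(X)}$ after absorbing $D,\mm(X),K,N$ into a single constant $C(K,N,D,\mm(X))$, yielding the claimed inequality.
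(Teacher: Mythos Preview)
Your proposal is correct and follows essentially the same approach as the paper: the triangle-inequality splitting via the heat flow, the contraction estimate of Theorem \ref{T:contraction} for the endpoint terms (with $C(t,K,N)\leq\sqrt{2N}$ since $K\geq 0$), Lemma \ref{L:cost} combined with the spectral $L^2$-decay $\|H_t f\|_{L^2}\leq e^{-\lambda t}\|f\|_{L^2}$ and Cauchy--Schwarz for the middle term, and finally the same choice $t=\lambda^{-1}\log(\lambda\|f\|_{L^2}/\|f\|_{L^1})$. Your additional justification that this $t$ is positive under the hypothesis $\lambda\geq 2\sqrt{\mm(X)}$ is a detail the paper leaves implicit.
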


\begin{proof}
Following the approach and the same notation of the proof of Proposition \ref{P:upperboundW1} we have
$$
W_{1}(\mu_{0}^{+},\mu_{0}^{-})\leq W_{1}(\mu_{0}^{+},\mu_t^{+})+W_1(\mu_t^{+},\mu_t^{-})+W_1(\mu_t^{-},\mu_{0}^{-}),
$$
and deduce from Theorem \ref{T:contraction} that  
$$
W_{1}(\mu_{t}^{\pm},\mu_{0}^{\pm}) 
 \leq \sqrt{t}\|f \|_{L^{1}(X,\mm)}  C(t,K,N),
$$
where $C(t,K,N) : =\left(2N \frac{1 - e^{-K2t/3}}{K2t/3} \right)^{1/2}$.
Then to bound $W_1(\mu_t^{+},\mu_t^{-})$, again using Lemma \ref{L:cost}, 
by orthonormality of $\{ f_{\lambda_{k}}\}_k$ it follows that
\begin{align}\label{E:cs}
\|f_{t}\|^2_{L^1(X,\mm)}
=&~\left\|\sum_{\lambda_k\geq \lambda}e^{-\lambda_k t}\langle f,f_{\lambda_k}\rangle f_{\lambda_k}\right\|^2_{L^1(X,\mm)} 
\leq  \mm(X) \left\|\sum_{\lambda_k\geq \lambda}e^{-\lambda_k t}\langle f,f_{\lambda_k}\rangle f_{\lambda_k}\right\|_{L^2(X,\mm)}^2  \nonumber\\
=&~ \mm(X) \sum_{\lambda_k\geq \lambda}e^{-2\lambda_k t}|\langle f,f_{\lambda_k}\rangle |^2
\leq \mm(X) e^{-2\lambda t}\|f\|^2_{L^2(X,\mm)}.
\end{align}
So finally 
$$
W_{1}(\mu_{0}^{+},\mu_{0}^{-})\leq 
\sqrt{t}\|f \|_{L^{1}(X,\mm)}  C(t,K,N) +
D\sqrt{\mm(X)} e^{-\lambda t}\|f\|_{L^2(X,\mm)}.
$$
Using that $K \geq 0$ (so $C(t,K,N)\leq \sqrt{2N}$) and choosing 
$t=\frac{1}{\lambda}\log\left(\frac{\lambda\|f\|_{L^2(X,\mm)}}{\|f\|_{L^1(X,\mm)}}\right)$, 
it holds
\begin{equation*}
W_{1}(\mu_{0}^{+},\mu_{0}^{-})\leq C(K,N,D, \mm(X)) \left(\frac{1}{\lambda}\log\left(\lambda\frac{\|f\|_{L^2}}{\|f\|_{L^{1}}}\right)\right)^{\frac{1}{2}}\|f\|_{L^1},
\end{equation*}
proving the claim.
\end{proof}

The following result is then a straightforward consequence

\begin{corollary}
Let $(X,\sfd,\mm)$ be a m.m.s. verifying $\RCD(K,N)$ 
and such that $\diam(X) = D < \infty$. 

Let $f:X\to \Real$ be a continuous or, alternatively, $f \in W^{1,2}(X,\sfd,\mm)$ such that
$$
f=\sum_{\lambda_k\geq \lambda}\langle f,f_{\lambda_k}\rangle f_{\lambda_{k}}, \qquad \{\lambda_{k}\}_{k\in \N}, \ \lambda >0,
$$
where $\lbrace f_{\lambda_k}\rbrace_{k\in \N}$ are 
 eigenfunctions of the Laplacian 
of unitary $L^2$-norm with eigenvalue $\lambda_{k}$, 
$\langle f,f_{\lambda_k}\rangle$ is the scalar product of 
$L^{2}(X,\mm)$, $\langle f_{\lambda_{j}},f_{\lambda_k}\rangle = \delta_{j,k}$ and 
convergence of the series is in $L^{2}(X,\mm)$.

Then the following estimate on the size of the nodal set of $f$ holds true:
$$
\Per(\{x\in X : f(x)>0 \})\geq   
\frac{\sqrt{\lambda} }{C(K,N,D,\mm(X))} 
\log\left(\lambda \frac{\|f\|_{L^2}}{\|f\|_{L^{1}}}\right)^{-1/2} \cdot 
\frac{\|f\|_{L^1}}{\|f\|_{L^{\infty}}},
$$
with $C(K,N,D,\mm(X))$ the same constant of Lemma \ref{L:upperboundlinearcombination}, provided that {$\lambda\geq 2\sqrt{\mm(X)}$}.
\end{corollary}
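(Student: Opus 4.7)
The plan is to combine directly the sharp indeterminacy estimate of Theorem \ref{T:multidim} with the upper bound on $W_{1}(f^{+}\mm,f^{-}\mm)$ proved in Lemma \ref{L:upperboundlinearcombination}: both tools are at disposal and the argument reduces to dividing one inequality by the other.

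First, I would check that $f$ meets the hypotheses of Theorem \ref{T:multidim}. Since $\diam(X)=D<\infty$ and $(X,\sfd,\mm)$ is $\RCD(K,N)$, the measure $\mm$ is finite, so $f\in L^{1}(X,\mm)$ and the moment assumption $\int_{X}|f(x)|\sfd(x,x_{0})\,\mm(dx)<\infty$ is automatic. Because each eigenfunction $f_{\lambda_{k}}$ with $\lambda_{k}\geq\lambda>0$ has zero mean by Remark \ref{R:zeromean}, and the series converges in $L^{2}(X,\mm)$ (hence, using $\mm(X)<\infty$ and Cauchy--Schwarz, in $L^{1}$), the limit $f$ also satisfies $\int_{X}f\,\mm=0$. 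The regularity assumption (continuous or $W^{1,2}$) is inherited from the hypothesis on $f$.

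Applying Theorem \ref{T:multidim} then yields
$$
W_{1}(f^{+}\mm,f^{-}\mm)\cdot \Per(\{x\in X:f(x)>0\})\geq \frac{\|f\|_{L^{1}}^{2}}{8\,C_{K,D}\,\|f\|_{L^{\infty}}},
$$
while Lemma \ref{L:upperboundlinearcombination} supplies
$$
W_{1}(f^{+}\mm,f^{-}\mm)\leq C(K,N,D,\mm(X))\left(\frac{1}{\lambda}\log\!\left(\lambda\frac{\|f\|_{L^{2}}}{\|f\|_{L^{1}}}\right)\right)^{1/2}\|f\|_{L^{1}},
$$
valid under $\lambda\geq 2\sqrt{\mm(X)}$. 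Dividing the first inequality by the right-hand side of the second, one factor of $\|f\|_{L^{1}}$ cancels and the two constants $8C_{K,D}$ and $C(K,N,D,\mm(X))$ get absorbed into a single constant of the same name, giving exactly the claimed bound
$$
\Per(\{x\in X:f(x)>0\})\geq \frac{\sqrt{\lambda}}{C(K,N,D,\mm(X))}\log\!\left(\lambda\frac{\|f\|_{L^{2}}}{\|f\|_{L^{1}}}\right)^{-1/2}\cdot\frac{\|f\|_{L^{1}}}{\|f\|_{L^{\infty}}}.
$$

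There is essentially no obstacle: the only mild point is the sign of $K$. Lemma \ref{L:upperboundlinearcombination} is formulated for $K\geq 0$ (so that $C(t,K,N)\leq\sqrt{2N}$ from Theorem \ref{T:contraction}); for $K<0$ the same bound is available up to an extra factor depending only on $K$ and $D$, which can be absorbed into the constant $C(K,N,D,\mm(X))$ without altering the structure of the argument. Hence the corollary follows immediately by assembling the two previous results.
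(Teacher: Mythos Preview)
Your proposal is correct and matches the paper's approach exactly: the paper presents this corollary with no proof beyond the phrase ``straightforward consequence,'' meaning precisely the combination of Theorem~\ref{T:multidim} with Lemma~\ref{L:upperboundlinearcombination} that you spell out. Your verification of the zero-mean and moment hypotheses, and your remark on the $K<0$ case (Lemma~\ref{L:upperboundlinearcombination} is stated only for $K\geq 0$, while the corollary drops this restriction), are careful observations that the paper leaves implicit.
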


%----------------------------------------------------------------------------------------
%----------------------------------------------------------------------------------------
%----------------------------------------------------------------------------------------

\end{document}